\documentclass{amsart}

\usepackage[T1]{fontenc}
\usepackage{amsmath}
\usepackage{amssymb}
\usepackage{amsthm}
\usepackage{mathtools}
\usepackage[a4paper,margin=1in]{geometry}
\usepackage{hyperref}

\hypersetup{
  colorlinks=true,
  linkcolor=blue,
  citecolor=blue,
  urlcolor=blue
}
\allowdisplaybreaks
\theoremstyle{plain}

\newtheorem{lemma}{Lemma}
\newtheorem{theorem}{Theorem}
\newtheorem{corollary}{Corollary}
\newtheorem{proposition}{Proposition}

\theoremstyle{definition}
\newtheorem{definition}{Definition}
\theoremstyle{remark}
\newtheorem{remark}{Remark}

\newcommand{\1}{\mathbf 1}
\newcommand{\wt}{\widetilde}
\newcommand{\R}{\mathbb R}
\newcommand{\KL}{\operatorname{D}}

\newcommand{\E}{\mathbb E}

\newcommand{\Pbb}{\mathbb P}
\newcommand{\cN}{\mathcal N}

\newcommand{\Var}{\operatorname{Var}}

\DeclareMathOperator{\Ichi}{I_2}

\title{Gaussian Convexity Principles for Sharp Moderate Deviations of Gaussian Maxima and Critical SK Free-Energy Variance}
\author{Yiming Chen}
\address{School of Mathematical Sciences, Peking University}
\email{ymchenmath@math.pku.edu.cn}
\subjclass[2020]{Primary 60G15, 82B44; Secondary 60F10, 60K35, 94A17}
\keywords{Gaussian fields, moderate deviations, Gaussian maxima, Sherrington--Kirkpatrick model, free energy, entropy, information percolation}

\begin{document}

\begin{abstract}
In this paper, we establish a moderate deviation bound for Gaussian maxima and the variance asymptotics of the Sherrington–Kirkpatrick free energy at criticality based on Gaussian convexity.  First, let $(X_1,\ldots,X_N)$ be centered Gaussian vector with $\operatorname{Var}(X_i)\leq 1$. Suppose that, for fixed $\alpha\in(0,\sqrt 2)$ and $\kappa>0$, $\E\max_iX_i\geq\alpha\sqrt{\log N}$ and $\E\max_iX_i+\kappa\sqrt{\log N}\leq\sqrt{2\log N}$. We prove that
\[
\Pbb\left(\max_iX_i\geq \E\max_iX_i+\kappa\sqrt{\log N}\right)
\leq N^{-\kappa^2/(2-\alpha^2)+o(1)}.
\]
This answers a question of Ding, Eldan and Zhai \cite{DEZ2015}. The exponent is sharp, as witnessed by an equicorrelated Gaussian field. 

Second, for the Sherrington--Kirkpatrick model at the critical inverse temperature $\beta_c=1/\sqrt2$, we prove
\[
\Var\bigl(F_N(\beta_c)\bigr)=\frac16\log N+O(1).
\]

Our argument provides the variance asymptotics at the critical temperature from an entropy perspective, via a route distinct from that of Du and Huang~\cite{DuHuang2026}. For the upper bound, we express the variance as an entropy under exponential tilting and identify this entropy with the Kullback--Leibler divergence of a Gaussian synchronization model. Its derivative is then bounded using the I-MMSE formula, information percolation, and estimates for the susceptibility of the critical Erd\H{o}s--R'enyi random graph. For the lower bound, we combine Gaussian convexity applied at the replica parameter with an estimate for inverse moments on the sphere and an identity relating GOE eigenvalue densities in consecutive dimensions.

\end{abstract} 


\maketitle

\section{Introduction}\label{sec:introduction}
Convex functionals of Gaussian fields form a natural meeting point of Gaussian analysis, convex geometry, and disordered probability. For instance, Ehrhard’s inequality provides a fundamental tool for deriving lower tail bound and small-ball estimates for Gaussian processes and convex bodies \cite{Ehr1983,Borell2003,PaourisValettas2018}. Convexity also plays an important role in statistical-mechanical models, notably in the study of the free energies of spin glasses and directed polymers \cite{Panchenko2014,CarmonaHu2002, Chen2024GaussianConvexity}. 


In this paper, we use convexity principles for Gaussian field to obtain results on Gaussian maxima and critical SK free energy. Firstly, using the concavity of the Gaussian quantiles of convex sublevel sets implied by Ehrhard’s inequality, we derive a moderate deviation exponent in terms of the expected maximum. For the Sherrington--Kirkpatrick model, we give a new proof of the critical variance asymptotics based on the convexity of normalized logarithmic moment generating functions, which connects the variance to an entropy quantity and to moments at different replica parameters.

\textbf{Gaussian maxima.}
Let $\max_{1\leq i\leq N}X_i$ be centered Gaussian vector satisfying $\max_i\Var(X_i)\leq1$. The Borell--Sudakov--Tsirelson inequality gives
\begin{equation}\label{eq:BST}
\Pbb\bigl(\max_{1\leq i\leq N}X_i\geq \E \max_{1\leq i\leq N}X_i+t\bigr)\leq e^{-t^2/2},\qquad t\geq0.
\end{equation}
At the moderate deviation scale $t=\kappa\sqrt{\log N}$, \eqref{eq:BST} yields the exponent $\kappa^2/2$, independently of the correlations. Ding, Eldan and Zhai \cite{DEZ2015} showed that the exponent can be improved whenever $\E \max_{1\leq i\leq N}X_i$ has order $\sqrt{\log N}$, and asked for the optimal dependence on the size of the expected maximum.

The extremal example is an equicorrelated field. Let $G_0,G_1,\ldots,G_N$ be independent standard Gaussian variables and set
\begin{equation}\label{eq:equi}
X_i=\sqrt{1-\frac{\alpha^2}{2}}\,G_0+\frac{\alpha}{\sqrt2}G_i,
\qquad 1\leq i\leq N.
\end{equation}
Then $\E \max_{1\leq i\leq N}X_i=\alpha\sqrt{\log N}+o(\sqrt{\log N})$, while the common Gaussian component produces
\begin{equation}\label{eq:sharp_moderate_deviation}
\Pbb\left(\max_{1\leq i\leq N}X_i\geq \E \max_{1\leq i\leq N}X_i+\kappa\sqrt{\log N}\right)
=N^{-\kappa^2/(2-\alpha^2)+o(1)}.
\end{equation}
Our first result proves that the exponent in \eqref{eq:sharp_moderate_deviation} is universal under the natural condition appearing in the question of Ding, Eldan and Zhai.

\begin{theorem}
\label{thm:main}
Fix $\alpha\in(0,\sqrt2)$ and $\kappa>0$.
For each $N\ge2$, let $(X_1,\dots,X_N)$ be a centered Gaussian vector satisfying $\operatorname{Var}(X_i)\le 1$.
Assume that, for all sufficiently large $N$, $\E \max_{1\le i\le N}X_i \ge \alpha\sqrt{\log N},$ and $\E \max_{1\le i\le N}X_i + \kappa\sqrt{\log N} \le \sqrt{2\log N}.$ Then, as $N\to\infty$,
\begin{equation}\label{equ-main-1}
    \mathbb P\!\left(\max_{1\le i\le N}X_i\ge \E \max_{1\le i\le N} X_i+\kappa\sqrt{\log N}\right)
\le
N^{-\frac{\kappa^2}{2-\alpha^2}+o(1)}.
\end{equation}

\end{theorem}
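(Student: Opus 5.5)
The plan is to apply Ehrhard's inequality to the convex sublevel sets of the maximum, and then interpolate between two points: the median, and a level far in the upper tail controlled by the union bound. Write $M=\max_{1\le i\le N}X_i$ and realize $X=AZ$ with $Z$ a standard Gaussian vector, so that $K_t=\{z:\max_i (Az)_i\le t\}$ is a convex set. Since $\lambda K_s+(1-\lambda)K_t\subset K_{\lambda s+(1-\lambda)t}$, Ehrhard's inequality shows that
\[
f(t)=\Phi^{-1}\bigl(\Pbb(M\le t)\bigr)
\]
is concave in $t$, where $\Phi$ is the standard Gaussian distribution function. The target bound is then equivalent to the lower bound $f(t)\ge \sqrt{2\log N}\,\kappa/\sqrt{2-\alpha^2}\,(1+o(1))$ at $t=\E M+\kappa\sqrt{\log N}$. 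This uses the tail asymptotics $1-\Phi(x)=e^{-x^2(1+o(1))/2}$.

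\textbf{Two anchor points.} The lower anchor is the median $m$ of $M$, where $f(m)=0$. By Gaussian concentration (the Borell--Sudakov--Tsirelson inequality \eqref{eq:BST}), $|m-\E M|\le C$ for an absolute constant $C$. Hence $m\ge \alpha\sqrt{\log N}-C$, and $t-m=\kappa\sqrt{\log N}+O(1)$. For the upper anchor, fix $a>1$ and set $s=a\sqrt{2\log N}$. The union bound and $\Var(X_i)\le1$ give $\Pbb(M>s)\le N(1-\Phi(s))\le N^{1-a^2}$. Therefore
\[
f(s)\ge\sqrt{2(a^2-1)\log N}\,(1+o(1)).
\]
The hypothesis $\E M+\kappa\sqrt{\log N}\le\sqrt{2\log N}$, together with $a>1$, guarantees $m\le t\le s$ for large $N$. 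Concavity then places $f(t)$ above the chord joining $(m,0)$ and $(s,f(s))$:
\[
f(t)\ge \frac{t-m}{s-m}\,f(s).
\]
The right-hand side is increasing in $m$ when $t-m$ is held fixed, since $s-m$ decreases. So we may replace $m$ by its worst case $\alpha\sqrt{\log N}$ at the cost of $o(\sqrt{\log N})$ errors. This yields
\[
\frac{f(t)^2}{2\log N}\ \ge\ \kappa^2\,\frac{a^2-1}{(a\sqrt2-\alpha)^2}\,(1+o(1)).
\]

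\textbf{Optimizing over $a$.} Set $g(a)=(a^2-1)/(a\sqrt2-\alpha)^2$. Setting $g'(a)=0$ reduces to $2\sqrt2-2\alpha a=0$, so $a=\sqrt2/\alpha$, and this exceeds $1$ because $\alpha<\sqrt2$. At this point $a^2-1=(2-\alpha^2)/\alpha^2$ and $a\sqrt2-\alpha=(2-\alpha^2)/\alpha$, hence $g=1/(2-\alpha^2)$. Therefore
\[
\Pbb(M\ge t)=1-\Phi(f(t))\le N^{-\kappa^2/(2-\alpha^2)+o(1)},
\]
which is the claimed bound \eqref{equ-main-1}. This also matches the equicorrelated example \eqref{eq:equi}.

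\textbf{Main obstacle.} I expect the delicate step to be the rigorous Ehrhard concavity statement, including its behaviour where $f$ may take the values $\pm\infty$. Alongside it comes the bookkeeping of the $O(1)$ and $o(\sqrt{\log N})$ errors, which must be uniform in the correlation structure. Here the only inputs are $\Var(X_i)\le1$ and the two hypotheses on $\E M$, so uniformity should go through. The inequality $f(s)\ge\Phi^{-1}(1-N^{1-a^2})$ needs only monotonicity of $\Phi^{-1}$.
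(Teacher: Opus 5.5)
Your proposal is correct and follows the paper's proof essentially step for step. Both arguments use Ehrhard concavity of $\Phi^{-1}(\Pbb(M\le t))$, a chord from the median to a union-bound anchor above $\sqrt{2\log N}$, and then optimize over the anchor: your $s=a\sqrt{2\log N}$ is the paper's $t\sqrt{\log N}$ with $t=a\sqrt2$, so your optimum $a=\sqrt2/\alpha$ is the paper's $t_*=2/\alpha$. The only difference is cosmetic: you invert the Gaussian tail asymptotically via $\bar\Phi^{-1}(u)\sim\sqrt{2\log(1/u)}$, whereas the paper uses explicit two-sided Mills ratio bounds to obtain $\Phi^{-1}\bigl(\Pbb(M\le t\sqrt{\log N})\bigr)\ge\sqrt{(t^2-2)\log N-2\log\log N}$.
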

Theorem~\ref{thm:main} is sharp in view of \eqref{eq:equi}--\eqref{eq:sharp_moderate_deviation}. The proof is based on Ehrhard's concavity theorem for the Gaussian quantile of the sublevel sets of \(\max_{1\leq i\leq N}X_i\). Combined with a union bound and two sided Mills ratio bound yield the exponent in \eqref{equ-main-1}.

\begin{remark}
Our use of Ehrhard's inequality is related to the argument of Paouris and Valettas \cite{PaourisValettas2018} for lower deviations of convex Gaussian functionals. Their proof bounds the transformed distribution function by its tangent line at a median, whereas our argument uses a chord joining the median to a remote deterministic level in order to derive the upper moderate deviation exponent.
\end{remark}


\textbf{Critical SK free energy.}
The Sherrington--Kirkpatrick model, introduced in \cite{SK1975}, is a basic mean-field model of spin glasses; see \cite{MPV1987,Pan2013,Tal2003,Tal2011spinI,Tal2011spinII}. For $\Sigma_N=\{-1,+1\}^N$, let $g=(g_{ij})_{1\leq i,j\leq N}$ be independent standard Gaussian variables and define
\[
H_N(\sigma;g)=\frac1{\sqrt N}\sum_{i,j=1}^Ng_{ij}\sigma_i\sigma_j,
\qquad \sigma\in\Sigma_N.
\]
Its covariance is
\[
\E\bigl[H_N(\sigma;g)H_N(\rho;g)\bigr]
=N R(\sigma,\rho)^2,
\]
where $R(\sigma,\rho)=\frac1N\sum_{i=1}^N\sigma_i\rho_i.$ For $\beta>0$, write $Z_N(\beta;g)=\sum_{\sigma\in\Sigma_N}e^{\beta H_N(\sigma;g)},$ $F_N(\beta;g)=\log Z_N(\beta;g),$ when the disorder is understood, we write $Z_N(\beta)$ and $F_N(\beta)$. All expectations and variances below are with respect to the disorder unless otherwise specified.

The critical inverse temperature is $\beta_c=1/\sqrt2$. For fixed $\beta<\beta_c$, the limiting variance diverges as $\beta\uparrow\beta_c$ like
\begin{equation}
\label{eq:variance_asymptotic}
-\frac{1}{2}\log\left(1-\frac{\beta^2}{\beta_c^2}\right)+O(1).
\end{equation}

The critical window has width $N^{-1/3}$ in $\beta_c^2-\beta^2$, so \eqref{eq:variance_asymptotic} predicts a cutoff of size $\frac16\log N$. Thus, the physics literature \cite{Asp2008,PR2009} predict
\begin{equation}\label{pre-ask}
\Var\bigl(F_N(\beta_c)\bigr)=\frac16\log N+O(1).
\end{equation}

Talagrand first obtained an upper bound of order $\sqrt N$ at criticality, which was later substantially sharpened by Chen and Lam as follows. 

\begin{theorem}[\cite{CL2019}]
  There exist a constant $C>0$ such that
   \begin{equation}
\operatorname{Var}(F_N(\beta_c))\le C(\log N)^2.
\label{eq:var_bound}
\end{equation}
\end{theorem}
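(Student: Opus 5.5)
The plan is to bound the variance through a Gaussian covariance interpolation in the disorder, and to reduce everything to a bound on the \emph{cross-overlap} of two Gibbs measures built from correlated disorders. Let $g'$ be an independent copy of $g$ and set $g^t=tg+\sqrt{1-t^2}\,g'$ for $t\in[0,1]$. The standard Gaussian covariance identity (heat semigroup along the Ornstein--Uhlenbeck flow) gives
\[
\Var\bigl(F_N(\beta_c)\bigr)=\int_0^1 \E\bigl[\nabla F_N(\beta_c;g)\cdot\nabla F_N(\beta_c;g^t)\bigr]\,dt .
\]
Since $\partial_{g_{ij}}F_N=\beta N^{-1/2}\langle\sigma_i\sigma_j\rangle$, the integrand equals $\beta_c^2 N\,\E\langle R(\sigma,\rho)^2\rangle_t$. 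Here $\sigma$ is drawn from the Gibbs measure with disorder $g$, $\rho$ from the one with disorder $g^t$, and $\langle\cdot\rangle_t$ denotes the product measure. Hence
\[
\Var\bigl(F_N(\beta_c)\bigr)=\frac N2\int_0^1\E\langle R(\sigma,\rho)^2\rangle_t\,dt ,
\]
and it suffices to control the cross-overlap uniformly in the correlation parameter $t$.

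Heuristically, the coupled two-replica system behaves like a system at effective inverse temperature $\beta_c\sqrt t$ for the cross term. So one expects $N\,\E\langle R^2\rangle_t\lesssim (1-t)^{-1}$ as long as $1-t\gg N^{-1/3}$. I would split the integral at $t_0=1-N^{-1/3}$.

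On $[t_0,1]$ I only use the critical bound $\E\langle R^2\rangle\le CN^{-2/3}$, possibly with a logarithmic loss. This is first proved for the single-disorder two-replica overlap, e.g.\ by Latała's quadratic replica-coupling argument, bounding $\E\langle\exp(\lambda N R^2)\rangle$ at a reduced temperature and transferring to $\beta_c$. I then pass to $t<1$ by the standard fact that $t\mapsto\E\langle R^2\rangle_t$ is nondecreasing: its derivative in $t$ is a sum of nonnegative squared quantities, via the Fourier/Hermite expansion of the overlap. This piece contributes $O(N\cdot N^{-1/3}\cdot N^{-2/3}\cdot\mathrm{polylog})$.

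On $[0,t_0]$ I aim to prove
\[
N\,\E\langle R(\sigma,\rho)^2\rangle_t\le \frac{C\log N}{1-t}.
\]
Integrating then gives $C\log N\cdot\log(N^{1/3})=O((\log N)^2)$, which is exactly the Chen--Lam bound. The constant-free version $C/(1-t)$ would even give $O(\log N)$.

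The main obstacle is this cross-overlap estimate for the correlated pair at criticality. My first attempt would be a Guerra--Talagrand type interpolation for the coupled system, adding a term $\lambda N R(\sigma,\rho)^2$ to the joint Hamiltonian of $(\sigma,\rho)$. I would compare the coupled free energy with its annealed value: the second-moment computation of the pair partition function produces the Gaussian factor $\bigl(1-2\beta_c^2 t-2\lambda/N\cdot N\bigr)^{-1/2}$ in the overlap integral. This shows that $\E\langle\exp(\lambda N R^2)\rangle_t$ stays bounded for $\lambda<c(1-t)$. The $\log N$ loss would come from the concentration needed to pass from annealed to quenched quantities, where I would use Gaussian concentration of $F_N$ at scale $\sqrt{\log N}$ times the a priori bound.
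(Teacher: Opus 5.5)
The paper does not prove this statement; it is quoted from \cite{CL2019}, and Section~\ref{subsec:critical-upper-bound} proves the stronger bound $\frac16\log N+O(1)$.

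\textbf{What is correct.} Your reduction is sound. The Ornstein--Uhlenbeck covariance identity, the formula $\Var(F_N(\beta_c))=\frac N2\int_0^1\E\langle R(\sigma,\rho)^2\rangle_t\,dt$, and the monotonicity of $t\mapsto\E\langle R^2\rangle_t$ via the Hermite expansion are all fine. The heuristic $N\E\langle R^2\rangle_t\approx(1-t)^{-1}$ even predicts the constant $\frac16$.

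\textbf{The gap.} Both analytic inputs are asserted rather than proved, and together they are the whole difficulty.

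On $[t_0,1]$ you need $\E\langle R^2\rangle\le N^{-2/3}$, up to a logarithmic factor, at $\beta_c$ itself. Latała's argument does not give this. It controls $\E\langle e^{\lambda NR^2}\rangle$ only for $\beta<\beta_c$, with bounds of the form of the annealed factor $(1-2\beta^2-2\lambda)^{-1/2}$, which is vacuous at $2\beta_c^2=1$ for every $\lambda>0$. Nor is there any monotonicity of $\beta\mapsto\E\langle R^2\rangle$ that would carry a bound from $\beta<\beta_c$ up to $\beta_c$.

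The natural elementary transfer uses three facts: convexity of $\beta\mapsto\E F_N(\beta)$, the identity $\frac{d}{d\beta}\E F_N=\beta N(1-\E\langle R^2\rangle)$, and $\log\E Z_N-\E F_N=O(\log N)$. This only yields $\E\langle R^2\rangle\lesssim\sqrt{\log N/N}$ at $\beta_c$. With that bound the piece $[t_0,1]$ contributes of order $N^{1/6}$. The sharp critical overlap bound is the recent result of \cite{DuHuang2026}, obtained with substantial random-matrix and cavity machinery; it is not a byproduct of Latała's method.

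On $[0,t_0]$ your annealed computation is right. However, $\E\langle\cdot\rangle_t$ is a quenched average with the random denominator $Z_N(\beta_c;g)Z_N(\beta_c;g^t)$. Replacing that denominator by $(\E Z_N)^2$ requires controlling the lower tail of $\log Z_N$ at scale $O(\log N)$, for instance polynomial bounds on $\E(\E Z_N/Z_N)^q$. That is exactly the superconcentration the theorem asserts.

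Gaussian concentration cannot supply it. We have $|\nabla_gF_N(\beta_c)|^2=\frac N2\langle R^2\rangle$, whose mean is of order $N^{1/3}$ at criticality. So Lipschitz or Poincaré arguments only see fluctuations of order $N^{1/6}$, and assuming concentration at scale $\sqrt{\log N}$ assumes more than $\Var=O((\log N)^2)$.

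A minor point: the annealed factor $(1-t-2\lambda)^{-1/2}$ is not bounded uniformly in $t$ for $\lambda\asymp 1-t$. This only costs a factor $\log\frac1{1-t}$, which is harmless for $(\log N)^2$.

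\textbf{What the paper's route supplies.} The missing ingredient is a device that turns the quenched quantity into a planted one, and the paper's argument provides exactly this. Convexity of $s\mapsto s^{-1}\log\E e^{sF_N}$ gives $\frac12\Var(F_N(\beta))\le\mathcal K_{N,\beta}'(1)=B_N(\beta)$. This is a size-biased entropy with no random normalization.

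The secant inequality $\wt B_N(\lambda)\le\frac12\log\bigl(\E\wt Z_N(\lambda)^2/(\E\wt Z_N(\lambda))^2\bigr)$ from the proof of Proposition~\ref{prop:leftendpoint} can then be used at $\lambda=1$. It gives $\wt B_N(1)\le\frac12\log\bigl(e^{-1/2}\E_G[\cosh(G/\sqrt N)^N]\bigr)=\frac18\log N+O(1)$. Hence $\Var(F_N(\beta_c))\le\frac14\log N+O(1)$, which already implies the stated $(\log N)^2$ bound with no overlap estimate at all. The I-MMSE and percolation steps only sharpen $\frac14$ to $\frac16$.
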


Dey and Kang \cite{DK2026} proved the variance asymptotic $\frac16\log N+O(1)$, together with a central limit theorem, in the near-critical high temperature window $\beta_N^2=\beta_c^2-cN^{-1/3}+o(N^{-1/3})$ for fixed $c>0$. Schertzer \cite{Schertzer2026} obtained the first logarithmic critical upper bound and a diverging lower bound,
\[
\frac12\log\log\log N-O(1)
\leq \Var\bigl(F_N(\beta_c)\bigr)
\leq \frac14\log N+O(1).
\]
More recently, Du and Huang \cite{DuHuang2026} proved the full critical variance asymptotic \eqref{pre-ask}, a Gaussian central limit theorem, and sharp bounds for the two-replica overlap at criticality. Their proof relies on spherical comparison, critical random matrix estimates, and the cavity method.

Our second main result provides a different proof for the variance asymptotic, organized around entropy and Gaussian convexity.

\begin{theorem}\label{thm:improved-critical}
For every integer $N\ge 1$,
\[
\Var\bigl(F_N(\beta_c)\bigr)= \frac16\log N + O(1).
\]
\end{theorem}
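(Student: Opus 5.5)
The proof splits into an upper and a lower bound, following the strategy outlined in the abstract.

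\emph{Upper bound.} Write $Z=Z_N(\beta_c)$ and let $\mathbb Q$ be the tilted disorder law $d\mathbb Q=(Z/\E Z)\,d\Pbb$. I would first express the variance through entropy. Convexity of $\lambda\mapsto\log\E Z^\lambda$ on $[0,1]$ gives
\[
\Var(F_N)=\partial_\lambda^2\log\E Z^\lambda\big|_{\lambda=0},
\]
and this curvature is controlled by the relative entropies $\KL(\mathbb Q\|\Pbb)=\E_{\mathbb Q}\log(Z/\E Z)$ and $\KL(\Pbb\|\mathbb Q)$. Concretely, $\E F_N\le\log\E Z\le \E_{\mathbb Q}F_N$, so
\[
\KL(\Pbb\|\mathbb Q)+\KL(\mathbb Q\|\Pbb)=\E_{\mathbb Q}F_N-\E F_N .
\]
A second-order Jensen/convexity argument should then bound $\Var(F_N)$ by $2\KL(\Pbb\|\mathbb Q)+O(1)$, with the variance identified as an entropy up to bounded error.

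Next, under $\mathbb Q$ the disorder is $g=W+\frac{\beta_c}{\sqrt N}\sigma^*\sigma^{*T}$, where $\sigma^*$ is uniform on $\Sigma_N$ and $W$ is a Gaussian matrix. This is the rank-one Gaussian synchronization model at signal-to-noise ratio $\beta_c$, and $\KL(\mathbb Q\|\Pbb)$ is the mutual-information-type divergence of that planted model. I would then interpolate the signal strength $t\in[0,\beta_c^2]$. The I-MMSE formula gives
\[
\frac{d}{dt}\KL=\frac{N}{4}\,\mathrm{MMSE}(t)\text{-deficit}
=\frac14\,\E\bigl\langle R(\sigma,\sigma^*)^2\bigr\rangle_t\cdot N\cdot(\text{const}),
\]
that is, the derivative is the expected squared overlap of the posterior with the planted spike.

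The main obstacle is bounding $N\,\E\langle R^2\rangle_t$ by $C/(\beta_c^2-t)$, cut off at $N^{1/3}$. Integrating this bound gives exactly $-\frac12\log(1-t/\beta_c^2)$ truncated at the window $N^{-1/3}$, that is, $\frac16\log N+O(1)$. My first attempt would use information percolation. Reveal the edges $g_{ij}$ whose planted signal is "informative" as a sparse Erd\H{o}s--R\'enyi graph of average degree $2t\approx 1$. The posterior correlation between $\sigma_i$ and $\sigma^*_i$ propagates only through connected components of this graph, so $N\E\langle R^2\rangle$ is bounded by the expected component size containing a vertex, i.e.\ the susceptibility. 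The susceptibility is $\asymp(1-c)^{-1}$ subcritically and $O(N^{1/3})$ in the critical window. Making the coupling between the Gaussian channel and the percolation graph rigorous, with only $O(1)$ loss, is the delicate point.

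\emph{Lower bound.} Apply Gaussian convexity at the replica parameter: the function $\lambda\mapsto\log\E Z^\lambda$ is convex, so
\[
\Var(F_N)\ge \text{(second difference of }\log\E Z^\lambda\text{ near }\lambda=0\text{, from moments at }\lambda=1,2\text{ and possibly negative }\lambda).
\]
This reduces the problem to asymptotics of $\E Z^2/(\E Z)^2$ and related inverse moments. At $\beta_c$, $\E Z^2/(\E Z)^2=\E\exp(\beta_c^2NR^2)$ diverges like $N^{1/6}$, which supplies the $\frac16\log N$. To make this sharp, I would compare the problem to a spherical model. Write the relevant ratio via an inverse moment of a quadratic form on the sphere, $\E\,(\lambda-\langle u,Mu\rangle)^{-1/2}$, and evaluate it through GOE eigenvalue densities. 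Here the identity relating the GOE density in dimension $N$ to that in dimension $N-1$ converts this into an edge-density integral, giving $\frac16\log N-O(1)$.
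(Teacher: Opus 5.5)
Both halves of your outline rest on the wrong convexity. Convexity of $s\mapsto\Lambda(s)=\log\E Z^s$ is just H\"older's inequality. It holds for every positive random variable, and it cannot bound $\Lambda''(0)=\Var(F_N)$ above by $2(\Lambda(1)-\Lambda'(0))$ or by $2(\Lambda'(1)-\Lambda(1))$. Both bounds fail, for instance, for a variable equal to $-M$ with small probability $\epsilon$ and to $0$ otherwise, once $M$ is large.

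The missing idea is Wei-Kuo Chen's Gaussian convexity principle. Since $F_N$ is a convex function of the Gaussian disorder, $\mathcal K(s)=s^{-1}\log\E e^{sF_N}$ is convex on all of $\R$, and $\mathcal K'(0)=\frac12\Var(F_N)$. This turns your unspecified ``second-order Jensen'' step into an exact inequality with no $O(1)$ loss:
$\frac12\Var(F_N)\le\mathcal K(1)-\mathcal K(0)=\KL(\Pbb\|\mathbb Q)\le\mathcal K'(1)=\KL(\mathbb Q\|\Pbb)$.
The second inequality is also what justifies your switch to $\KL(\mathbb Q\|\Pbb)$, the planted-model divergence that I-MMSE differentiates.

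Granting this, integrating the percolation bound over the whole range $[0,1]$ is a workable variant of the paper's argument, but only with sharp constants. A bound $C/(\beta_c^2-t)$ with unspecified $C$ gives $C'\log N$, not $\frac1{12}\log N$. The coupling you call delicate is exactly the Abbe--Boix-Adser\`a theorem: $\E m_{uv}^2\le\Pbb(u\leftrightarrow v)$ in $\mathcal G_{N,p}$ with $p=\lambda^2/N+O(N^{-2})$. Branching-process domination, $\E|\mathcal C(1)|\le(1-(N-1)p)^{-1}$, then gives $\frac{d}{d\lambda}\wt B_N\le\frac{\lambda}{2}(\E|\mathcal C(1)|-1)$. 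On $[0,\lambda_0]$ this is at most $\frac{\lambda^3}{2(1-\lambda^2)}$ plus an error integrating to $o(1)$, so $\wt B_N(\lambda_0)\le\frac14\log\frac{1}{1-\lambda_0^2}-\frac{\lambda_0^2}{4}+o(1)$. This is precisely what the paper obtains at $\lambda_0=1-N^{-1/3}$ from the second-moment secant $\wt{\mathcal K}(2)-\wt{\mathcal K}(1)$, a second use of Gaussian convexity. The paper then uses percolation, with the critical-window bound $\E|\mathcal C(1)|=O(N^{1/3})$, only on $[\lambda_0,1]$.

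The lower bound, as proposed, does not work, for three reasons.

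\emph{The exponent is wrong.} At $\beta_c$,
$\E Z^2/(\E Z)^2=\E\exp\bigl(S_N^2/(2N)\bigr)=\E_G\cosh(G/\sqrt N)^N\approx\E_G e^{G^2/2-G^4/(12N)}\asymp N^{1/4}$, not $N^{1/6}$.

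\emph{Positive moments give the wrong direction.} Even with Gaussian convexity, moments at positive replica parameters bound $\mathcal K'(0)$ only from \emph{above}, because secant slopes to the right of $0$ dominate the derivative. Your second difference at $s=0,1,2$ therefore yields $\Var(F_N)\le\log\frac{\E Z^2}{(\E Z)^2}=\frac14\log N+O(1)$, a non-sharp upper bound. A lower bound needs a negative parameter. Adding the secant inequalities on $[-2,0]$ and $[0,1]$ gives
$\Var(\mathcal F_N)\ge\frac N4+\frac12\log\E\mathcal Z_N^{-2}-B_N(\beta_c)$.
So the constant arises as $\frac16=\frac14-\frac1{12}$. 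You must prove $\E\mathcal Z_N^{-2}\ge cN^{1/2}e^{-N/2}$ and then subtract the entropy bound $B_N(\beta_c)\le\frac1{12}\log N+C$ from the upper-bound half. Your sketch misses this dependence of the lower bound on the upper-bound entropy estimate.

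\emph{The inverse moment is not an average of $(\lambda-\langle u,Mu\rangle)^{-1/2}$.} The paper's route has three steps:
\begin{itemize}
\item Haar averaging plus Jensen for $x\mapsto x^{-2}$ gives $\E\mathcal Z_N^{-2}\ge\E(\mathcal Z_N^{\mathrm{sph}})^{-2}$.
\item The one-replica contour formula $\mathcal Z_N^{\mathrm{sph}}\propto\int e^{Nz/2}\det(zI-W)^{-1/2}\,dz$ is taken on the vertical line through a soft-edge point $\gamma_N=2+O(N^{-2/3})$, on the event that the top three eigenvalues lie within $O(N^{-2/3})$ below $\gamma_N$. This gives $(\mathcal Z_N^{\mathrm{sph}})^{-2}\gtrsim N^{1/3}e^{N-N\gamma_N}\det(\gamma_N I-W)$.
\item The expectation of $\det(\gamma_N I-W)$ on that event is computed exactly by inserting $\gamma_N$ as the top eigenvalue of an $(N+1)$-point ensemble. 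This produces the Mehta-constant ratio $\asymp N^{-1/2}e^{-N/2}$, a factor $e^{N\gamma_N^2/4}$, and an edge density $\gtrsim N^{2/3}$.
\end{itemize}
Altogether the polynomial factor is $N^{1/3}\cdot N^{-1/2}\cdot N^{2/3}=N^{1/2}$.
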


For the upper bound, Gaussian convexity first gives $\Var(F_N)\leq2B_N$, where $B_N$ is an entropy under exponential tilting. After separating the diagonal disorder, $B_N$ is identified with the Kullback--Leibler divergence between the planted and null laws of a Gaussian synchronization model. The I-MMSE formula expresses its derivative through squared posterior edge correlations. An information-percolation inequality from Abbe--Boix-Adser~\cite{AbbeBoix2020} then bounds these correlations by connection probabilities in an Erd\H{o}s--R\'enyi graph in critical window, and the critical susceptibility estimate supplies the $O(1)$ cost of moving from $1-N^{-1/3}$ to the critical point.

For the lower bound, the Gaussian convexity principle from Wei-Kuo Chen~ \cite{Chen2024GaussianConvexity} is used to reduce the variance to an inverse second moment of the normalized partition function and the entropy estimated in the upper bound. Haar averaging compares the inverse moment with its spherical counterpart. Then a one replica contour formula, GOE soft edge information, and a dimension shift identity for the eigenvalue density yield the $N^{1/2}$. 

The rest of the paper is organized as follows. Section~\ref{sec:gaussian-maxima} proves Theorem~\ref{thm:main}. Section~\ref{sec:improved-critical-variance} proves Theorem~\ref{thm:improved-critical}: the entropy upper bound is established in Section~\ref{subsec:critical-upper-bound}, and the lower bound is proved in Section~\ref{subsec:critical-lower-bound}.

\section{Moderate deviations for Gaussian maxima}\label{sec:gaussian-maxima}




Let $\Phi$ denote the distribution function of the Gaussian law. For $t \in \mathbb{R}$ and $u\in(0,1)$, 
write $\bar\Phi(t)=1-\Phi(t),$ $\bar\Phi^{-1}(u)=\Phi^{-1}(1-u)$.  Thus 
$$
\mathbb P(\max_{1\le i\le N}X_i> t)=\bar\Phi(\Phi^{-1}\!\big(\mathbb P(\max_{1\le i\le N}X_i\le t)\big).
$$

Note that the function $\bar\Phi(\cdot)$ is decreasing. Our goal is to derive a lower bound for $\Phi^{-1}\!\left( \mathbb P\left(\max_{1\le i\le N}X_i\le t\right) \right),$ and therefore obtain an upper bound for \eqref{equ-main-1}. First, we need to introduce some lemmas.

\begin{lemma}[\cite{Ehr1983}]\label{lem:concavity}
The function $\Phi^{-1}\!\left( \mathbb P\left(\max_{1\le i\le N}X_i\le t\right) \right)$ is concave on $\mathbb{R}$.
\end{lemma}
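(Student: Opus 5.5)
The plan is to obtain the lemma from Ehrhard's inequality applied to the Gaussian measure of convex sublevel sets. First I will realize the field on standard Gaussian space. Since $(X_1,\dots,X_N)$ is a centered Gaussian vector, there exist vectors $a_1,\dots,a_N\in\R^N$ such that $(X_1,\dots,X_N)\overset{d}{=}(\langle a_1,G\rangle,\dots,\langle a_N,G\rangle)$, where $G$ is a standard Gaussian vector in $\R^N$ with law $\gamma_N$. For instance, one can take $a_i$ to be the rows of a square root of the covariance matrix. Define $f(x)=\max_i\langle a_i,x\rangle$. This is a maximum of linear functions, hence convex and positively homogeneous. Its sublevel sets
\[
K_t=\{x\in\R^N: f(x)\le t\}
\]
are closed convex sets, being intersections of half-spaces. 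They satisfy $\gamma_N(K_t)=\Pbb(\max_iX_i\le t)$.

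The key step is the sublevel-set inclusion
\[
K_{\lambda s+(1-\lambda)t}\supseteq \lambda K_s+(1-\lambda)K_t,\qquad \lambda\in[0,1].
\]
This follows directly from convexity: if $f(x)\le s$ and $f(y)\le t$, then $f(\lambda x+(1-\lambda)y)\le\lambda s+(1-\lambda)t$. Ehrhard's inequality \cite{Ehr1983} states that for Borel sets $A,B$, at least one of which is convex,
\[
\Phi^{-1}\bigl(\gamma_N(\lambda A+(1-\lambda)B)\bigr)\ge\lambda\Phi^{-1}(\gamma_N(A))+(1-\lambda)\Phi^{-1}(\gamma_N(B)).
\]
Both sets here are convex, so the classical version already suffices. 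Combining this with the inclusion and the monotonicity of $\gamma_N$ and $\Phi^{-1}$ gives
\[
\psi(\lambda s+(1-\lambda)t)\ge\lambda\psi(s)+(1-\lambda)\psi(t),\qquad \psi(t):=\Phi^{-1}\bigl(\Pbb(\max_iX_i\le t)\bigr).
\]
This is exactly the concavity of $\psi$.

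The only real obstacle is bookkeeping around degenerate values where $\psi$ takes the values $\pm\infty$. I will treat $\psi$ as an extended-real function with $\Phi^{-1}(0)=-\infty$ and $\Phi^{-1}(1)=+\infty$.
\begin{itemize}
\item Since $\operatorname{Var}(X_i)\le1$ and the vector is centered, if some $a_i\neq0$ then $\Pbb(\max_i X_i\le t)\in(0,1)$ for every $t$. The lower bound comes from $K_t\supseteq\{x:\langle a_i,x\rangle\le t\ \forall i\}$, which has positive measure because it contains a neighborhood of $-ca$ for a suitable direction, or simply because $\Pbb(\max_i X_i\le t)\ge\Pbb(\text{all }X_i\le t)>0$ by positivity of Gaussian orthant-type probabilities for nondegenerate conditions. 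The upper bound is $<1$ since $\Pbb(X_i>t)>0$. So $\psi$ is finite-valued on $\R$.
\item In the trivial case where all $a_i=0$, $\psi$ jumps from $-\infty$ to $+\infty$ at $0$. The paper's setting with $\E\max_i X_i\ge\alpha\sqrt{\log N}>0$ excludes this case, and the convention that Ehrhard's inequality holds whenever the right-hand side is defined covers it anyway.
\end{itemize}
Thus the lemma reduces cleanly to Ehrhard's inequality via convexity of $f$.
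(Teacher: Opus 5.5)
Your core argument is correct and is the standard derivation behind this lemma; the paper gives no proof and simply cites \cite{Ehr1983}. You write $X=AG$, note that the sublevel sets $K_t$ of the convex function $f(x)=\max_i\langle a_i,x\rangle$ satisfy $\lambda K_s+(1-\lambda)K_t\subseteq K_{\lambda s+(1-\lambda)t}$, and then apply Ehrhard's inequality for convex sets together with the monotonicity of $\gamma_N$ and $\Phi^{-1}$.

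The degenerate-case bookkeeping, however, contains a false claim. It is not true that $\Pbb(\max_iX_i\le t)>0$ for every $t$ as soon as some $a_i\neq0$, so $\psi$ need not be finite on $\R$. Take $X_1=Z$ and $X_2=-Z$ with $Z\sim\cN(0,1)$. Then $\max(X_1,X_2)=|Z|$, so $\psi(t)=-\infty$ for all $t\le0$.

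In general, for $t<0$ the set $K_t$ is empty exactly when $0\in\operatorname{conv}\{a_1,\dots,a_N\}$, by Gordan's alternative. Your ``neighborhood of $-ca$'' exists only when $0\notin\operatorname{conv}\{a_i\}$. Your second justification is circular, since $\{\max_iX_i\le t\}$ and $\{\text{all }X_i\le t\}$ are the same event. Such fields are also compatible with the hypotheses of Theorem~\ref{thm:main}: adjoining a coordinate $-X_1$ changes $\E\max_iX_i$ by only $O(1)$.

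Your main argument never uses finiteness, so the fix is simply to drop that claim. If some $a_i\neq0$, then $\psi<+\infty$ everywhere. Hence for $\lambda\in(0,1)$ the quantity $\lambda\psi(s)+(1-\lambda)\psi(t)$ is a well-defined element of $[-\infty,\infty)$. Ehrhard's inequality holds in this extended sense, trivially when the right side is $-\infty$. So $\psi$ is a concave $[-\infty,\infty)$-valued function, and it is finite on $(0,\infty)$ because $K_t$ contains a neighborhood of the origin for $t>0$.

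This is all the paper needs. The lemma is only applied at $m_N$, at $\kappa\sqrt{\log N}+\E\max_iX_i$, and at $t\sqrt{\log N}$ with $t>\sqrt2$. At these points $\psi$ is finite, and indeed $\psi(m_N)\ge0$.
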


Let $m_N$ be a median of $\max_{1\le i\le N}X_i$, that is, $\mathbb P\!\left(\max_{1\le i\le N}X_i\le m_N\right)\ge \frac12,$ and $\mathbb P\!\left(\max_{1\le i\le N}X_i\ge m_N\right)\ge \frac12.$ Since $\Phi^{-1}$ is increasing and $\Phi^{-1}(1/2)=0$, it follows that
\begin{equation}\label{medde}
\Phi^{-1}\!\left(
    \mathbb P\left(\max_{1\le i\le N}X_i\le m_N\right)
\right)
\ge 0.
\end{equation}


By the Gaussian concentration inequality around a median, we have

\begin{lemma}[\cite{VER2020}]\label{lem:median-mean}
For every $u\ge 0$,
\[
\mathbb P\bigl(|\max_{1\le i\le N}X_i-m_N|\ge u\bigr)\le 2e^{-u^2/2}.
\]
Consequently,
\begin{equation}\label{mean-differ-median}
    | \E \max_{1\le i\le N} X_i-m_N|
\le \E|\max_{1\le i\le N}X_i-m_N|
\le \int_0^\infty 2e^{-u^2/2}\,du
=\sqrt{2\pi}=C_{\mathrm{med}}.
\end{equation}

\end{lemma}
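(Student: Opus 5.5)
The statement to prove is Lemma~\ref{lem:median-mean}. Write $M=\max_{1\le i\le N}X_i$. The plan has two parts: first the two-sided tail bound around the median, by Gaussian isoperimetry, and then the mean--median comparison, by integrating the tails.

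\textbf{Step 1: representation.} Represent the Gaussian vector as $X=AZ$, where $Z$ is standard Gaussian on $\R^N$ and $AA^{\top}$ is the covariance matrix. The map $f(z)=\max_i (Az)_i$ is Lipschitz with constant $\max_i|a_i|=\max_i\sqrt{\Var(X_i)}\le 1$, where $a_i$ denotes the $i$-th row of $A$. Indeed, $|f(z)-f(w)|\le \max_i |\langle a_i,z-w\rangle|\le |z-w|$.

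\textbf{Step 2: isoperimetry.} Let $K=\{z: f(z)\le m_N\}$. By the median property, $\gamma_N(K)\ge \tfrac12$. The Gaussian isoperimetric inequality (Borell, Sudakov--Tsirelson, a special case of the Ehrhard inequality underlying Lemma~\ref{lem:concavity}) gives $\gamma_N(K+uB)\ge \Phi(\Phi^{-1}(\gamma_N(K))+u)\ge\Phi(u)$. Since $f$ is $1$-Lipschitz, the event $\{f> m_N+u\}$ lies outside $K+uB$. Hence
\[
\Pbb(M>m_N+u)\le \bar\Phi(u)\le e^{-u^2/2}.
\]
The last inequality uses the standard bound $\bar\Phi(u)\le \tfrac12 e^{-u^2/2}$, so the estimate even holds with a factor $\tfrac12$.

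The same argument applied to $K'=\{f\ge m_N\}$, which also has measure at least $\tfrac12$, gives $\Pbb(M<m_N-u)\le e^{-u^2/2}$. Adding the two tails yields the claimed bound $2e^{-u^2/2}$.

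\textbf{Step 3: mean versus median.} First, $|\E M-m_N|\le \E|M-m_N|$ by Jensen's inequality. By the layer-cake formula,
\[
\E|M-m_N|=\int_0^\infty\Pbb(|M-m_N|\ge u)\,du\le\int_0^\infty 2e^{-u^2/2}\,du=\sqrt{2\pi}.
\]

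The only point needing care is the boundary issue in Step 2: the tail bound is stated with $\ge u$ but isoperimetry is applied with a strict inequality. This is handled by monotonicity, replacing $u$ by $u-\varepsilon$ and letting $\varepsilon\downarrow 0$, using continuity of $e^{-u^2/2}$. Alternatively, one may simply invoke the Lipschitz concentration around the median from \cite{VER2020} directly. No step is a real obstacle; the main input is Gaussian isoperimetry.
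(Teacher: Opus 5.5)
Your argument is correct and follows the paper's route. The paper cites Gaussian concentration of the $1$-Lipschitz function $\max_i X_i$ around its median from \cite{VER2020} and then integrates the tail; you derive that concentration from Gaussian isoperimetry, which even gives the sharper bound $2\bar\Phi(u)\le e^{-u^2/2}$, and your Jensen plus layer-cake step giving $\sqrt{2\pi}$ matches the paper's computation exactly.
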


Hence, for all sufficiently large $N$, 
\begin{equation}\label{compu rnbn}
     \kappa \sqrt{\log N} + \E \max_{1\le i\le N} X_i - m_N
    \ge \kappa \sqrt{\log N} - C_{\mathrm{med}}
    \ge 0.
\end{equation}

Moreover, for every fixed $t>\sqrt2$, we have

\begin{equation}\label{compare-equ}
    b_N\le \kappa \sqrt{\log N} + \E \max_{1\le i\le N} X_i\le t\sqrt{\log N}.
\end{equation}

Now we are ready to derive the lower bound of $\Phi^{-1}\left(
\mathbb P\left(
\max_{1\le i\le N}X_i
\le
\kappa\sqrt{\log N}
+
\mathbb E\max_{1\le i\le N}X_i
\right)
\right)$ based on the concavity of $\Phi^{-1}\left(
\mathbb P\left(
\max_{1\le i\le N}X_i\le t
\right)
\right)$ and \eqref{compare-equ}.
\begin{lemma}\label{lem:psi-lower-bound-at-rN} Let \(t>\sqrt{2}\) be fixed. Then \begin{align} &\Phi^{-1}\!\left( \mathbb P\left( \max_{1\le i\le N}X_i \le \kappa\sqrt{\log N} + \mathbb E\max_{1\le i\le N}X_i \right) \right) \notag\\ &\qquad\ge \left( \frac{\kappa}{t-\alpha}+o(1) \right) \Phi^{-1}\!\left( \mathbb P\left( \max_{1\le i\le N}X_i \le t\sqrt{\log N} \right) \right), \label{eq:concavity-lower-bound} \end{align} 
which implies \begin{align} &\mathbb P\left( \max_{1\le i\le N}X_i \ge \kappa\sqrt{\log N} + \mathbb E\max_{1\le i\le N}X_i \right) \notag\\ &\qquad\le \exp\left\{ - \left( \frac{\kappa}{t-\alpha}+o(1) \right)^2 \left[ \Phi^{-1}\!\left( \mathbb P\left( \max_{1\le i\le N}X_i \le t\sqrt{\log N} \right) \right) \right]^2 \right\}. \end{align} \end{lemma}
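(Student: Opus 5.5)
Let $\psi(s)=\Phi^{-1}\bigl(\mathbb P(\max_{1\le i\le N}X_i\le s)\bigr)$, which is concave on $\mathbb R$ by Lemma~\ref{lem:concavity}. Set $r_N=\kappa\sqrt{\log N}+\E\max_{1\le i\le N}X_i$ and $T_N=t\sqrt{\log N}$. By \eqref{compu rnbn} and \eqref{compare-equ}, for $N$ large we have $m_N\le r_N\le T_N$. The plan is to bound $\psi(r_N)$ below by the chord of $\psi$ joining $(m_N,\psi(m_N))$ to $(T_N,\psi(T_N))$. Write
\[
r_N=(1-\lambda_N)m_N+\lambda_N T_N,\qquad \lambda_N=\frac{r_N-m_N}{T_N-m_N}\in[0,1].
\]
Concavity then gives $\psi(r_N)\ge(1-\lambda_N)\psi(m_N)+\lambda_N\psi(T_N)$. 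By \eqref{medde}, $\psi(m_N)\ge0$, so the first term is nonnegative and can be dropped, leaving $\psi(r_N)\ge\lambda_N\psi(T_N)$. We also need $\psi(T_N)\ge 0$, so that the direction of the inequality does not depend on the sign of $\lambda_N$'s coefficient. This holds because $T_N\ge m_N$ and $\psi$ is nondecreasing.

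Next I estimate $\lambda_N$. By Lemma~\ref{lem:median-mean}, $m_N=\E\max_{1\le i\le N}X_i+O(1)$, so $r_N-m_N=\kappa\sqrt{\log N}+O(1)$. For the denominator, I use the hypothesis $\E\max_{1\le i\le N}X_i\ge\alpha\sqrt{\log N}$ to get $T_N-m_N\le (t-\alpha)\sqrt{\log N}+O(1)$. The denominator is also positive and of order $\sqrt{\log N}$: by the upper constraint, $m_N\le\sqrt{2\log N}-\kappa\sqrt{\log N}+O(1)$ and $t>\sqrt2$. Hence
\[
\lambda_N\ge\frac{\kappa\sqrt{\log N}-C_{\mathrm{med}}}{(t-\alpha)\sqrt{\log N}+C_{\mathrm{med}}}=\frac{\kappa}{t-\alpha}+o(1).
\]
Since $\psi(T_N)\ge0$, this lower bound on $\lambda_N$ gives \eqref{eq:concavity-lower-bound}. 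The only subtle point is the direction of the inequalities: a lower bound on $\lambda_N$ is exactly what is needed, and that is why we use the upper bound on $T_N-m_N$ coming from the hypothesis on the mean.

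For the tail consequence, I write
\[
\mathbb P\Bigl(\max_{1\le i\le N}X_i>r_N\Bigr)=\bar\Phi(\psi(r_N))
\]
and use the elementary bound $\bar\Phi(x)\le e^{-x^2/2}\le e^{-x^2}$ for $x\ge0$; actually $e^{-x^2/2}$ is what is true, and I would state the result with that. The displayed exponent without the factor $1/2$ can be absorbed by adjusting the normalization. Since $\bar\Phi$ is decreasing and $\psi(r_N)\ge(\frac{\kappa}{t-\alpha}+o(1))\psi(T_N)\ge0$, this gives the claimed exponential bound.

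Replacing $\ge$ by $>$ at $r_N$ costs nothing, because the law of the maximum has no atoms except in degenerate cases; alternatively one can take left limits using monotonicity of $\psi$. The main obstacle is bookkeeping the constant in the exponent, namely whether a factor $1/2$ appears. I would follow the Mills-ratio bound and let the later union-bound estimate of $\psi(T_N)\approx\sqrt{(t^2-2)\log N}$ fix the final constant.
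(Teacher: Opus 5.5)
Your proof of \eqref{eq:concavity-lower-bound} is correct and follows the paper's route. You use the same chord of the concave function from $m_N$ to $t\sqrt{\log N}$, discard the nonnegative median term, and bound $\lambda_N$ from below via $|\E\max_i X_i-m_N|\le C_{\mathrm{med}}$ and $\E\max_i X_i\ge\alpha\sqrt{\log N}$. Your explicit check that $\psi(t\sqrt{\log N})\ge 0$ supplies a sign condition the paper uses without comment.

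One point in your tail paragraph is wrong, though. The missing factor $\tfrac12$ cannot be ``absorbed by adjusting the normalization'': it multiplies the fixed constant $(\kappa/(t-\alpha))^2$, so it is not $o(1)$. In fact the second display as printed, without the $\tfrac12$, is false. Combined with Lemma~\ref{lem:psi-lower-bound-at-tscale} at $t=2/\alpha$, it would give the exponent $2\kappa^2/(2-\alpha^2)$. That is twice the sharp exponent exhibited by the equicorrelated example \eqref{eq:equi}--\eqref{eq:sharp_moderate_deviation}.

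So the version you say you would actually state is the correct one: use $\bar\Phi(x)\le e^{-x^2/2}$, which puts a factor $\tfrac12$ in the exponent. This is exactly the form the paper uses in Proposition~\ref{prop:tail-bound-fixed-t}.
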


\begin{proof} Fix \(t>\sqrt{2}\). By \eqref{compare-equ}, for all sufficiently large \(N\), \[ m_N \le \kappa\sqrt{\log N} + \mathbb E\max_{1\le i\le N}X_i \le t\sqrt{\log N}. \] Set \[ \lambda_N = \frac{ \kappa\sqrt{\log N} + \mathbb E\max_{1\le i\le N}X_i - m_N }{ t\sqrt{\log N} - m_N } \in[0,1]. \] Then \[ \kappa\sqrt{\log N} + \mathbb E\max_{1\le i\le N}X_i = \lambda_N t\sqrt{\log N} + (1-\lambda_N)m_N . \] By the concavity in Lemma~\ref{lem:concavity}, 
\begin{equation}\label{eq:concavity-interpolation}
\begin{aligned}
    &\Phi^{-1}\!\left(
        \mathbb P\left(
            \max_{1\le i\le N}X_i
            \le
            \kappa\sqrt{\log N}
            +
            \mathbb E\max_{1\le i\le N}X_i
        \right)
    \right)                                      \\
    &\qquad\ge
    \lambda_N
    \Phi^{-1}\!\left(
        \mathbb P\left(
            \max_{1\le i\le N}X_i
            \le
            t\sqrt{\log N}
        \right)
    \right)                                      \\
    &\qquad\quad+
    (1-\lambda_N)
    \Phi^{-1}\!\left(
        \mathbb P\left(
            \max_{1\le i\le N}X_i
            \le
            m_N
        \right)
    \right).
\end{aligned}
\end{equation}

 Since \[ \Phi^{-1}\!\left( \mathbb P\left( \max_{1\le i\le N}X_i \le m_N \right) \right) \ge 0 \] and \(1-\lambda_N\ge0\), the second term on the right-hand side of \eqref{eq:concavity-interpolation} is nonnegative. Therefore, \begin{align} &\Phi^{-1}\!\left( \mathbb P\left( \max_{1\le i\le N}X_i \le \kappa\sqrt{\log N} + \mathbb E\max_{1\le i\le N}X_i \right) \right) \notag\\ &\qquad\ge \frac{ \kappa\sqrt{\log N} + \mathbb E\max_{1\le i\le N}X_i - m_N }{ t\sqrt{\log N} - m_N } \Phi^{-1}\!\left( \mathbb P\left( \max_{1\le i\le N}X_i \le t\sqrt{\log N} \right) \right). \label{eq:concavity-step} \end{align} 
Using $\left| \mathbb E\max_{1\le i\le N}X_i - m_N \right| \le C_{\mathrm{med}}$ and $\mathbb E\max_{1\le i\le N}X_i \ge \alpha\sqrt{\log N},$ we estimate \begin{align} &\frac{ \kappa\sqrt{\log N} + \mathbb E\max_{1\le i\le N}X_i - m_N }{ t\sqrt{\log N} - m_N } \notag\\ &\qquad= \frac{ \kappa\sqrt{\log N} + \left( \mathbb E\max_{1\le i\le N}X_i - m_N \right) }{ t\sqrt{\log N} - \mathbb E\max_{1\le i\le N}X_i + \left( \mathbb E\max_{1\le i\le N}X_i - m_N \right) } \notag\\ &\qquad\ge \frac{ \kappa\sqrt{\log N} - C_{\mathrm{med}} }{ (t-\alpha)\sqrt{\log N} + C_{\mathrm{med}} } \notag\\ &\qquad= \frac{\kappa+o(1)}{t-\alpha+o(1)} = \frac{\kappa}{t-\alpha}+o(1). \label{eq:lambda-lower-bound} \end{align} Combining \eqref{eq:concavity-step} and \eqref{eq:lambda-lower-bound}, we obtain \eqref{eq:concavity-lower-bound}. \end{proof}

Then we introduce a quantitative lower bound for $\Phi^{-1}\left(
\mathbb P\left(\max_{1\le i\le N}X_i\le t\sqrt{\log N}\right)
\right)$ using union bound and Gaussian tail bound.

\begin{lemma}\label{lem:psi-lower-bound-at-tscale}
Let $t>\sqrt{2}$ be fixed, and define
\[
    q_t=\sqrt{(t^2-2)\log N-2\log\log N}.
\]
Then, for all sufficiently large $N$,
\begin{equation}\label{eq:qt_lower_bound}
    \Phi^{-1}\left(
    \mathbb P\left(\max_{1\le i\le N}X_i\le t\sqrt{\log N}\right)
    \right)
    \ge q_t .
\end{equation}
In particular,
\begin{equation}\label{eq:qt_asymptotic}
    \Phi^{-1}\left(
    \mathbb P\left(\max_{1\le i\le N}X_i\le t\sqrt{\log N}\right)
    \right)
    =
    \sqrt{t^2-2}\sqrt{\log N}+o(\sqrt{\log N}),
    \qquad N\to\infty .
\end{equation}
\end{lemma}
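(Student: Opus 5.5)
The plan is to bound the tail $\Pbb(\max_i X_i>t\sqrt{\log N})$ by a union bound and Mills' ratio, and then invert with the two-sided Mills bounds for $\bar\Phi$. A genuine matching upper bound in \eqref{eq:qt_asymptotic} is not available for general fields, and I explain below why.

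\textbf{Lower bound.} Since $\Var(X_i)\le 1$, the union bound and the upper Mills bound give
\[
p_N:=\Pbb\Bigl(\max_{1\le i\le N}X_i>t\sqrt{\log N}\Bigr)\le N\,\bar\Phi(t\sqrt{\log N})\le \frac{N^{1-t^2/2}}{t\sqrt{2\pi\log N}}.
\]
By the identity displayed at the start of this section, $\Phi^{-1}(1-p_N)=\bar\Phi^{-1}(p_N)$, and $\bar\Phi^{-1}$ is decreasing. So \eqref{eq:qt_lower_bound} follows once we check $\bar\Phi(q_t)\ge N^{1-t^2/2}/(t\sqrt{2\pi\log N})$. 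For this I use the lower Mills bound $\bar\Phi(q)\ge \frac{q}{1+q^2}\varphi(q)$, where $\varphi$ is the standard Gaussian density. We have $e^{-q_t^2/2}=N^{1-t^2/2}\log N$, and $q_t/(1+q_t^2)\asymp (\log N)^{-1/2}$. Hence
\[
\bar\Phi(q_t)\gtrsim N^{1-t^2/2}\sqrt{\log N},
\]
which beats the required quantity by a factor of order $\log N$ for all large $N$. This is exactly why the correction $-2\log\log N$ appears in $q_t$. Finally, $q_t=\sqrt{t^2-2}\sqrt{\log N}+o(\sqrt{\log N})$ gives the lower half of \eqref{eq:qt_asymptotic}.

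\textbf{Upper half.} To prove $\Phi^{-1}(1-p_N)\le\sqrt{t^2-2}\sqrt{\log N}+o(\sqrt{\log N})$, one would need $p_N\ge N^{1-t^2/2-o(1)}$. In other words, the union bound would have to be tight on the exponential scale. I would try the second-moment (Paley--Zygmund) argument on the count of exceedances $\#\{i:X_i>t\sqrt{\log N}\}$, or simply $p_N\ge \max_i\Pbb(X_i>t\sqrt{\log N})$.

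This step is the main obstacle, and it cannot succeed for general fields. The single-coordinate bound only gives $N^{-t^2/2}$ when $\Var(X_i)=1$. Worse, the equicorrelated field \eqref{eq:equi} has
\[
\Phi^{-1}(1-p_N)\sim\frac{t-\alpha}{\sqrt{1-\alpha^2/2}}\sqrt{\log N},
\]
which can exceed $\sqrt{t^2-2}\sqrt{\log N}$. Similarly, a single repeated coordinate gives $t\sqrt{\log N}$. So the equality in \eqref{eq:qt_asymptotic} should be proved as written only for fields where the union bound is sharp, such as i.i.d.\ $X_i$. In that case $p_N=1-\Phi(t\sqrt{\log N})^N\ge \tfrac12 N\bar\Phi(t\sqrt{\log N})$, and inverting with Mills' ratio gives the matching upper bound.

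\textbf{How it is used.} For Theorem~\ref{thm:main} only the lower bound \eqref{eq:qt_lower_bound} matters. Feeding it into Lemma~\ref{lem:psi-lower-bound-at-rN} gives
\[
\Pbb\Bigl(\max_{1\le i\le N}X_i\ge \E\max_{1\le i\le N}X_i+\kappa\sqrt{\log N}\Bigr)\le N^{-\kappa^2(t^2-2)/(t-\alpha)^2+o(1)}.
\]
Minimizing over $t>\sqrt2$ gives the optimum at $t=2/\alpha$, where $(t^2-2)/(t-\alpha)^2=1/(2-\alpha^2)$, which is the exponent in \eqref{equ-main-1}. For this reason I would state and prove \eqref{eq:qt_asymptotic} in the form "$\ge\sqrt{t^2-2}\sqrt{\log N}-o(\sqrt{\log N})$", with equality holding in the independent case.
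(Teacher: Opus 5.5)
Your proof of \eqref{eq:qt_lower_bound} is the same as the paper's. It uses the union bound with the upper Mills bound, the lower Mills bound evaluated at $q_t$, and monotonicity of $\bar\Phi^{-1}$.

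On \eqref{eq:qt_asymptotic} you depart from the paper, and you are right to. The paper says the equality ``follows from the definition of $q_t$'', but that only gives the lower half, and the upper half is false under the stated hypotheses. Your equicorrelated example works exactly when $\sqrt2<t<2/\alpha$. In that range the optimal strategy keeps $\max_iG_i\approx\sqrt{2\log N}$ and tilts $G_0$, which gives $\Pbb(\max_iX_i>t\sqrt{\log N})=N^{-(t-\alpha)^2/(2-\alpha^2)+o(1)}$. The strict inequality $(t-\alpha)^2/(2-\alpha^2)>(t^2-2)/2$ is just $h(t)<h(2/\alpha)$ for the function $h$ in the proof of Theorem~\ref{thm:main}. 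For $t\ge 2/\alpha$, by contrast, the union bound is sharp for this field. To meet $\E\max_iX_i\ge\alpha\sqrt{\log N}$ exactly, build the field with a parameter slightly above $\alpha$. Proposition~\ref{prop:tail-bound-fixed-t} uses only the lower bound, so replacing \eqref{eq:qt_asymptotic} by the one-sided statement, as you propose, costs nothing downstream.

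There is one slip in your closing paragraph. Because $\bar\Phi(x)\le e^{-x^2/2}$, the resulting bound is $N^{-\kappa^2(t^2-2)/(2(t-\alpha)^2)+o(1)}$; the second display of Lemma~\ref{lem:psi-lower-bound-at-rN} omits the same factor $\tfrac12$. Also, $h(2/\alpha)=2/(2-\alpha^2)$, not $1/(2-\alpha^2)$. The two factors of $2$ cancel, so your final exponent $\kappa^2/(2-\alpha^2)$ is correct.
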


\begin{proof}
Since $\operatorname{Var}(X_i)\le 1$, for every $1\le i\le N$ and $x>0$, we have $\mathbb P(X_i>x)\le \bar\Phi(x).$ Therefore, by the union bound,
\[
    \mathbb P\left(\max_{1\le i\le N}X_i>x\right)
    \le N\bar\Phi(x).
\]
Using $\bar\Phi(x)\le \frac{1}{x\sqrt{2\pi}}e^{-x^2/2},$ we obtain
\begin{equation}\label{eq:union_tail_bound}
    \mathbb P\left(\max_{1\le i\le N}X_i>t\sqrt{\log N}\right)
    \le
    \frac{N}{t\sqrt{\log N}\sqrt{2\pi}}
    e^{-t^2\log N/2}
    =
    \frac{1}{t\sqrt{\log N}\sqrt{2\pi}}N^{1-t^2/2}.
\end{equation}
Moreover,
\begin{align}
    \bar\Phi\left(
    \Phi^{-1}\left(
    \mathbb P\left(\max_{1\le i\le N}X_i\le x\right)
    \right)
    \right)
    &=
    1-\mathbb P\left(\max_{1\le i\le N}X_i\le x\right)  \notag\\
    &=
    \mathbb P\left(\max_{1\le i\le N}X_i>x\right).
\end{align}
Hence
\[
    \Phi^{-1}\left(
    \mathbb P\left(\max_{1\le i\le N}X_i\le t\sqrt{\log N}\right)
    \right)
    =
    \bar\Phi^{-1}\left(
    \mathbb P\left(\max_{1\le i\le N}X_i>t\sqrt{\log N}\right)
    \right).
\]

Recall that $q_t=\sqrt{(t^2-2)\log N-2\log\log N}.$ By the lower Mills ratio bound,
\begin{equation}\label{eq:lower_mills_ratio}
    \bar\Phi(x)
    \ge
    \frac{1}{\sqrt{2\pi}}\frac{x}{1+x^2}e^{-x^2/2}.
\end{equation}
Applying \eqref{eq:lower_mills_ratio} with $x=q_t$, and noting that
$q_t\asymp \sqrt{\log N}$, we get
\[
    \frac{q_t}{1+q_t^2}\asymp (\log N)^{-1/2},
\]
while
\[
    e^{-q_t^2/2}
    =
    e^{-\frac12[(t^2-2)\log N-2\log\log N]}
    =
    N^{1-t^2/2}\log N.
\]
Therefore,
\[
    \bar\Phi(q_t)
    \ge
    c(\log N)^{1/2}N^{1-t^2/2}
\]
for some absolute constant $c>0$. Hence, \eqref{eq:union_tail_bound} yields
\[
    \mathbb P\left(\max_{1\le i\le N}X_i>t\sqrt{\log N}\right)
    \le
    \frac{C}{\sqrt{\log N}}N^{1-t^2/2}
\]
for some absolute constant $C>0$, whereas
\[
    \bar\Phi(q_t)
    \ge
    c(\log N)^{1/2}N^{1-t^2/2}.
\]
Since $(\log N)^{1/2}\gg(\log N)^{-1/2}$ as $N\to\infty$, it follows that
\[
    \mathbb P\left(\max_{1\le i\le N}X_i>t\sqrt{\log N}\right)
    \le
    \bar\Phi(q_t)
\]
for all sufficiently large $N$. Hence
\[
    \Phi^{-1}\left(
    \mathbb P\left(\max_{1\le i\le N}X_i\le t\sqrt{\log N}\right)
    \right)
    \ge q_t,
\]
which proves \eqref{eq:qt_lower_bound}. The asymptotic statement \eqref{eq:qt_asymptotic} follows from the definition of $q_t$.
\end{proof}

Combining Lemma~\ref{lem:psi-lower-bound-at-rN} with Lemma~\ref{lem:psi-lower-bound-at-tscale}, we obtain the following quantitative estimate.

\begin{proposition}\label{prop:tail-bound-fixed-t}
For every fixed $t>\sqrt{2}$, we have
\begin{align}
    &\mathbb P\left(
        \max_{1\le i\le N}X_i
        >
        \kappa\sqrt{\log N}
        +
        \mathbb E\max_{1\le i\le N}X_i
    \right) \notag\\
    &\qquad =
    \bar\Phi\left(
    \Phi^{-1}\left(
    \mathbb P\left(
        \max_{1\le i\le N}X_i
        \le
        \kappa\sqrt{\log N}
        +
        \mathbb E\max_{1\le i\le N}X_i
    \right)
    \right)
    \right) \notag\\
    &\qquad \le
    \exp\left(
    -\left(
    \frac{\kappa^2(t^2-2)}{2(t-\alpha)^2}
    +o(1)
    \right)\log N
    \right).
    \label{eq:proposition_quantitative_estimate}
\end{align}
\end{proposition}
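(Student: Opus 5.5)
The plan is to combine Lemma~\ref{lem:psi-lower-bound-at-rN} with Lemma~\ref{lem:psi-lower-bound-at-tscale} and then convert the resulting quantile bound into a tail bound using the upper Mills ratio estimate.

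The first line of \eqref{eq:proposition_quantitative_estimate} is just the identity $\bar\Phi(\Phi^{-1}(p))=1-p$, applied with $p=\mathbb P(\max_i X_i\le r_N)$, where $r_N=\kappa\sqrt{\log N}+\E\max_iX_i$. For the inequality, set
\[
y_N=\Phi^{-1}\bigl(\mathbb P(\max_iX_i\le r_N)\bigr).
\]
By \eqref{eq:concavity-lower-bound} and \eqref{eq:qt_lower_bound}, for all large $N$ we have
\[
y_N\ge\Bigl(\frac{\kappa}{t-\alpha}+o(1)\Bigr)q_t .
\]
The factor $\frac{\kappa}{t-\alpha}+o(1)$ is eventually positive because $\kappa>0$ and $t>\sqrt2>\alpha$. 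Hence, using \eqref{eq:qt_asymptotic},
\[
y_N\ge\Bigl(\frac{\kappa\sqrt{t^2-2}}{t-\alpha}+o(1)\Bigr)\sqrt{\log N}.
\]
Since the prefactor is strictly positive, $y_N\to\infty$.

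Next, use that $\bar\Phi$ is decreasing together with the bound $\bar\Phi(x)\le \frac{1}{2}e^{-x^2/2}$ for $x\ge0$; the Mills bound already used in the proof of Lemma~\ref{lem:psi-lower-bound-at-tscale} also works once $x\ge1$. This gives
\[
\bar\Phi(y_N)\le \exp\Bigl(-\tfrac12\Bigl(\frac{\kappa\sqrt{t^2-2}}{t-\alpha}+o(1)\Bigr)^2\log N\Bigr).
\]
Expanding the square, the main term is $\frac{\kappa^2(t^2-2)}{2(t-\alpha)^2}\log N$. All cross terms and squared error terms are $o(1)\log N$, because $\kappa$, $t$ and $\alpha$ are fixed. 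This is exactly \eqref{eq:proposition_quantitative_estimate}. Alternatively, one can apply the second display of Lemma~\ref{lem:psi-lower-bound-at-rN} directly, which already has this form with an extra factor $\tfrac12$ that must be tracked.

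The only delicate point is bookkeeping of signs when squaring a lower bound. The inequality $y_N\ge a_N$ implies $y_N^2\ge a_N^2$ only when $a_N\ge0$. This is guaranteed for large $N$ by the positivity of $\frac{\kappa}{t-\alpha}$ and of $q_t$, since $(t^2-2)\log N-2\log\log N>0$ eventually. We also need the $o(1)$ terms to be uniform in the sense required, which holds because they depend only on $N$ for fixed $t,\kappa,\alpha$. Optimizing over $t$ to reach $\kappa^2/(2-\alpha^2)$ is not part of this statement; that minimum occurs at $t=2/\alpha$ and is handled afterwards.
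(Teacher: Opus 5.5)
Your proposal is correct and follows the paper's proof essentially verbatim. You chain the concavity bound of Lemma~\ref{lem:psi-lower-bound-at-rN} with the quantile bound $q_t$ of Lemma~\ref{lem:psi-lower-bound-at-tscale} to get $y_N\ge\bigl(\kappa\sqrt{t^2-2}/(t-\alpha)+o(1)\bigr)\sqrt{\log N}$, then apply $\bar\Phi(x)\le e^{-x^2/2}$ (your extra constant $\tfrac12$ is immaterial). Your explicit positivity check before squaring is a detail the paper leaves implicit. You are also right to work from the quantile bound rather than the second display of Lemma~\ref{lem:psi-lower-bound-at-rN}, whose exponent as printed omits the factor $\tfrac12$.
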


\begin{proof}
Combining Lemma~3 and Lemma~4, we obtain
\begin{align}
    &\Phi^{-1}\left(
    \mathbb P\left(
        \max_{1\le i\le N}X_i
        \le
        \kappa\sqrt{\log N}
        +
        \mathbb E\max_{1\le i\le N}X_i
    \right)
    \right) \notag\ge
    \left(\frac{\kappa}{t-\alpha}+o(1)\right)
    \Phi^{-1}\left(
    \mathbb P\left(
        \max_{1\le i\le N}X_i
        \le
        t\sqrt{\log N}
    \right)
    \right) \notag\\
    &\qquad\ge
    \left(
    \frac{\kappa\sqrt{t^2-2}}{t-\alpha}
    +o(1)
    \right)\sqrt{\log N}.   \label{eq:quantile_lower_bound_shifted_max}
\end{align}
In particular,
\[
    \Phi^{-1}\left(
    \mathbb P\left(
        \max_{1\le i\le N}X_i
        \le
        \kappa\sqrt{\log N}
        +
        \mathbb E\max_{1\le i\le N}X_i
    \right)
    \right)
    \to\infty.
\]
Using \eqref{eq:quantile_lower_bound_shifted_max} and the fact that $\bar\Phi(x)\le e^{-x^2/2},$ we obtain
\begin{align*}
    &\mathbb P\left(
        \max_{1\le i\le N}X_i
        >
        \kappa\sqrt{\log N}
        +
        \mathbb E\max_{1\le i\le N}X_i
    \right) \\
    &\qquad =
    \bar\Phi\left(
    \Phi^{-1}\left(
    \mathbb P\left(
        \max_{1\le i\le N}X_i
        \le
        \kappa\sqrt{\log N}
        +
        \mathbb E\max_{1\le i\le N}X_i
    \right)
    \right)
    \right) \\
    &\qquad \le
    \exp\left[
    -\frac12
    \left(
    \Phi^{-1}\left(
    \mathbb P\left(
        \max_{1\le i\le N}X_i
        \le
        \kappa\sqrt{\log N}
        +
        \mathbb E\max_{1\le i\le N}X_i
    \right)
    \right)
    \right)^2
    \right] \\
    &\qquad \le
    \exp\left(
    -\left(
    \frac{\kappa^2(t^2-2)}{2(t-\alpha)^2}
    +o(1)
    \right)\log N
    \right).
\end{align*}
\end{proof}

\begin{proof}[Proof of Theorem~1]
By Proposition~\ref{prop:tail-bound-fixed-t}, for every fixed $t>\sqrt{2}$,
\begin{align}
    &\mathbb P\left(
        \max_{1\le i\le N}X_i
        >
        \mathbb E\max_{1\le i\le N}X_i
        +
        \kappa\sqrt{\log N}
    \right) \notag\\
    &\qquad\le
    \exp\left(
    -\left(
    \frac{\kappa^2}{2}\frac{t^2-2}{(t-\alpha)^2}
    +o(1)
    \right)\log N
    \right).
    \label{eq:proof_thm1_pre_optimization}
\end{align}
It remains to optimize over $t>\sqrt{2}$. Set $h(t)=\frac{t^2-2}{(t-\alpha)^2}.$ A direct computation gives
\[
    h'(t)=\frac{2(2-\alpha t)}{(t-\alpha)^3}.
\]
Since $\alpha\in(0,\sqrt{2})$, we have $2/\alpha>\sqrt{2}$, and therefore $h$ attains its maximum on $(\sqrt{2},\infty)$ at $t_*=\frac{2}{\alpha}.$ Substituting $t_*$ yields $h(t_*)=\frac{2}{2-\alpha^2}.$ Hence, taking $t=t_*$ in \eqref{eq:proof_thm1_pre_optimization}, we obtain
\[
    \mathbb P\left(
        \max_{1\le i\le N}X_i
        >
        \mathbb E\max_{1\le i\le N}X_i
        +
        \kappa\sqrt{\log N}
    \right)
    \le
    N^{-\frac{\kappa^2}{2-\alpha^2}+o(1)}.
\]
Since $\max_{1\le i\le N}X_i$ has a continuous distribution,
\[
    \mathbb P\left(
        \max_{1\le i\le N}X_i
        \ge
        \mathbb E\max_{1\le i\le N}X_i
        +
        \kappa\sqrt{\log N}
    \right)
    =
    \mathbb P\left(
        \max_{1\le i\le N}X_i
        >
        \mathbb E\max_{1\le i\le N}X_i
        +
        \kappa\sqrt{\log N}
    \right).
\]

\end{proof}

\section{Critical SK free energy fluctuations}\label{sec:improved-critical-variance}


\subsection{Proof of upper bound}\label{subsec:critical-upper-bound}

\medskip
\noindent

Before the proof of upper bound of Theorem \ref{thm:improved-critical}, we introduce some preliminary. We first recall a Gaussian convexity principle of Wei-Kuo Chen that relates the variance to an entropy quantity arising from exponential tilting.

\begin{theorem}[Wei-Kuo Chen, {\cite[Theorem~1]{Chen2024GaussianConvexity}}]\label{thm:Chen}
Let $G$ be a standard Gaussian vector in $\R^m$, and let $\Psi:\R^m\to\R$ be convex. Assume that for every $s>0$, $\E e^{s\Psi(G)}<\infty$. Define
\[
\mathcal K_\Psi(s)=
\begin{cases}
\dfrac1s\log \E e^{s\Psi(G)}, & s \neq 0,\\[1ex]
\E\Psi(G), & s=0.
\end{cases}
\]
Then $\mathcal K$ is convex on $\mathbb{R}$.
\end{theorem}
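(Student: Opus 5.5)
The plan is to reduce convexity of $\mathcal K_\Psi$ to a pointwise second-derivative inequality. Away from $s=0$ this inequality becomes a statement about the variance of $\Psi$ under the exponentially tilted Gaussian measure, and the Brascamp--Lieb (Gaussian Poincaré) inequality for log-concave perturbations will supply it. I will treat $s\neq0$ first and then patch $s=0$ by continuity.

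\textbf{Smooth, well-behaved case.} First assume $\Psi$ is smooth, convex, and grows at most quadratically with $\E e^{s\Psi(G)}<\infty$ for all real $s$. Let $\Lambda(s)=\log\E e^{s\Psi(G)}$, so $\mathcal K(s)=\Lambda(s)/s$. A direct computation gives
\[
s^3\mathcal K''(s)=s^2\Lambda''(s)-2s\Lambda'(s)+2\Lambda(s)=: g(s).
\]
We have $g(0)=0$ and $g'(s)=s^2\Lambda'''(s)$. Since $\Lambda''(s)=\Var_s(\Psi)$ under the tilted measure $\mu_s\propto e^{s\Psi}\,d\gamma$, it suffices to show that $s\mapsto \Var_s(\Psi)$ is nondecreasing on $\R$. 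Then $\Lambda'''\ge 0$, so $g$ is nondecreasing with $g(0)=0$, and $g(s)$ has the sign of $s$. This yields $\mathcal K''\ge0$ on both half-lines.

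\textbf{Main obstacle: monotonicity of $\Var_s(\Psi)$.} I would compute
\[
\Lambda'''(s)=\E_s\bigl[(\Psi-\E_s\Psi)^3\bigr].
\]
So the task is to show that the third central moment of $\Psi$ under $\mu_s$ is nonnegative. My first attempt writes $\Var_s(\Psi)=\E_s[\nabla\Psi\cdot L_s^{-1}\nabla\Psi]$ (a Helffer--Sjöstrand-type representation), where $L_s$ is the generator associated with $\mu_s$.

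For $s\ge0$ the measure $\mu_s$ is more log-concave than $\gamma$, and convexity of $\Psi$ should make the variance increase, though this is delicate. An alternative route for the third moment is to apply Gaussian integration by parts twice, obtaining
\[
\E_s[(\Psi-c)^3]=\E_s\bigl[3(\Psi-c)\,|\nabla\Psi|^2\bigr]+\cdots,
\]
and then use convexity via $\nabla^2\Psi\ge0$ together with a Brascamp--Lieb step to control the sign.

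If neither route closes, I would fall back to Ehrhard/Borell-type arguments. The main tool there would be the Prékopa--Leindler inequality applied to $e^{s\Psi}$ along the interpolation $G=\sqrt{\theta}\,G_1+\sqrt{1-\theta}\,G_2$, which directly compares $\Lambda$ at scaled parameters. This is the step I expect to be the hardest.

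\textbf{Approximation and $s=0$.} For general convex $\Psi$, I would approximate it by smooth convex functions, for example $\Psi\ast\gamma_\varepsilon$ plus truncations that preserve convexity, such as maxima of finitely many affine minorants. The finiteness assumption gives convergence of $\Lambda$ by dominated or monotone convergence. Pointwise limits of convex functions are convex, so convexity passes to the limit. Finally, $\Lambda(s)/s\to\Lambda'(0)=\E\Psi(G)$ as $s\to0$, so the definition of $\mathcal K$ at $0$ makes it continuous. Convexity on $\R\setminus\{0\}$ together with $C^1$ regularity across $0$ (from the expansion $\mathcal K(s)=\E\Psi+\tfrac s2\Var\Psi+O(s^2)$) then gives convexity on all of $\R$.
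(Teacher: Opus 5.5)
Your reduction is correct: with $\Lambda(s)=\log\E e^{s\Psi(G)}$ one has $s^3\mathcal K''(s)=g(s)$, $g(0)=0$ and $g'(s)=s^2\Lambda'''(s)$. The gap is the sufficient condition you then commit to. You aim to show that $s\mapsto \Var_{\mu_s}(\Psi)=\Lambda''(s)$ is nondecreasing on $\R$, i.e.\ that $\E_{\mu_s}[(\Psi-\E_{\mu_s}\Psi)^3]\ge 0$ for every $s$. This is false, so no Helffer--Sj\"ostrand, integration-by-parts or Brascamp--Lieb argument can deliver it.

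A counterexample: take $m=1$ and $\Psi(x)=\max(x,0)$, which is convex and has all exponential moments. Then $\E e^{s\Psi(G)}=\tfrac12+e^{s^2/2}\Phi(s)$, so
\[
\Lambda(s)=\frac{s^2}{2}+\eta(s),\qquad \eta(s)=\log\Bigl(1+\tfrac12 e^{-s^2/2}-\bar\Phi(s)\Bigr),
\]
and $\eta''(s)=\bigl(\tfrac{s^2}{2}+O(s)\bigr)e^{-s^2/2}>0$ for large $s$. Hence $\Lambda''(s)>1$ for all large $s$, while $\Lambda''(s)\to1$. Numerically $\Lambda''(3)\approx 1.031$ and $\Lambda''(4)\approx 1.002$, so $\Lambda'''<0$ somewhere in $(3,4)$. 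The reason is that under $\mu_s$ with $s$ large, $\Psi$ is essentially $\cN(s,1)$ plus an atom of mass about $\tfrac12 e^{-s^2/2}$ at $0$; this far-left outlier makes the third central moment negative.

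Equivalently, $g(s)\sim \tfrac{s^4}{2}e^{-s^2/2}$ is positive but tends to $0$, so $g$ cannot be nondecreasing. The theorem only asserts that $g(s)$ has the sign of $s$, which is strictly weaker than monotonicity of $g$. Two smaller points. For $s>0$ the weight $e^{s\Psi}$ is log-convex, so $\mu_s$ is less (not more) log-concave than the Gaussian, and Brascamp--Lieb is not available there. The Pr\'ekopa--Leindler fallback is too unspecified to assess. Your smoothing step and the $C^1$ gluing at $s=0$ are fine.

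The paper does not prove this statement; it imports it from Chen's work. One route that does close uses the Bou\'e--Dupuis formula $\log\E e^{F(B)}=\sup_v\E\bigl[F(B+\int_0^\cdot v_t\,dt)-\tfrac12\int_0^1|v_t|^2dt\bigr]$ with $G=B_1$.

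For $s>0$, substitute $v=sw$. This gives $\mathcal K(s)=\sup_w\bigl\{\E\Psi(B_1+s\int_0^1 w_t\,dt)-\tfrac s2\E\int_0^1|w_t|^2dt\bigr\}$. Each term is convex in $s$, because $\Psi$ is convex and the cost has become linear in $s$. A supremum of convex functions is convex, so $\mathcal K$ is convex on $(0,\infty)$.

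For $s<0$, dividing by $s$ turns the supremum into an infimum: $\mathcal K(s)=\inf_v\bigl\{\E\Psi(B_1+\int_0^1 v_t\,dt)+\tfrac{1}{2|s|}\E\int_0^1|v_t|^2dt\bigr\}$. This functional is jointly convex in $(s,v)$ on $\{s<0\}$. Indeed, $v\mapsto \E\Psi(B_1+\int_0^1 v_t\,dt)$ is convex, and $(s,v)\mapsto |v|^2/(2|s|)$ is a perspective function. A partial infimum of a jointly convex function over a convex set is convex, so $\mathcal K$ is convex on $(-\infty,0)$.

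In both half-lines, convexity of $\Psi$ enters through convexity along drift displacements, not through a sign condition on cumulants. Since $\Psi$ has an affine minorant, $\Lambda$ is finite on all of $\R$, so your $C^1$ gluing at $0$ completes the argument. What remains is the standard integrability justification of Bou\'e--Dupuis for unbounded functionals, for instance by treating Lipschitz convex $\Psi$ first and then approximating.
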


The next result is information-theoretic. Once the entropy quantity is represented as a Kullback--Leibler divergence along a Gaussian channel, its derivative can be computed using the I--MMSE formula.

\begin{theorem}[I-MMSE formula, Guo, Shamai and Verdú\cite{GuoShamaiVerdu2005}]\label{thm:IMSE}
Let $U$ be an $m$-dimensional random vector with $\E\|U\|^2<\infty$, let $G\sim \cN(0,I_m)$ be independent of $U$, and for $\gamma\ge 0$ set
\[
Y^{\gamma}=\sqrt\gamma\,U+G.
\]
Then the mutual information (measured in nats) satisfies
\[
\frac{d}{d\gamma} I(U;Y^{\gamma})=\frac12\,\E\bigl[\|U-\E[U\mid Y^{\gamma}]\|^2\bigr].
\]
\end{theorem}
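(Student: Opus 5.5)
The plan is to write the mutual information as a free energy and differentiate it by Gaussian integration by parts, using the Nishimori identity to simplify the result. I would first take $U$ bounded, say $\|U\|\le M$ almost surely, and remove this assumption at the end. Let $P$ be the law of $U$. For $y\in\R^m$ and $\gamma>0$ define the partition function
\[
Z_\gamma(y)=\int \exp\Bigl(\sqrt\gamma\,\langle u,y\rangle-\tfrac{\gamma}{2}\|u\|^2\Bigr)\,P(du).
\]
Then the density ratio is $\frac{p(y\mid u)}{p(y)}=\exp\bigl(\sqrt\gamma\langle u,y\rangle-\tfrac\gamma2\|u\|^2\bigr)/Z_\gamma(y)$. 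Taking expectations and using $\E\langle U,Y^\gamma\rangle=\sqrt\gamma\,\E\|U\|^2$ gives
\[
I(U;Y^\gamma)=\frac{\gamma}{2}\E\|U\|^2-\E\log Z_\gamma(Y^\gamma).
\]
The posterior is the Gibbs measure $\langle f\rangle_\gamma\propto\int f(u)\exp\bigl(\sqrt\gamma\langle u,Y\rangle-\tfrac\gamma2\|u\|^2\bigr)P(du)$. In particular $\langle u\rangle_\gamma=\E[U\mid Y^\gamma]$.

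Next I would differentiate $\E\log Z_\gamma(\sqrt\gamma U+G)$ in $\gamma$. Write the exponent as
\[
\gamma\langle u,U\rangle+\sqrt\gamma\langle u,G\rangle-\tfrac\gamma2\|u\|^2 .
\]
Its $\gamma$-derivative is
\[
\langle u,U\rangle+\tfrac{1}{2\sqrt\gamma}\langle u,G\rangle-\tfrac12\|u\|^2 .
\]
For $\gamma>0$ and bounded $U$, dominated convergence lets us differentiate under $\E$. This yields
\[
\frac{d}{d\gamma}\E\log Z_\gamma=\E\langle\langle u,U\rangle\rangle_\gamma+\frac1{2\sqrt\gamma}\E\langle\langle u,G\rangle\rangle_\gamma-\frac12\E\langle\|u\|^2\rangle_\gamma .
\]
For the middle term, Gaussian integration by parts in $G$ gives
\[
\frac1{2\sqrt\gamma}\E\langle\langle u,G\rangle\rangle_\gamma=\frac12\bigl(\E\langle\|u\|^2\rangle_\gamma-\E\|\langle u\rangle_\gamma\|^2\bigr),
\]
since the $G$-derivative of $\langle u_k\rangle_\gamma$ equals $\sqrt\gamma\,(\langle u_k^2\rangle_\gamma-\langle u_k\rangle_\gamma^2)$. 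For the first term, the Nishimori identity (the tower property, because $\langle\cdot\rangle_\gamma$ is exactly the conditional law of $U$ given $Y^\gamma$) gives $\E\langle\langle u,U\rangle\rangle_\gamma=\E\|\langle u\rangle_\gamma\|^2$. Summing the three terms,
\[
\frac{d}{d\gamma}\E\log Z_\gamma=\tfrac12\E\|\E[U\mid Y^\gamma]\|^2 .
\]
Hence
\[
\frac{d}{d\gamma}I(U;Y^\gamma)=\tfrac12\bigl(\E\|U\|^2-\E\|\E[U\mid Y^\gamma]\|^2\bigr)=\tfrac12\E\|U-\E[U\mid Y^\gamma]\|^2 ,
\]
where the last step is the orthogonality of $U-\E[U\mid Y^\gamma]$ to functions of $Y^\gamma$.

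The main obstacle is analytic: passing to general $U$ with $\E\|U\|^2<\infty$, and handling $\gamma=0$, where the $1/\sqrt\gamma$ factor appears. At $\gamma=0$ I would rather apply integration by parts before differentiating, which removes the singular factor. Alternatively, one can verify directly that $I(U;Y^\gamma)/\gamma\to\frac12\Var$-type quantity, namely $\frac12\E\|U-\E U\|^2$, by expanding to second order.

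For unbounded $U$, my first attempt would be to work with the integrated form
\[
I(U;Y^\gamma)=\tfrac12\int_0^\gamma \mathrm{mmse}(s)\,ds .
\]
This form is proved for truncations $U_M=U\1_{\|U\|\le M}$. I would then pass to the limit $M\to\infty$. On the left, this uses lower semicontinuity plus the data-processing bound $I(U_M;Y)\le\dots$, together with $I(U;Y^\gamma)\le\frac\gamma2\E\|U\|^2$ and convergence of the Gaussian channel output. On the right, it uses $L^2$-continuity of the MMSE under $U_M\to U$. Finally, continuity of $s\mapsto\mathrm{mmse}(s)$ upgrades the integrated identity to the stated pointwise derivative.
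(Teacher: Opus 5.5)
The paper gives no proof of this statement; it imports it from \cite{GuoShamaiVerdu2005}, whose main argument is the incremental channel. Adjoining an independent observation $\sqrt\delta\,U+G'$ to $Y^\gamma$ is equivalent to observing $Y^{\gamma+\delta}$. Hence $I(U;Y^{\gamma+\delta})-I(U;Y^\gamma)=I(U;\sqrt\delta\,U+G'\mid Y^\gamma)$ is a low-SNR Gaussian channel with input law $P_{U\mid Y^\gamma}$. A low-SNR expansion $I(X;\sqrt\delta X+G')=\frac\delta2\E\|X-\E X\|^2+o(\delta)$, applied conditionally, then gives $\frac\delta2\,\mathrm{mmse}(\gamma)+o(\delta)$.

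Your route is different and sound. You write $I(U;Y^\gamma)=\frac\gamma2\E\|U\|^2-\E\log Z_\gamma(Y^\gamma)$, differentiate the free energy, convert the singular $\frac1{2\sqrt\gamma}$ term by Gaussian integration by parts, and close with the Nishimori (tower) identity. All of this checks out for bounded $U$ and $\gamma>0$. The incremental-channel proof covers every finite-second-moment input at once, but it rests on a low-SNR lemma that must be uniform enough to integrate over the conditioning. Yours is elementary, and it actually fits better with how the paper uses the result. With $P$ the law of $U=(\theta_i\theta_j)_{i<j}$ and $\gamma=\lambda^2/N$, one has $Z_\gamma(y)=\frac{d\nu_\lambda}{d\nu_0}(y)$. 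So your intermediate identity $\frac{d}{d\gamma}\E\log Z_\gamma(Y^\gamma)=\frac12\E\|\E[U\mid Y^\gamma]\|^2$ is exactly $\frac{d}{d\gamma}\KL(\nu_\lambda\|\nu_0)=\frac12\sum_{i<j}\E\bigl[m_{ij}^{(\lambda)}(Y)^2\bigr]$, which is the content of Proposition~\ref{prop:derivative}. Since $U$ is bounded there and only $\lambda\in[\lambda_0,1]$ is needed, your bounded case suffices, and the detour through $I_N(\gamma)$ and \eqref{eq:DplusI} becomes unnecessary.

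Your extension to general $U$ is only sketched, and three points need filling in.

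\emph{Truncation can be avoided.} By Cauchy--Schwarz and the Nishimori identity, $\E|\partial_s\log Z_s(Y^s)|\le\frac32\E\|U\|^2+\frac{\sqrt{m\E\|U\|^2}}{2\sqrt s}$, which is integrable near $s=0$. So by Fubini your computation gives $I(U;Y^\gamma)=\frac12\int_0^\gamma\mathrm{mmse}(s)\,ds$ directly for every $U$ with $\E\|U\|^2<\infty$.

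\emph{If you keep the truncation,} the bound you left unspecified should be $I(U_M;\sqrt\gamma\,U_M+G)\le I(U;Y^\gamma)+H(\1_{\{\|U\|\le M\}})$. This follows from the chain rule with $B=\1_{\{\|U\|\le M\}}$, since $U_M=U$ on $\{B=1\}$ and $U_M=0$ on $\{B=0\}$; together with lower semicontinuity it gives convergence of the left side. A data-processing bound on $I(U_M;Y^\gamma)$ would concern the wrong channel.

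\emph{Continuity of mmse.} Either way, passing to the pointwise derivative requires continuity of $s\mapsto\mathrm{mmse}(s)$ on $[0,\infty)$, which you assert without proof. It holds because the posterior mean $\langle u\rangle_s(y)$ is jointly continuous in $(s,y)$, by dominated convergence with the local envelope $\|u\|e^{\|y\|^2/2}$. In addition, the family $\|\E[U\mid Y^s]\|^2$, $s\ge0$, is uniformly integrable, being dominated by $\E[\|U\|^2\mid Y^s]$. Vitali's theorem under the coupling $Y^s=\sqrt s\,U+G$ then gives continuity, and this also disposes of $\gamma=0$.
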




We also need an information percolation comparison that bounds conditional $\chi^2$-information by connectivity in a bond percolation model from Abbe and Boix-Adserà. Recall the conditional $\chi^2$-mutual information: if $(A,B,Y)$ are random variables, then
\[
\Ichi(A;B\mid Y)=\E\Bigl[\chi^2\bigl(P_{A,B\mid Y}, P_{A\mid Y}P_{B\mid Y}\bigr)\Bigr].
\]

\begin{definition}[Definition 3.5\cite{AbbeBoix2020}] Let \(\mathcal{G}=(V,E)\) be a finite graph. For each edge \(e=\{i,j\}\in E\), let \(Q_e(\cdot\mid\cdot)\)   be a probability kernel from the binary input alphabet \(\{-1,+1\}\) to a measurable output space \((\mathsf Y_e,\mathcal A_e)\). We write $Q_{e,+}(\cdot)=Q_e(\cdot\mid +1),  Q_{e,-}(\cdot)=Q_e(\cdot\mid -1).$ The associated graphical channel is the channel from \(\{-1,+1\}^V\) to \(\prod_{e\in E}\mathsf Y_e\) defined as follows: given \(x=(x_v)_{v\in V}\in\{-1,+1\}^V\), the edge observations \((Y_e)_{e\in E}\) are conditionally independent, and for \(e=\{i,j\}\), $Y_e \sim Q_e(\cdot\mid x_i x_j).$ We say that the edge channel \(Q_e\) is symmetric if there exists a measurable involution $T_e:\mathsf Y_e\to \mathsf Y_e, T_e^{-1}=T_e,$ such that for every \(A\in\mathcal A_e\), \[ Q_e(A\mid +1)=Q_e(T_e(A)\mid -1). \] Equivalently, since \(T_e\) is an involution, \[ Q_e(T_e(A)\mid +1)=Q_e(A\mid -1), \qquad A\in\mathcal A_e. \] The graphical channel is called symmetric if \(Q_e\) is symmetric for every \(e\in E\). \end{definition}

\begin{theorem}[Theorem~3.6, Abbe--Boix-Adser, \cite{AbbeBoix2020}]\label{thm:AB}
Consider a binary synchronization model on a finite graph \(\mathcal{G}=(V,E)\): the hidden vertex labels \((\theta_v)_{v\in V}\) are i.i.d. uniform on \(\{-1,+1\}\), and conditional on \(\theta\), the edge observations \((Y_e)_{e\in E}\) are generated by a symmetric graphical channel. Then, for every \(u,v\in V\), \[ I_2(\theta_u;\theta_v\mid Y) \le \mathbb P\bigl( u \leftrightarrow v \text{ in bond percolation on }\mathcal{G} \text{ with edge-open probabilities }(p_e)_{e\in E} \bigr), \] where $p_e=I_2(\theta_i;\theta_j\mid Y_e), e=\{i,j\}\in E. $ Here \(I_2\) denotes \(\chi^2\)-mutual information, and the bond percolation on \(\mathcal{G}\) is the independent percolation process in which each edge \(e\) is open with probability \(p_e\).
\end{theorem}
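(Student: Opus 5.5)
The statement is Theorem~3.6 of Abbe--Boix-Adserà, quoted from \cite{AbbeBoix2020}; I would prove it by induction on the number of edges $|E|$, matching the deletion--contraction recursion of percolation connection probabilities. For percolation with edge-open probabilities $(p_e)$ and any fixed edge $e=\{i,j\}$,
\[
\Pbb_{\mathcal G}(u\leftrightarrow v)=(1-p_e)\,\Pbb_{\mathcal G- e}(u\leftrightarrow v)+p_e\,\Pbb_{\mathcal G/ e}(u\leftrightarrow v),
\]
where $\mathcal G-e$ deletes $e$ and $\mathcal G/e$ contracts it. On the information side:
\begin{itemize}
\item deleting $e$ corresponds to an erasure channel on $e$, so $Y_e$ carries no information;
\item contracting $e$ corresponds to a perfect channel revealing $\theta_i\theta_j$, which identifies the labels up to the global symmetry.
\end{itemize}
The base case, with no edges, is trivial: $I_2(\theta_u;\theta_v\mid Y)=\1_{u=v}$, since $\theta_u,\theta_v$ are independent uniform signs when $u\neq v$. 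It therefore suffices to prove
\begin{equation*}
I_2(\theta_u;\theta_v\mid Y)\le (1-p_e)\,I_2^{\mathcal G-e}(\theta_u;\theta_v\mid Y_{-e})+p_e\,I_2^{\mathcal G/e}(\theta_u;\theta_v\mid Y_{-e}),
\end{equation*}
and then to apply the induction hypothesis to both smaller graphs, whose remaining edge channels are unchanged.

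\textbf{Step 1: rewrite $I_2$ as a squared posterior correlation.} For uniform $\pm1$ variables whose joint conditional law is invariant under the global flip, which holds by symmetry of the channels, a direct computation gives
\[
I_2(\theta_u;\theta_v\mid Y)=\E\bigl[\langle\theta_u\theta_v\rangle_Y^2\bigr].
\]
Here $\langle\cdot\rangle_Y$ denotes the posterior mean. The same identity gives $p_e=\E[\langle\theta_i\theta_j\rangle_{Y_e}^2]$.

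\textbf{Step 2: reduce each symmetric edge channel to binary symmetric channels.} Using the involution $T_e$, every symmetric binary-input channel is a mixture of binary symmetric channels. The channel first draws a reliability label $L$, independent of the input, and then outputs $x_ix_j$ flipped with probability $\varepsilon_L$. Write $r_L=1-2\varepsilon_L$. Conditioning on $L$ is harmless, and then $p_e=\E[r_L^2]$. It is therefore enough to treat a single binary symmetric edge with correlation parameter $r$, where $p_e=r^2$, and afterwards average over $L$.

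\textbf{Step 3: compute the effect of one edge.} Condition on $Y_{-e}$. The posterior of $(\theta_u\theta_v,\theta_i\theta_j)$ is then described by a few flip-invariant weights. Adding a binary symmetric observation $Z$ of $\theta_i\theta_j$ multiplies the relevant weights by $1\pm rZ$. This yields an explicit rational expression for $\langle\theta_u\theta_v\rangle_Y$ in terms of $r$, $Z$ and the $Y_{-e}$-posterior quantities. Averaging over $Z$ gives $\E[\langle\theta_u\theta_v\rangle^2\mid Y_{-e}]=:\phi(r)$. At $r=0$ this is the deleted-graph value; at $r=1$ it is the contracted-graph value.

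\textbf{Step 4 (main obstacle): the chord inequality.} The remaining claim is $\phi(r)\le (1-r^2)\phi(0)+r^2\phi(1)$, that is, convexity of $\phi$ as a function of $s=r^2$ (or at least that $\phi$ lies below its chord). I would first try to show it by writing $\phi$ as an expectation of terms $\frac{(a+rb)^2}{1+rc}$, symmetrized over $\pm r$. One can then check directly that the symmetrized function is convex in $r^2$ on $[0,1]$, using that $|a|,|b|,|c|\le 1$ for posterior correlations. If that direct check fails, the fallback is to prove the inequality at the level of the whole channel: show that among symmetric channels with fixed $I_2=p_e$, the binary erasure mixture ``perfect with probability $p_e$, erased otherwise'' is the least degraded for this functional. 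Finally, averaging the inequality over $L$ and over $Y_{-e}$ closes the induction.
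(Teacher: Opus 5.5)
The paper does not prove this statement. It quotes it from \cite{AbbeBoix2020} and uses it only as a black box in Corollary~\ref{cor:AB-specialized}, so there is no internal argument to compare with. Judged on its own, your deletion--contraction induction is a correct, self-contained proof. The step you flag as the main obstacle does close by your ``direct check''.

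The fallback is unnecessary: it is just the $L$-averaged form of the same inequality. Write $A=\theta_u\theta_v$, $B=\theta_i\theta_j$, and let $a=\langle A\rangle$, $b=\langle AB\rangle$, $c=\langle B\rangle$ be posterior means given $Y_{-e}$. Then
\[
\phi(r)=\frac12\sum_{z=\pm1}\frac{(a+rzb)^2}{1+rzc}=\frac{a^2+r^2\,(b^2-2abc)}{1-r^2c^2},
\]
and a short computation gives the exact identity
\[
(1-r^2)\,\phi(0)+r^2\,\phi(1)-\phi(r)=\frac{r^2(1-r^2)\,c^2\,(b-ac)^2}{(1-c^2)(1-r^2c^2)}\ \ge\ 0,\qquad |c|<1.
\]
If $|c|=1$, then $B$ is a.s.\ constant under the posterior, so $b=ac$ and $\phi\equiv a^2$. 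Thus the sign comes from the perfect square $(b-ac)^2$, not from the bounds $|a|,|b|\le 1$. Since $\phi(0)$ and $\phi(1)$ do not depend on the reliability label $L$, averaging over $L$ and then over $Y_{-e}$ gives the recursion with $p_e=\E[r_L^2]$.

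A few points to tighten when you write it up:

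\textbf{Multigraphs.} Run the induction over multigraphs. Contraction creates parallel edges and eventually self-loops. A self-loop observes $\theta_w^2=1$, so it can be discarded on both the information side and the percolation side.

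\textbf{Contracted model.} Conditioning on $\theta_i\theta_j=-1$ turns the channels on edges formerly incident to $j$ into $x\mapsto Q_e(\cdot\mid -x)$. That is the channel $Q_e(\cdot\mid x)$ followed by $T_e$. This second use of channel symmetry is what makes the remaining channels ``unchanged''.

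\textbf{Source of flip invariance.} The global-flip invariance in Step~1 comes from the likelihood depending on $\theta$ only through edge products, together with the uniform prior. It does not come from channel symmetry.

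\textbf{Step~2.} This is cleanest via the log-likelihood ratio $\ell(Y_e)$. It is a sufficient statistic, and $L=|\ell|$ has the same law under both inputs. Given $L=t$, the sign of $\ell$ is a binary symmetric observation with $r=\tanh(t/2)$.
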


The main idea of the proof can be summarized as follows. We first convert the upper bound of variance of $F_N(\beta_c)$ into an entropy quantity $B_N(\beta_c)$, and then, using a Hamiltonian decomposition to obatin a diagonal contribution plus a SK off-diagonal contribution, $\widetilde{B}_N(1)$. Subsequently, the I-MMSE relation in Theorem \ref{thm:IMSE} is employed to express the derivative of $\widetilde{B}_N(\lambda)$ as the squared edge correlation. By invoking the Abbe-Boix percolation bounds in Theorem \ref{thm:AB}, it is shown that this derivative is at most $O(N^{1/3})$ within the critical window. Therefore, integrating from $\lambda_0 = 1 - N^{-1/3}$ to $1$ incurs only an $O(1)$ loss, ultimately yielding $\operatorname{Var}(F_N(\beta_c)) \le \frac{1}{6}\log N + O(1)$.

For fixed $\beta>0$, define $\Lambda_{N,\beta}(s)=\log \E e^{sF_N(\beta)},$
and then $$\mathcal K_{N,\beta}(s)=
\begin{cases}
\dfrac{\Lambda_{N,\beta}(s)}{s}, & s>0,\\[1ex]
\E F_N(\beta), & s=0.
\end{cases}$$

Then we ensure that the $\mathcal K_{N,\beta}$ is convex on \([0,\infty)\) based on Theorem \ref{thm:Chen}.


\begin{lemma}\label{lem:FNconvex}
For every $N\ge 1$, $s\in\R$ and $\beta>0$, the map $g\mapsto F_N(\beta;g)$ is convex on $\R^{N^2}$ and satisfies $\E e^{sF_N(\beta)}<\infty.$ Consequently, $\mathcal K_{N,\beta}$ is convex on $[0,\infty)$.
\end{lemma}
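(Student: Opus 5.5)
The plan is to verify the two hypotheses of Theorem~\ref{thm:Chen} with $m=N^2$ and $\Psi(g)=F_N(\beta;g)$, and then read off convexity of $\mathcal K_{N,\beta}$ on $[0,\infty)$ as the restriction of $\mathcal K_\Psi$. The identification is immediate: $\Lambda_{N,\beta}(s)/s=\frac1s\log\E e^{sF_N(\beta)}$ for $s>0$, and both functions equal $\E F_N(\beta)$ at $s=0$.

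\emph{Convexity.} For each fixed $\sigma\in\Sigma_N$, the map $g\mapsto \beta H_N(\sigma;g)=\frac{\beta}{\sqrt N}\sum_{i,j}g_{ij}\sigma_i\sigma_j$ is linear in $g$. Hence $F_N(\beta;g)=\log\sum_{\sigma}e^{\beta H_N(\sigma;g)}$ is the log-sum-exp function composed with an affine map $\R^{N^2}\to\R^{2^N}$. Log-sum-exp is convex, and one can see this directly from H\"older's inequality:
\[
\sum_\sigma e^{\theta a_\sigma+(1-\theta)b_\sigma}\le\Bigl(\sum_\sigma e^{a_\sigma}\Bigr)^\theta\Bigl(\sum_\sigma e^{b_\sigma}\Bigr)^{1-\theta}.
\]
Therefore $g\mapsto F_N(\beta;g)$ is convex.

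\emph{Integrability.} We use two-sided deterministic bounds on $F_N$. Write $\|g\|_1=\sum_{i,j}|g_{ij}|$. Since $|H_N(\sigma;g)|\le \|g\|_1/\sqrt N$ for every $\sigma$, we get
\[
-\frac{\beta}{\sqrt N}\|g\|_1\le F_N(\beta;g)-N\log 2\le \frac{\beta}{\sqrt N}\|g\|_1 .
\]
Consequently, for every $s\in\R$,
\[
e^{sF_N(\beta)}\le 2^{sN}\,e^{|s|\beta\|g\|_1/\sqrt N}.
\]
The expectation of the right-hand side factorizes over the independent entries as $2^{sN}\bigl(\E e^{c|g_{11}|}\bigr)^{N^2}$ with $c=|s|\beta/\sqrt N$. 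Each factor satisfies $\E e^{c|g_{11}|}\le 2e^{c^2/2}<\infty$, so $\E e^{sF_N(\beta)}<\infty$ for all real $s$. In particular the hypothesis of Theorem~\ref{thm:Chen} holds for all $s>0$.

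\emph{Conclusion.} Theorem~\ref{thm:Chen} now gives that $\mathcal K_\Psi$ is convex on $\R$, and its restriction to $[0,\infty)$ is exactly $\mathcal K_{N,\beta}$. No real obstacle is expected. The only point to check is that $\mathcal K_{N,\beta}(0)=\E F_N(\beta)$ matches the convention in Theorem~\ref{thm:Chen}, which it does by definition; it is also the continuous extension, since $\Lambda_{N,\beta}(s)/s\to\E F_N(\beta)$ as $s\downarrow 0$ by the exponential integrability just established.
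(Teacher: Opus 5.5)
Your proof is correct and follows the paper's argument: convexity of log-sum-exp composed with an affine map, integrability from a deterministic bound in terms of $\sum_{i,j}|g_{ij}|$, and then Theorem~\ref{thm:Chen}. Your treatment of $s<0$ is cleaner than the paper's, which speaks of "applying the upper bound to $-F_N(\beta)$". That case actually needs the lower bound $F_N(\beta)\ge N\log 2-\beta\|g\|_1/\sqrt N$, and you state it explicitly.
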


\begin{proof}
For each fixed $\sigma\in\Sigma_N$, the map $g\mapsto \frac{\beta}{\sqrt N}\sum_{i,j=1}^N g_{ij}\sigma_i\sigma_j$ is affine, hence 
\[
F_N(\beta;g)=\log\sum_{\sigma\in\Sigma_N}\exp\!\Bigl(\frac{\beta}{\sqrt N}\sum_{i,j=1}^N g_{ij}\sigma_i\sigma_j\Bigr)
\]
is convex. Moreover,
\[
F_N(\beta;g)
\le N\log 2 + \frac{\beta}{\sqrt N}\sum_{i,j=1}^N |g_{ij}|,
\]
therefore, for every $s>0$,
\[
\E e^{sF_N(\beta)}
\le e^{sN\log 2}\prod_{i,j=1}^N \E e^{s\beta |g_{ij}|/\sqrt N}
<\infty.
\]
The same upper bound also yields finiteness for $s<0$ after applying it to $-F_N(\beta)$.


\end{proof}

Applying the Lemma~\ref{lem:FNconvex} to $F_N(\beta)$ yields the following entropy bound.

%

\begin{lemma}\label{prop:forwardKL}
For every $N\ge 1$ and every $\beta>0$,
\[
\Var\bigl(F_N(\beta)\bigr)\le\frac{\E\!\left[ Z_N(\beta) F_N(\beta) \right]}
         {\E Z_N(\beta)}
    -
    \log \E Z_N(\beta):= 2B_N(\beta),
\]

\end{lemma}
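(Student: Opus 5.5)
The plan is to exploit the convexity of $\mathcal K_{N,\beta}$ on $[0,\infty)$ from Lemma~\ref{lem:FNconvex}, comparing its value at $s=1$ with its tangent line at $s=0$ and with its slope at $s=1$. Write $\Lambda=\Lambda_{N,\beta}$ and $\mathcal K=\mathcal K_{N,\beta}$. Since $F_N$ has exponential moments of all orders, $\Lambda$ is smooth near $[0,1]$. Its first derivatives at $0$ are
\[
\Lambda(0)=0,\qquad \Lambda'(0)=\E F_N(\beta),\qquad \Lambda''(0)=\Var(F_N(\beta)).
\]
Hence $\mathcal K(s)=\Lambda(s)/s$ is smooth at $0$, with $\mathcal K(0)=\E F_N$ and $\mathcal K'(0)=\tfrac12\Lambda''(0)=\tfrac12\Var(F_N)$. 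This follows from the Taylor expansion $\Lambda(s)=s\E F_N+\tfrac{s^2}{2}\Var F_N+O(s^3)$.

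Next I compute the slope at $s=1$. From $\mathcal K'(s)=\Lambda'(s)/s-\Lambda(s)/s^2$, evaluating at $s=1$ gives
\[
\mathcal K'(1)=\Lambda'(1)-\Lambda(1)=\frac{\E[Z_N F_N]}{\E Z_N}-\log \E Z_N=2B_N(\beta).
\]
Here I use $e^{F_N}=Z_N$, so $\Lambda(1)=\log\E Z_N$ and $\Lambda'(1)=\E[Z_NF_N]/\E Z_N$.

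Convexity of $\mathcal K$ on $[0,\infty)$ makes $\mathcal K'$ nondecreasing there. Therefore
\[
\tfrac12\Var(F_N)=\mathcal K'(0)\le \mathcal K'(1)=2B_N,
\]
which actually gives $\Var(F_N)\le 4B_N$. To recover the sharper stated constant, I instead write $\mathcal K'(1)$ itself as a variance-type quantity and compare it directly. A cleaner route uses a chord combined with the tangent at $1$. Convexity gives $\mathcal K(0)\ge \mathcal K(1)-\mathcal K'(1)$, that is,
\[
\E F_N\ge \log\E Z_N-2B_N=2\log \E Z_N-\frac{\E[Z_NF_N]}{\E Z_N}.
\]
This relates the tilted mean to $\E F_N$ but does not by itself produce the variance.

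So the key step is to obtain the factor $1$ rather than $\tfrac12$ in front of $\Var$. I would try to show that $s\mapsto \mathcal K'(s)$ dominates $\Lambda''(0)$ in a stronger sense. The relevant identity is
\[
\mathcal K'(1)=\int_0^1 s\,\Lambda''(s)\,ds,
\]
obtained by integrating by parts $\Lambda'(1)-\Lambda(1)=\int_0^1(\Lambda'(1)-\Lambda'(s))\,ds$. Convexity of $\mathcal K$ alone only yields $\int_0^1 s\Lambda''\ge \tfrac12\Lambda''(0)$.

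This constant is the main obstacle. My first attempt would be to check whether the paper's normalization intends $2B_N$ to be $\mathcal K'(1)$ up to the factor convention. If it is not, I would supplement the argument with Gaussian log-concavity: $\Lambda''(s)$ equals the variance of $F_N$ under the $e^{sF_N}$-tilted Gaussian measure. By Brascamp--Lieb/Caffarelli-type comparison, or via Chen's principle applied to $\Psi$ combined with monotonicity of the tilted variance in $s$ for convex $\Psi$, one would then get $\Lambda''(s)\ge\Lambda''(0)$ and hence $\mathcal K'(1)\ge \tfrac12\Var(F_N)$. This still leaves the factor of $2$ to be reconciled with the definition of $B_N$, which is what I would scrutinize first.
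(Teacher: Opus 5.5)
Your first paragraph together with the monotonicity step is exactly the paper's proof, and it is already complete. Convexity of $\mathcal K_{N,\beta}$ on $[0,\infty)$, together with $\mathcal K_{N,\beta}'(0)=\tfrac12\Var(F_N(\beta))$ and $\mathcal K_{N,\beta}'(1)=\Lambda_{N,\beta}'(1)-\Lambda_{N,\beta}(1)$, gives
\[
\Var\bigl(F_N(\beta)\bigr)\le 2\left(\frac{\E[Z_N(\beta)F_N(\beta)]}{\E Z_N(\beta)}-\log\E Z_N(\beta)\right).
\]
This is precisely what the paper proves.

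The ``factor-of-two obstacle'' you then chase comes from reading the ``$:=2B_N(\beta)$'' in the display literally. That is a notational slip in the statement. Everywhere else in the paper, $B_N(\beta)$ denotes the difference $\E[Z_NF_N]/\E Z_N-\log\E Z_N$ itself:
it is computed that way in Lemma~\ref{prop:Bdecomp};
it is identified with $\widehat{\mathcal K}_N'(1)$ in the lower-bound section;
and the upper-bound proof uses the lemma in the form $\Var(F_N(\beta_c))\le 2B_N(\beta_c)$.
So your ``$\Var\le 4B_N$'' (in your normalization) is exactly the intended statement, and you could have stopped there.

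You should drop the proposed supplement, because the literal strengthening $\Var(F_N)\le\E[Z_NF_N]/\E Z_N-\log\E Z_N$ is false. For $N=1$ one has $H_1=g_{11}$, so $F_1=\log 2+\beta g_{11}$ and $\Var(F_1)=\beta^2$. On the other hand, $\E[Z_1F_1]/\E Z_1-\log\E Z_1=\beta^2/2$.

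More generally, for any affine (hence convex) functional of the Gaussian vector, $\Lambda$ is quadratic and $\Lambda'(1)-\Lambda(1)=\tfrac12\Var$ exactly. So no argument that uses only Gaussian convexity or log-concavity can improve the constant $2$. At $\beta=\beta_c$, the literal version combined with Proposition~\ref{prop:leftendpoint} and Lemma~\ref{prop:derivative-bound} would give $\Var(F_N(\beta_c))\le\frac1{12}\log N+O(1)$, contradicting Theorem~\ref{thm:critical-lower-bound}.

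As you observed yourself, a Brascamp--Lieb bound $\Lambda''(s)\ge\Lambda''(0)$ would only reproduce $\mathcal K'(1)\ge\tfrac12\Var$, i.e.\ the same factor.
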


\begin{proof}
Since $F_N(\beta)$ has exponential moments in a neighborhood of the origin, $\Lambda_{N,\beta}$ is near $0$, and note that $\Lambda_{N,\beta}(0)=0,$ $\Lambda_{N,\beta}'(0)=\E F_N(\beta),$ $\Lambda_{N,\beta}''(0)=\Var\bigl(F_N(\beta)\bigr).$
 
 Therefore, as $s\downarrow 0$,
\[
\Lambda_{N,\beta}(s)=s\E F_N(\beta)+\frac{s^2}{2}\Var\bigl(F_N(\beta)\bigr)+o(s^2),
\]
whence
\[
\mathcal K_{N,\beta}(s)=\E F_N(\beta)+\frac{s}{2}\Var\bigl(F_N(\beta)\bigr)+o(s),
\]
and thus
\begin{equation}
\begin{aligned}
\mathcal{K}_{N,\beta}'(0+) 
&= \lim_{s\downarrow 0} \frac{\mathcal{K}_{N,\beta}(s) - \mathcal{K}_{N,\beta}(0)}{s} \\
&= \lim_{s\downarrow 0} \frac{\left[\mathbb{E}F_N(\beta) + \frac{s}{2}\Var(F_N(\beta)) + o(s)\right] - \mathbb{E}F_N(\beta)}{s} \\
&= \frac{1}{2}\Var(F_N(\beta)).
\end{aligned}
\label{eq:phi0prime}
\end{equation}
Since $\mathcal K_{N,\beta}$ is convex on $[0,\infty)$, its derivative is monotone increasing, so
\[
\mathcal K_{N,\beta}'(0+)\le \mathcal K_{N,\beta}'(1).
\]
Note that $\mathcal K_{N,\beta}(s)=\Lambda_{N,\beta}(s)/s$ for $s>0$,
\[
\mathcal K_{N,\beta}'(1)=\Lambda_{N,\beta}'(1)-\Lambda_{N,\beta}(1).
\]
Using \eqref{eq:phi0prime}, 
\[
\frac12\Var\bigl(F_N(\beta)\bigr)
\le \Lambda_{N,\beta}'(1)-\Lambda_{N,\beta}(1).
\]
Finally,
\[
\Lambda_{N,\beta}(1)=\log \E e^{F_N(\beta)}=\log \E Z_N(\beta)
\]
and
\[
\Lambda_{N,\beta}'(1)=\frac{\E[F_N(\beta)e^{F_N(\beta)}]}{\E e^{F_N(\beta)}}
=\frac{\E[Z_N(\beta)F_N(\beta)]}{\E Z_N(\beta)}.
\]
\end{proof}

We decompose the representation of the SK Hamiltonian into its diagonal and off-diagonal parts. The diagonal term is independent of $\sigma$ and therefore contributes only a random shift common to all configurations. By combining the two couplings associated with each unordered pair, the off-diagonal part can be rewritten in the standard SK form with independent Gaussian couplings.


For $1\le i<j\le N$, let $\Delta_N=\frac1{\sqrt N}\sum_{i=1}^N g_{ii},$ $J_{ij}=\frac{g_{ij}+g_{ji}}{\sqrt2},$ and
\[
\wt H_N(\sigma;J)=\frac1{\sqrt N}\sum_{1\le i<j\le N} J_{ij}\sigma_i\sigma_j.
\]
Then $\Delta_N\sim \cN(0,1)$, the family $(J_{ij})_{1\le i<j\le N}$ consists of i.i.d. standard Gaussians, and it is independent of $\Delta_N$. Moreover,
\begin{equation}\label{eq:Hdecomp}
H_N(\sigma;g)=\Delta_N + \sqrt2\,\wt H_N(\sigma;J).
\end{equation}
Indeed,
\[
\sum_{i,j=1}^N g_{ij}\sigma_i\sigma_j
=\sum_{i=1}^N g_{ii} + \sum_{1\le i<j\le N} (g_{ij}+g_{ji})\sigma_i\sigma_j.
\]

Define the corresponding partition function and free energy $\wt Z_N(\lambda;J)=\sum_{\sigma\in\Sigma_N} e^{\lambda \wt H_N(\sigma;J)},$ $\wt F_N(\lambda;J)=\log \wt Z_N(\lambda;J).$ Then from \eqref{eq:Hdecomp}, with $\lambda=\sqrt2\,\beta$,
\begin{equation}\label{eq:Zdecomp}
Z_N(\beta;g)=e^{\beta\Delta_N}\wt Z_N(\lambda;J),
\qquad
F_N(\beta;g)=\beta\Delta_N + \wt F_N(\lambda;J).
\end{equation}
Define
\begin{equation}\label{eq:wtBdef}
\wt B_N(\lambda)=\frac{\E[\wt Z_N(\lambda)\wt F_N(\lambda)]}{\E \wt Z_N(\lambda)} - \log \E\wt Z_N(\lambda).
\end{equation}

\begin{lemma}\label{prop:Bdecomp}
For every $\beta>0$, with $\lambda=\sqrt2\,\beta$,
\[
B_N(\beta)=\frac{\beta^2}{2}+\wt B_N(\lambda).
\]
In particular,
\[
B_N(\beta_c)=\frac14 + \wt B_N(1).
\]

\end{lemma}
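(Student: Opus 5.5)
The plan is to compute both terms in the definition of $B_N(\beta)$ directly from the factorization \eqref{eq:Zdecomp}, $Z_N(\beta)=e^{\beta\Delta_N}\wt Z_N(\lambda;J)$ with $\lambda=\sqrt2\,\beta$. The only structural input is that $\Delta_N\sim\cN(0,1)$ is independent of $J=(J_{ij})_{i<j}$. This lets every expectation factor into a one-dimensional Gaussian computation in $\Delta_N$ times the corresponding quantity for $\wt Z_N$. All moments involved are finite by Lemma~\ref{lem:FNconvex}, or by the same argument applied to $\wt F_N$, so Fubini is justified throughout.

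\emph{Step 1: the normalizing term.} By independence and $\E e^{\beta\Delta_N}=e^{\beta^2/2}$,
\[
\E Z_N(\beta)=e^{\beta^2/2}\,\E\wt Z_N(\lambda),
\qquad
\log\E Z_N(\beta)=\frac{\beta^2}{2}+\log\E\wt Z_N(\lambda).
\]

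\emph{Step 2: the tilted mean.} Write $Z_NF_N=e^{\beta\Delta_N}\wt Z_N\,(\beta\Delta_N+\wt F_N)$ and split it into two pieces. The first piece factors as $\beta\,\E[\Delta_N e^{\beta\Delta_N}]\,\E\wt Z_N$. Gaussian integration by parts gives $\E[\Delta_N e^{\beta\Delta_N}]=\beta e^{\beta^2/2}$, so this piece equals $\beta^2 e^{\beta^2/2}\E\wt Z_N$. The second piece factors as $e^{\beta^2/2}\E[\wt Z_N\wt F_N]$. Dividing by $\E Z_N(\beta)$ from Step~1 gives
\[
\frac{\E[Z_N(\beta)F_N(\beta)]}{\E Z_N(\beta)}=\beta^2+\frac{\E[\wt Z_N(\lambda)\wt F_N(\lambda)]}{\E\wt Z_N(\lambda)}.
\]

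\emph{Step 3: subtract.} Subtracting Step~1 from Step~2 gives $\beta^2-\beta^2/2=\beta^2/2$ plus exactly the expression \eqref{eq:wtBdef}. Hence the tilted-entropy quantity $\frac{\E[Z_NF_N]}{\E Z_N}-\log\E Z_N$ equals $\frac{\beta^2}{2}+\wt B_N(\lambda)$. At $\beta_c=1/\sqrt2$ we have $\lambda=1$ and $\beta_c^2/2=\tfrac14$, which gives the stated ``in particular''.

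The computation is routine; the only point needing care is normalization. Lemma~\ref{prop:forwardKL} labels the tilted-entropy quantity as $2B_N(\beta)$. With that reading, Steps~1--3 would give $2B_N(\beta)=\beta^2/2+\wt B_N(\lambda)$, which is not the displayed identity. The stated formula $B_N(\beta)=\beta^2/2+\wt B_N(\lambda)$ holds if $B_N$ denotes the full difference itself, the same normalization used for $\wt B_N$ in \eqref{eq:wtBdef}. I will write the proof with that reading, so that the identity is literally true. In the downstream variance bound, the bound then reads $\Var(F_N)\le B_N$, where $B_N$ is the full difference, and the constant $\tfrac14$ is carried accordingly.
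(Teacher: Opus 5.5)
Your Steps 1--3 match the paper's proof: factor $Z_N(\beta)=e^{\beta\Delta_N}\wt Z_N(\lambda)$, use independence and the two Gaussian identities, then subtract. Your reading of $B_N(\beta)$ as the full difference $\frac{\E[Z_NF_N]}{\E Z_N}-\log\E Z_N$ is the one the paper actually uses, both here and in $\widehat{\mathcal K}_N'(1)=B_N(\beta_c)$ in the lower-bound section.

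Your closing remark about the downstream bound is wrong, though it does not affect the lemma. The proof of Lemma~\ref{prop:forwardKL} shows $\tfrac12\Var(F_N(\beta))\le\Lambda_{N,\beta}'(1)-\Lambda_{N,\beta}(1)=B_N(\beta)$, that is, $\Var(F_N(\beta))\le 2B_N(\beta)$, which is how the upper-bound proof uses it. The ``$:=2B_N(\beta)$'' in that lemma's display is a misplaced factor of $2$. The bound $\Var(F_N)\le B_N$ is not what convexity gives, and it would force $\Var(F_N(\beta_c))\le\frac1{12}\log N+O(1)$, contradicting the lower bound.
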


\begin{proof}
Using \eqref{eq:Zdecomp} and the independence of $\Delta_N$ and $J$,
\begin{align*}
\frac{\E[Z_N(\beta)F_N(\beta)]}{\E Z_N(\beta)}
&=
\frac{\E\bigl[e^{\beta\Delta_N}\wt Z_N(\lambda)(\beta\Delta_N+\wt F_N(\lambda))\bigr]}
{\E e^{\beta\Delta_N}\, \E \wt Z_N(\lambda)}\\
&=
\beta\,\frac{\E[\Delta_N e^{\beta\Delta_N}]}{\E e^{\beta\Delta_N}}
+
\frac{\E[\wt Z_N(\lambda)\wt F_N(\lambda)]}{\E \wt Z_N(\lambda)}.
\end{align*}
Since $\Delta_N\sim \cN(0,1)$,
\[
\E e^{\beta\Delta_N}=e^{\beta^2/2},
\qquad
\E[\Delta_N e^{\beta\Delta_N}]=\beta e^{\beta^2/2},
\]
so
\[
\beta\,\frac{\E[\Delta_N e^{\beta\Delta_N}]}{\E e^{\beta\Delta_N}}=\beta \frac{\beta e^{\beta^2/2}}{e^{\beta^2/2}}=\beta^2.
\]
Subtracting
\[
\log \E Z_N(\beta)=\log \E e^{\beta\Delta_N}+\log \E \wt Z_N(\lambda)=\frac{\beta^2}{2}+\log \E\wt Z_N(\lambda).
\]

Thus
\begin{align*}
B_N(\beta) &= \left[\beta^2 + \frac{\mathbb{E}[\widetilde{Z}_N(\lambda)\widetilde{F}_N(\lambda)]}{\mathbb{E}\widetilde{Z}_N(\lambda)}\right] - \left[\frac{\beta^2}{2} + \log\mathbb{E}\widetilde{Z}_N(\lambda)\right] \\
&= \frac{\beta^2}{2} + \left[\frac{\mathbb{E}[\widetilde{Z}_N(\lambda)\widetilde{F}_N(\lambda)]}{\mathbb{E}\widetilde{Z}_N(\lambda)} - \log\mathbb{E}\widetilde{Z}_N(\lambda)\right] \\
&= \frac{\beta^2}{2} + \widetilde{B}_N(\lambda).
\end{align*}

\end{proof}

Hence, by Lemma \ref{prop:forwardKL}, it is enough to prove
\[
\wt B_N(1)\le \frac1{12}\log N + O(1).
\]

We shall later compare $\wt B_N(1)$ with $\wt B_N(\lambda_0)$, where $\lambda_0=1-N^{-1/3}.$ The next proposition shows that $\wt B_N(\lambda_0)$ already has the the coefficient 1/12.

\begin{proposition}\label{prop:leftendpoint}
There exists a universal constant $C<\infty$ such that for every $N\ge 2$,
\[
\wt B_N(\lambda_0)\le \frac1{12}\log N + C.
\]
\end{proposition}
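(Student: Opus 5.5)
The plan is to write $\wt B_N(\lambda)$ as a relative entropy and integrate its derivative in the signal-to-noise ratio $\gamma=\lambda^2$. The key is that the derivative is controlled by the overlap, and the overlap is bounded by $1/(1-\gamma)$ up to an integrable error.

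\emph{Step 1: relative-entropy form.} Let $P_0$ be the law of $J$ (i.i.d. standard Gaussians). Define the planted law by $dP_\lambda/dP_0=\wt Z_N(\lambda)/\E\wt Z_N(\lambda)$. Then \eqref{eq:wtBdef} says exactly $\wt B_N(\lambda)=\KL(P_\lambda\|P_0)$.

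Since $\wt Z_N/\E\wt Z_N$ is an average over $\sigma$ of Gaussian likelihood ratios, $P_\lambda$ is the law of
\[
J_{ij}=\frac{\lambda}{\sqrt N}\sigma^*_i\sigma^*_j+G_{ij},\qquad \sigma^*\ \text{uniform on}\ \Sigma_N .
\]
This is the Gaussian synchronization (spiked Wigner) model $Y^\gamma=\sqrt\gamma\,U+G$ with $U_{ij}=\sigma^*_i\sigma^*_j/\sqrt N$. Because $\|U\|^2=\binom N2/N$ is deterministic, one has $\KL(P_\lambda\|P_0)=I(U;Y^\gamma)-\frac{\gamma}{2}\E\|U\|^2+\frac{\gamma}{2}\E\|U\|^2$ up to the standard bookkeeping. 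In any case Theorem~\ref{thm:IMSE} gives
\[
\frac{d}{d\gamma}\wt B_N=\frac{1}{2N}\sum_{i<j}\E_{P_\lambda}\bigl[\langle\sigma_i\sigma_j\rangle^2\bigr]
=\frac14\Bigl(N\,\E_{P_\lambda}\langle R_{12}^2\rangle-1\Bigr).
\]
Here $\langle\cdot\rangle$ is the posterior, which is the Gibbs measure of $\wt H_N(\cdot;J)$ at inverse temperature $\lambda$ under $J\sim P_\lambda$. I will double-check the sign conventions against the formula $\frac{d}{d\gamma}\KL=\frac12\E\|\E[U\mid Y]\|^2$, which is the version I actually expect.

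\emph{Step 2: the overlap bound.} The heart of the proof is the estimate
\begin{equation*}
N\,\E_{P_\lambda}\langle R_{12}^2\rangle\le \frac{1}{1-\gamma}+\frac{C}{N(1-\gamma)^{3}},\qquad 0\le\gamma\le\lambda_0^2. \tag{$\ast$}
\end{equation*}
By the Nishimori identity, $\E_{P_\lambda}\langle R_{12}^2\rangle=\E_{P_\lambda}\langle R(\sigma,\sigma^*)^2\rangle$. After the gauge transformation $J_{ij}\mapsto\sigma^*_i\sigma^*_jJ_{ij}$, this becomes the magnetization $\langle m^2\rangle$ in the SK model with a ferromagnetic shift $\lambda/\sqrt N$.

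I would prove $(\ast)$ by a Latała-type argument, following Talagrand's treatment of high-temperature overlap control. The steps are:
\begin{enumerate}
\item Derive an approximate cavity/Gaussian-integration-by-parts identity
\[
(1-\gamma)\,N\E\langle R^2\rangle=1+\mathrm{Err},
\]
where $\mathrm{Err}$ involves third and fourth overlap moments.
\item Bound those higher moments by $C\,(N(1-\gamma))^{-2}$ using an exponential moment estimate $\E\langle\exp(tN R^2)\rangle\le C$ for $t<c(1-\gamma)$.
\item Obtain that exponential moment from the second-moment computation
\[
\E\wt Z^2/(\E\wt Z)^2=\E\exp\bigl(\gamma(NR^2-1)/2\bigr)\lesssim(1-\gamma)^{-1/2},
\]
combined with the change of measure $\E_{P_\lambda}[\cdot]=\E_0[(\wt Z/\E\wt Z)\,\cdot\,]$ and Cauchy--Schwarz.
\end{enumerate}
If the Latała route is too delicate near $\gamma\to1$, the fallback is to prove $(\ast)$ only for $\gamma\le1-N^{-1/3}\cdot K$ for a large fixed $K$, where $N(1-\gamma)^3\gtrsim K^3$. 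The leftover stretch of $\gamma$ of length $O(N^{-1/3})$ would then be handled by the crude bound $N\E\langle R^2\rangle\lesssim N^{1/3}$, which costs only $O(1)$. Securing the constant exactly $1$ in front of $(1-\gamma)^{-1}$ is the step I expect to be the main obstacle. The chi-square bound $\KL\le\log(1+\chi^2)$ alone gives only $\frac16\log N$, twice the target, so some genuinely second-order information is needed.

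\emph{Step 3: integration.} Since $\wt B_N(0)=0$, integrating the derivative formula from Step 1 against $(\ast)$ gives
\[
\wt B_N(\lambda_0)\le\frac14\int_0^{\lambda_0^2}\Bigl(\frac{1}{1-\gamma}-1\Bigr)d\gamma+\frac{C}{N}\int_0^{\lambda_0^2}\frac{d\gamma}{(1-\gamma)^3}.
\]
The first integral equals $-\frac14\log(1-\lambda_0^2)+O(1)$. Since $1-\lambda_0^2=2N^{-1/3}-N^{-2/3}$, this is $\frac1{12}\log N+O(1)$. The second integral is bounded by $C N^{-1}(1-\lambda_0^2)^{-2}=O(N^{-1/3})$. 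Together these prove Proposition~\ref{prop:leftendpoint}.
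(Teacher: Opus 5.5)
There is a genuine gap: the whole argument rests on $(\ast)$, and $(\ast)$ is asserted but never proved. It has to hold with leading constant exactly $1$, uniformly for $1-\gamma$ down to order $N^{-1/3}$. Any weaker form $N\E\langle R_{12}^2\rangle\le C/(1-\gamma)$ integrates to $\frac{C}{12}\log N$, which is too large here. Your Latała/cavity sketch does not deliver this.

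\begin{itemize}
\item The third item of your Step 2 fails as written. The change of measure gives $\E_{P_\lambda}\langle e^{tNR_{12}^2}\rangle=\E_0\bigl[L\,\langle e^{tNR_{12}^2}\rangle\bigr]$ with $L=\wt Z_N/\E\wt Z_N$. The integrand still contains the random factor $1/\wt Z_N$, so the annealed second moment does not control it.
\item Cauchy--Schwarz only reduces it to a null-model exponential overlap moment. That is itself a Latała-type theorem with no explicit dependence on $1-\gamma$. It also costs a factor $(\E_0L^2)^{1/2}$ of order $(1-\gamma)^{-1/4}$, which destroys uniformity.
\item The first two items are likewise only a program. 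A cavity identity with error controlled uniformly up to distance $N^{-1/3}$ from criticality is a near-critical estimate about as deep as the theorem itself.
\end{itemize}
A minor point: the Step 1 bookkeeping should read $\KL(P_\lambda\|P_0)=\frac\gamma2\E\|U\|^2-I(U;Y^\gamma)$. The derivative formula you then state is correct.

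The missing idea is that the factor of two you thought needed second-order information comes for free from Gaussian convexity. Since $J\mapsto\wt F_N(\lambda;J)$ is convex, Theorem~\ref{thm:Chen} makes $s\mapsto\wt{\mathcal K}_{N,\lambda}(s)=\frac1s\log\E e^{s\wt F_N(\lambda)}$ convex. Hence $\wt B_N(\lambda)=\wt{\mathcal K}_{N,\lambda}'(1)\le\wt{\mathcal K}_{N,\lambda}(2)-\wt{\mathcal K}_{N,\lambda}(1)=\frac12\log\bigl(\E\wt Z_N(\lambda)^2/(\E\wt Z_N(\lambda))^2\bigr)$. That is, $\KL\le\frac12\log(1+\chi^2)$ rather than $\log(1+\chi^2)$. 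Your own second-moment formula together with $\cosh x\le e^{x^2/2}$ then gives $\wt B_N(\lambda)\le\frac14\log\frac1{1-\lambda^2}$, and $1-\lambda_0^2\ge N^{-1/3}$ finishes. This is the paper's proof; it needs no overlap estimate and no integration in $\gamma$.

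If you want to keep your I--MMSE route, $(\ast)$ with the sharp constant does follow from tools already in the paper:
\begin{itemize}
\item By Corollary~\ref{cor:AB-specialized}, $N\E\langle R_{12}^2\rangle=1+(N-1)\E m_{12}^2\le 1+(N-1)\Pbb(1\leftrightarrow2)=\E|\mathcal C(1)|$ in $\mathcal G_{N,p_N(\lambda)}$.
\item Dominating the exploration of $\mathcal C(1)$ by a Galton--Watson tree with $\mathrm{Bin}(N-1,p_N(\lambda))$ offspring gives $\E|\mathcal C(1)|\le(1-(N-1)p_N(\lambda))^{-1}$.
\item Lemma~\ref{lem:singleedge-expansion} bounds this by $(1-\lambda^2-C/N)^{-1}$.
\item Integrating up to $\lambda_0^2$ gives $\frac1{12}\log N+O(1)$.
\end{itemize}
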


\begin{proof}
Fix $\lambda\in(0,1)$ and define, as before, $\wt \Lambda_{N,\lambda}(s)=\log \E e^{s\wt F_N(\lambda)},$ and
\[
\wt{\mathcal K}_{N,\lambda}(s)=
\begin{cases}
\dfrac{\wt \Lambda_{N,\lambda}(s)}{s}, & s>0,\\[1ex]
\E \wt F_N(\lambda), & s=0.
\end{cases}
\]
The same convexity argument as in Lemma \ref{lem:FNconvex} shows that $\wt{\mathcal K}_{N,\lambda}$ is convex. Hence

\begin{equation}\label{eq:wtBsecant}
\begin{aligned}
  \wt B_N(\lambda)=\wt{\mathcal K}_{N,\lambda}'(1)
&\le \wt{\mathcal K}_{N,\lambda}(2)-\wt{\mathcal K}_{N,\lambda}(1)\\
&= \frac{1}{2}\log\mathbb{E}\widetilde{Z}_N(\lambda)^2 - \log\mathbb{E}\widetilde{Z}_N(\lambda) \\
&= \frac{1}{2}\log\mathbb{E}\widetilde{Z}_N(\lambda)^2 - \frac{1}{2}\log\left(\mathbb{E}\widetilde{Z}_N(\lambda)\right)^2 \\
&= \frac{1}{2}\log\frac{\mathbb{E}[\widetilde{Z}_N(\lambda)^2]}{\left(\mathbb{E}\widetilde{Z}_N(\lambda)\right)^2}.
\end{aligned}
\end{equation}

For each fixed $\sigma\in\Sigma_N$, the random variable $\wt H_N(\sigma;J)$ is centered Gaussian with variance
\[
\E \wt H_N(\sigma;J)^2 = \frac1N\sum_{1\le i<j\le N}1 = \frac{N-1}{2}.
\]
Hence
$$
\mathbb{E}e^{\lambda\widetilde{H}_N(\sigma;J)} = \exp\left\{ \frac{\lambda^2}{2}\operatorname{Var}(\widetilde{H}_N(\sigma;J)) \right\} = \exp\left\{ \frac{\lambda^2(N-1)}{4} \right\}.$$

Summing over all $2^N$ values of $\sigma$, we obtain
\begin{equation}\label{eq:wtEZ}
\E \wt Z_N(\lambda)=2^N e^{\lambda^2(N-1)/4}.
\end{equation}
Next, for fixed $\sigma,\rho\in\Sigma_N$,
\begin{align*}
\E \wt H_N(\sigma;J)\wt H_N(\rho;J)
&=\frac1N\sum_{1\le i<j\le N} \sigma_i\sigma_j\rho_i\rho_j.
\end{align*}
Set $\tau_i=\sigma_i\rho_i\in\{-1,+1\}$. Then
\[
\sum_{1\le i<j\le N} \tau_i\tau_j
=\frac12\left(\Bigl(\sum_{i=1}^N \tau_i\Bigr)^2 - \sum_{i=1}^N \tau_i^2\right)
=\frac12\left(\Bigl(\sum_{i=1}^N \tau_i\Bigr)^2 - N\right),
\]
so
\begin{equation}\label{eq:cov-wtH}
\E \wt H_N(\sigma;J)\wt H_N(\rho;J)=
\frac{1}{N} \cdot \frac{1}{2} \left[ N^2 R(\sigma, \rho)^2 - N \right]
=\frac12\bigl(NR(\sigma,\rho)^2-1\bigr),
\end{equation}
where $R(\sigma, \rho)$ is the overlap defined above, and $\left( \sum_{i=1}^N \tau_i \right)^2 = N^2 R(\sigma, \rho)^2.$ Therefore,
\begin{align*}
\E\wt Z_N(\lambda)^2
&=\sum_{\sigma,\rho\in\Sigma_N}
\exp\left(\frac{\lambda^2}{2}\Bigl(\E \wt H_N(\sigma;J)^2 + \E \wt H_N(\rho;J)^2 + 2\E\wt H_N(\sigma;J)\wt H_N(\rho;J)\Bigr)\right)\\
&=\sum_{\sigma,\rho\in\Sigma_N}
\exp\left(\frac{\lambda^2}{2}\bigl(N-1 + N R(\sigma,\rho)^2 -1\bigr)\right)\\
&= e^{\lambda^2(N-2)/2}\sum_{\sigma,\rho\in\Sigma_N} e^{\lambda^2 N R(\sigma,\rho)^2/2}.
\end{align*}
Dividing by \eqref{eq:wtEZ} gives
\begin{equation}\label{eq:moment-ratio-raw}
\frac{\E\wt Z_N(\lambda)^2}{(\E\wt Z_N(\lambda))^2}
= e^{-\lambda^2/2}\,2^{-2N}\sum_{\sigma,\rho\in\Sigma_N} e^{\lambda^2 N R(\sigma,\rho)^2/2}.
\end{equation}
Let $\sigma,\rho$ now be independent uniform random spins, and define $\xi_i=\sigma_i\rho_i$. Then $(\xi_i)_{i=1}^N$ are i.i.d. Rademacher random variables, and let $S_N=\sum_{i=1}^N \xi_i,$ then $R(\sigma,\rho)=S_N/N$. Thus \eqref{eq:moment-ratio-raw} becomes
\begin{equation}\label{eq:moment-ratio}
\frac{\E\wt Z_N(\lambda)^2}{(\E\wt Z_N(\lambda))^2}
= e^{-\lambda^2/2}\, \E\exp\!\left(\frac{\lambda^2}{2N}S_N^2\right).
\end{equation}
To bound the remaining expectation, let $G\sim \cN(0,1)$. The Hubbard--Stratonovich identity gives
\begin{equation}
    \exp\!\left(\frac{x^2}{2}\right)
    =
    \E_G e^{Gx}.
    \label{eq:gaussian-mgf}
\end{equation}
Applying \eqref{eq:gaussian-mgf} with $x=\lambda S_N/\sqrt N$ and using independence of the $\xi_i$,
\begin{align*}
\E\exp\!\left(\frac{\lambda^2}{2N}S_N^2\right)
&=\E_G\E\exp\!\left(\frac{\lambda G}{\sqrt N}S_N\right)\\
&=\E_G\prod_{i=1}^N \E e^{\lambda G\xi_i/\sqrt N}
=\E_G\Bigl[\cosh\!\left(\frac{\lambda G}{\sqrt N}\right)^N\Bigr].
\end{align*}
Since $\cosh x\le e^{x^2/2}$ for every $x\in\R$,
\begin{equation}
\begin{aligned}
    \E \exp\!\left(\frac{\lambda^2}{2N}S_N^2\right)
    \leq
    \E_G e^{\lambda^2 G^2/2}=
    (1-\lambda^2)^{-1/2},
    \qquad 0<\lambda<1 .
\end{aligned}
\label{eq:SN-exp-square-bound}
\end{equation}
Combining \eqref{eq:SN-exp-square-bound} with \eqref{eq:wtBsecant} and \eqref{eq:moment-ratio}, we obtain
\[
\wt B_N(\lambda)
\le \frac12\log\left(e^{-\lambda^2/2}(1-\lambda^2)^{-1/2}\right)
= -\frac{\lambda^2}{4} + \frac14\log\frac1{1-\lambda^2}
\le \frac14\log\frac1{1-\lambda^2}.
\]
Now substitute $\lambda=\lambda_0=1-N^{-1/3}$. Then
\[
1-\lambda_0^2 = 1-(1-N^{-1/3})^2 = 2N^{-1/3}-N^{-2/3}.
\]
Hence, for $N\ge 2$, $1-\lambda_0^2 \ge N^{-1/3},$ $1-\lambda_0^2 \le 2N^{-1/3},$ and therefore
\[
\wt B_N(\lambda_0)
\le \frac14\log\frac1{1-\lambda_0^2}
\le \frac14\left(\frac13\log N + \log 2\right).
\]

\end{proof}
We next relate \(\widetilde B_N\) to a Gaussian observation model in information-theoretic regime. Fix $\lambda\ge 0$. Let $E_N=\bigl\{\{i,j\}: 1\le i<j\le N\bigr\},$ and $M=|E_N|=\binom N2.$ Let $\nu_0$ be the law of an $M$-dimensional standard Gaussian vector $Y=(Y_{ij})_{1\le i<j\le N}$. Independently, let $\theta=(\theta_1,\dots,\theta_N)$ be uniform on $\Sigma_N$, and given $\theta$ define the observation model
\begin{equation}\label{eq:sync-model}
Y_{ij}=\frac{\lambda}{\sqrt N}\theta_i\theta_j + G_{ij},
\qquad 1\le i<j\le N,
\end{equation}
where $(G_{ij})_{i<j}$ are i.i.d. standard Gaussians, independent of $\theta$. Let $\mathbb P_\lambda$ denote the joint law of $(\theta,Y)$, let $\nu_{\lambda,x}$ denote the conditional law of $Y$ given $\theta=x$, and let $\nu_\lambda$ denote the marginal law of $Y$.

\begin{lemma}\label{prop:KLrepresentation}
For every $\lambda\ge 0$,
\[
\wt B_N(\lambda)=\KL(\nu_\lambda\|\nu_0).
\]
\end{lemma}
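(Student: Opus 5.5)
The proof is a direct computation of the likelihood ratio $d\nu_\lambda/d\nu_0$, identified with the normalized partition function. We identify the coupling vector $J=(J_{ij})_{i<j}$ with the observation vector $Y$ under the null law $\nu_0$; both are standard Gaussian in $\R^M$.

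\textbf{Step 1: likelihood ratio.} For fixed $x\in\Sigma_N$, $\nu_{\lambda,x}$ is Gaussian with mean $\mu_x=(\lambda x_ix_j/\sqrt N)_{i<j}$ and identity covariance. Hence
\[
\frac{d\nu_{\lambda,x}}{d\nu_0}(Y)=\exp\Bigl(\langle \mu_x,Y\rangle-\tfrac12\|\mu_x\|^2\Bigr)
=\exp\Bigl(\lambda\wt H_N(x;Y)-\tfrac{\lambda^2(N-1)}{4}\Bigr),
\]
using $\|\mu_x\|^2=\lambda^2\binom N2/N=\lambda^2(N-1)/2$. Averaging over uniform $x$ and comparing with \eqref{eq:wtEZ} gives
\[
L(Y):=\frac{d\nu_\lambda}{d\nu_0}(Y)=2^{-N}e^{-\lambda^2(N-1)/4}\,\wt Z_N(\lambda;Y)=\frac{\wt Z_N(\lambda;Y)}{\E\wt Z_N(\lambda)}.
\]

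\textbf{Step 2: KL as a tilted entropy.} By definition,
\[
\KL(\nu_\lambda\|\nu_0)=\E_{\nu_0}[L\log L]=\frac{\E[\wt Z_N(\lambda)\log\wt Z_N(\lambda)]}{\E\wt Z_N(\lambda)}-\log\E\wt Z_N(\lambda),
\]
where the second term uses $\E_{\nu_0}L=1$. With $\wt F_N=\log\wt Z_N$, this is exactly \eqref{eq:wtBdef}.

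\textbf{Step 3: integrability.} All expectations above must be finite. This follows from $|\log \wt Z_N|\le N\log 2+\lambda\sum|J_{ij}|/\sqrt N$ together with Gaussian exponential moments, as in Lemma~\ref{lem:FNconvex}.

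Nothing here is a real obstacle. The only points requiring care are matching the normalization $\|\mu_x\|^2/2=\lambda^2(N-1)/4$ with \eqref{eq:wtEZ}, and checking the change of variables $J\leftrightarrow Y$ under $\nu_0$.
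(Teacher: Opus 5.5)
Your proposal is correct and is essentially the paper's proof: you compute $d\nu_{\lambda,x}/d\nu_0=\exp\bigl(\lambda\wt H_N(x;Y)-\lambda^2(N-1)/4\bigr)$, average over the uniform prior to get $d\nu_\lambda/d\nu_0=\wt Z_N(\lambda)/\E\wt Z_N(\lambda)$, and rewrite $\KL(\nu_\lambda\|\nu_0)$ as the tilted entropy \eqref{eq:wtBdef}. Your Step~3 integrability check is a harmless addition that the paper leaves implicit.
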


\begin{proof}
Fix $x\in\Sigma_N$. Under $\nu_{\lambda,x}$, $Y_{ij} \sim \mathcal{N}\left(\frac{\lambda}{\sqrt{N}}x_i x_j, 1\right).$ Under $\nu_0$, $Y_{ij} \sim \mathcal{N}(0, 1).$ The ratio of one-dimensional Gaussian densities is
\[
\frac{\exp\left\{-\frac{1}{2}(y-\mu)^2\right\}}{\exp\left\{-\frac{1}{2}y^2\right\}} = \exp\left\{\frac{\lambda}{\sqrt{N}}x_i x_j y - \frac{\mu^2}{2}\right\}.
\]

Therefore, the density ratio on a single edge is $\exp\left\{\frac{\lambda}{\sqrt{N}}y_{ij}x_i x_j - \frac{\lambda^2}{2N}\right\},$ under the conditional law $\nu_{\lambda,x}$, the density with respect to $\nu_0$ is
\begin{align*}
\frac{d\nu_{\lambda,x}}{d\nu_0}(y)
&=\prod_{1\le i<j\le N}
\exp\left(\frac{\lambda}{\sqrt N}y_{ij}x_ix_j - \frac{\lambda^2}{2N}\right)\\
&=\exp\left(\lambda\,\wt H_N(x;y)-\frac{\lambda^2 M}{2N}\right).
\end{align*}
Averaging over the uniform prior on $\theta$ gives
\begin{align*}
\frac{d\nu_\lambda}{d\nu_0}(y)&=2^{-N} \sum_{x \in \Sigma_N} \frac{d\nu_{\lambda,x}}{d\nu_0}(y)\\
&=2^{-N}\sum_{x\in\Sigma_N}
\exp\left(\lambda\,\wt H_N(x;y)-\frac{\lambda^2 M}{2N}\right)\\
&=\frac{\wt Z_N(\lambda;y)}{2^N e^{\lambda^2 M/(2N)}}.
\end{align*}
By \eqref{eq:wtEZ},
\[
\E_{\nu_0} \wt Z_N(\lambda)= \exp \left\{ \frac{\lambda^2}{2} \cdot \frac{M}{N} \right\}=2^N e^{\lambda^2 M/(2N)},
\]
so
\begin{equation}\label{eq:RNdensity}
\frac{d\nu_\lambda}{d\nu_0}(y)=\frac{\wt Z_N(\lambda;y)}{\E_{\nu_0}\wt Z_N(\lambda)}.
\end{equation}
Hence
\begin{align*}
\KL(\nu_\lambda\|\nu_0)
&=\E_{\nu_\lambda}\log\frac{d\nu_\lambda}{d\nu_0}(Y)
=\frac{\E_{\nu_0}\bigl[\wt Z_N(\lambda)\log(d\nu_\lambda/d\nu_0)\bigr]}{\E_{\nu_0}\wt Z_N(\lambda)}\\
&=\frac{\E_{\nu_0}\bigl[\wt Z_N(\lambda)(\wt F_N(\lambda)-\log \E_{\nu_0}\wt Z_N(\lambda))\bigr]}{\E_{\nu_0}\wt Z_N(\lambda)}\\
&=\frac{\E[\wt Z_N(\lambda)\wt F_N(\lambda)]}{\E \wt Z_N(\lambda)} - \log \E\wt Z_N(\lambda)
=\wt B_N(\lambda).
\end{align*}
\end{proof}

Define the edge-spin vector $U=(U_{ij})_{1\le i<j\le N}$ by $U_{ij}=\theta_i\theta_j.$ Then \eqref{eq:sync-model} takes the form $Y=\sqrt\gamma\,U + G,$ where $\gamma=\frac{\lambda^2}{N},$ and $G\sim \cN(0,I_M)$ is independent of $U$. For $1\le i<j\le N$, define the posterior correlation
\begin{equation}\label{eq:mijdef}
m_{ij}^{(\lambda)}(Y)=\E_{\mathbb P_\lambda}[\theta_i\theta_j\mid Y].
\end{equation}
We can show that $\wt B_N'(\lambda)$ is expressed in terms of $m_{ij}^{(\lambda)}(Y)$.

\begin{proposition}\label{prop:derivative}
For every $\lambda\ge 0$,
\[
\frac{d}{d\lambda}\wt B_N(\lambda)=\frac{\lambda}{N}\sum_{1\le i<j\le N} \E_{\mathbb P_\lambda}\bigl[m_{ij}^{(\lambda)}(Y)^2\bigr].
\]
\end{proposition}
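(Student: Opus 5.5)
The plan is to combine Lemma~\ref{prop:KLrepresentation} with the I-MMSE formula (Theorem~\ref{thm:IMSE}).

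\textbf{Step 1: rewrite the divergence as mutual information.} By the chain rule for divergence,
\[
I(\theta;Y)=\E\,\KL(\nu_{\lambda,\theta}\|\nu_0)-\KL(\nu_\lambda\|\nu_0).
\]
From the explicit density computed in the proof of Lemma~\ref{prop:KLrepresentation}, each conditional law satisfies
\[
\KL(\nu_{\lambda,x}\|\nu_0)=\sum_{i<j}\frac{\lambda^2}{2N}=\frac{\lambda^2M}{2N},
\]
independently of $x$. Hence
\[
\wt B_N(\lambda)=\frac{\lambda^2M}{2N}-I(\theta;Y).
\]
Since $Y$ depends on $\theta$ only through $U=(\theta_i\theta_j)_{i<j}$, and $\theta\mapsto U$ is two-to-one with $Y$ independent of $\theta$ given $U$, we have $I(\theta;Y)=I(U;Y)$.

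\textbf{Step 2: apply I-MMSE.} With $\gamma=\lambda^2/N$ and $Y=\sqrt\gamma\,U+G$, Theorem~\ref{thm:IMSE} applies because $\|U\|^2=M$ is bounded. It gives
\[
\frac{d}{d\gamma}I(U;Y)=\frac12\sum_{i<j}\E\bigl[(U_{ij}-m_{ij})^2\bigr].
\]
Because $U_{ij}^2=1$ and $\E[U_{ij}m_{ij}]=\E[m_{ij}^2]$ by the tower property, each summand equals $1-\E[m_{ij}^2]$. Therefore
\[
\frac{d}{d\gamma}\wt B_N=\frac M2-\frac12\sum_{i<j}\bigl(1-\E m_{ij}^2\bigr)=\frac12\sum_{i<j}\E\, m_{ij}^2 .
\]
Multiplying by $d\gamma/d\lambda=2\lambda/N$ gives the claimed formula.

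\textbf{Main obstacle.} The main care point is justifying the identity $\KL(\nu_\lambda\|\nu_0)=\E\KL(\nu_{\lambda,\theta}\|\nu_0)-I(\theta;Y)$. It holds because all quantities are finite, the prior is finite, and the densities are explicit. The differentiability required for the chain rule is covered by Theorem~\ref{thm:IMSE}.

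As a sanity check, differentiating \eqref{eq:wtBdef} directly by Gaussian integration by parts under the tilted measure should give the same answer. In that route, $\frac{d}{d\lambda}\wt B_N$ becomes $\lambda N^{-1}\sum_{i<j}\E\langle\sigma_i\sigma_j\rangle^2$ under the planted law, and this matches the formula since the posterior of $\theta$ given $Y$ is the Gibbs measure of $\wt H_N(\cdot;Y)$ at $\lambda$.
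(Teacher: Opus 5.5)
Your proof is correct and follows essentially the same route as the paper. The paper obtains $\wt B_N(\lambda)=\frac{\gamma M}{2}-I(U;Y)$ from the identity $I(U;Y)+\KL(\nu_\lambda\|\nu_0)=\E\log\frac{dP(Y\mid U)}{d\nu_0}(Y)$, which is exactly your chain-rule-for-divergence step, written directly with $U$ rather than passing through $\theta$ and $I(\theta;Y)=I(U;Y)$; it then applies I-MMSE and the chain rule with $d\gamma/d\lambda=2\lambda/N$ just as you do.
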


\begin{proof}
All expectations and conditional expectations in this proof are taken with respect to the joint law $\mathbb P_\lambda$.
Set $I_N(\gamma)=I(U;Y),$ $\KL(\nu_\lambda\|\nu_0)=\wt B_N(\lambda).$  We first relate $\mathcal D_N$ and $I_N$. Conditional on \(U=u\), the observation \(Y\) has distribution \(\mathcal N(\sqrt{\gamma},u,I_M)\). Hence, \
\[
\frac{dP(Y\mid U)}{d\nu_0}(Y)=\exp\left(\sqrt\gamma\,U\cdot Y - \frac\gamma2\|U\|^2\right)
\]
and $\|U\|^2=M$ deterministically, we have

\begin{equation}
\begin{split}
I_N(\gamma)+\KL(\nu_\lambda\|\nu_0)
&=\E\log\frac{dP(Y\mid U)}{d\nu_0}(Y)\\
&=\E\left[\sqrt\gamma\,U\cdot Y - \frac\gamma2 M\right].
\end{split}
\label{eq:ID_decomposition}
\end{equation}


%

But under $Y=\sqrt\gamma\,U+G$, $$U \cdot Y = U \cdot (\sqrt{\gamma}U + G) = \sqrt{\gamma}\|U\|^2 + U \cdot G.$$

Thus, 
\[
\E[U\cdot Y]= \sqrt{\gamma} \, \mathbb{E}\|U\|^2 + \mathbb{E}[U \cdot G]=\sqrt\gamma\,\E\|U\|^2 = \sqrt\gamma\,M,
\]
so
\begin{equation}\label{eq:DplusI}
\KL(\nu_\lambda\|\nu_0)=\frac{\gamma M}{2} - I_N(\gamma).
\end{equation}
Now apply Theorem \ref{thm:IMSE}. Since $U$ is an $M$-dimensional random vector with finite second moment,
\[
\frac{d}{d\gamma}I_N(\gamma)
=\frac12\sum_{1\le i<j\le N}\E\Bigl[(U_{ij}-\E[U_{ij}\mid Y])^2\Bigr].
\]
Because $U_{ij}^2=1$ and $\E[U_{ij}\mid Y]=m_{ij}^{(\lambda)}(Y)$,
\begin{align*}
\E\Bigl[(U_{ij}-\E[U_{ij}\mid Y])^2\Bigr]
&=\E\bigl[U_{ij}^2\bigr]-2\E\bigl[U_{ij}m_{ij}^{(\lambda)}(Y)\bigr]+\E\bigl[m_{ij}^{(\lambda)}(Y)^2\bigr]\\
&=1-\E\bigl[m_{ij}^{(\lambda)}(Y)^2\bigr],
\end{align*}
and
\[
\E\bigl[U_{ij}m_{ij}^{(\lambda)}(Y)\bigr]
=\E\Bigl[\E[U_{ij}\mid Y]m_{ij}^{(\lambda)}(Y)\Bigr]
=\E\bigl[m_{ij}^{(\lambda)}(Y)^2\bigr].
\]
Therefore,
\[
\frac{d}{d\gamma}I_N(\gamma)
=\frac12\sum_{1\le i<j\le N}\left(1-\E\bigl[m_{ij}^{(\lambda)}(Y)^2\bigr]\right).
\]
Differentiating \eqref{eq:DplusI} yields
\[
\frac{d}{d\gamma}\KL(\nu_\lambda\|\nu_0)
=\frac M2-\frac{d}{d\gamma}I_N(\gamma)= \frac{M}{2} - \frac{1}{2} \sum_{1 \le i < j \le N} \left( 1 - \mathbb{E}[m_{ij}^{(\lambda)}(Y)^2] \right)
=\frac12\sum_{1\le i<j\le N}\E\bigl[m_{ij}^{(\lambda)}(Y)^2\bigr].
\]
Finally,
\[
\frac{d\gamma}{d\lambda}=\frac{2\lambda}{N},
\]
so by the chain rule,
\[
\frac{d}{d\lambda}\wt B_N(\lambda)
=\frac{2\lambda}{N}\cdot \frac12\sum_{1\le i<j\le N}\E\bigl[m_{ij}^{(\lambda)}(Y)^2\bigr].
\]
\end{proof}

We now connect the posterior correlation to bond percolation.

\begin{lemma}\label{lem:I2equalsm2}
Let $(A,B,Y)$ be random variables with $A,B\in\{-1,+1\}$ such that, almost surely,
\[
\Pbb(A=1\mid Y)=\Pbb(A=-1\mid Y)=\frac12,
\]
and
\[
\Pbb(B=1\mid Y)=\Pbb(B=-1\mid Y)=\frac12.
\]
Then
\[
\Ichi(A;B\mid Y)=\E\bigl[\E[AB\mid Y]^2\bigr].
\]
\end{lemma}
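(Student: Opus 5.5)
The argument will work conditionally on $Y$ and reduce $\Ichi$ to an explicit computation on a $2\times2$ table. I will fix a realization $y$ of $Y$ and write $P$ for the conditional joint law of $(A,B)$ given $Y=y$; this is a probability on $\{-1,+1\}^2$ (take a regular conditional distribution, which exists because the alphabet is finite). By hypothesis both marginals of $P$ are uniform, so the product measure $P_{A\mid Y}P_{B\mid Y}$ is uniform on the four points, assigning mass $1/4$ to each.

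Any law on $\{-1,+1\}^2$ with uniform marginals is determined by a single parameter, the correlation $m=\E[AB\mid Y=y]$. I will use the expansion
\[
P(a,b)=\tfrac14\bigl(1+a\,\E[A\mid Y]+b\,\E[B\mid Y]+ab\,m\bigr),
\]
which follows from the Fourier/Walsh expansion of a function on $\{-1,+1\}^2$. Under the hypotheses $\E[A\mid Y]=\E[B\mid Y]=0$, so this reduces to $P(a,b)=\tfrac14(1+abm)$.

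Next I will compute the $\chi^2$-divergence directly:
\[
\chi^2(P,Q)=\sum_{a,b}\frac{(P(a,b)-Q(a,b))^2}{Q(a,b)}=\sum_{a,b}\frac{(abm/4)^2}{1/4}=4\cdot\frac{m^2}{4}=m^2.
\]
Here $Q$ is the uniform product law. Since $Q$ has full support, no absolute-continuity issue arises.

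Finally I will take the expectation over $Y$ and use the definition of $\Ichi$ to obtain $\Ichi(A;B\mid Y)=\E[\E[AB\mid Y]^2]$. The only point needing care is measurability and almost-sure validity of the conditional statements. Because everything is a finite-alphabet computation with the conditional probabilities as measurable functions of $Y$, this is routine.
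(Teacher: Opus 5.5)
Your proposal is correct and follows the paper's proof essentially step for step. Both arguments use the uniform conditional marginals to write the conditional law as $\frac14\bigl(1+ab\,m(Y)\bigr)$, compute the $\chi^2$-divergence against the uniform product law to get $m(Y)^2$, and take expectation over $Y$; your Walsh-expansion remark simply makes explicit the paper's ``necessarily'' step for the form of the conditional law.
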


\begin{proof}
Set $m(Y)=\E[AB\mid Y].$ Since the conditional marginals of $A$ and $B$ are uniform, the conditional law of $(A,B)$ given $Y$ is necessarily
\[
\Pbb(A=a,B=b\mid Y)=\frac14\bigl(1+ab\,m(Y)\bigr),
\qquad a,b\in\{-1,+1\}.
\]
Therefore,
\begin{align*}
\chi^2\bigl(P_{A,B\mid Y},P_{A\mid Y}P_{B\mid Y}\bigr)
&=\sum_{a,b\in\{-1,+1\}} \frac{\left(\frac14(1+ab\,m(Y)) - \frac14\right)^2}{\frac14}\\
&=\sum_{a,b\in\{-1,+1\}} \frac{(ab\,m(Y))^2}{16}\cdot 4\\
&=m(Y)^2.
\end{align*}
Taking expectations proves the claim.
\end{proof}

\begin{corollary}\label{cor:AB-specialized}
For every $\lambda\ge 0$ and every $1\le u<v\le N$,
\begin{equation}
\label{eq:eEbound}
\E_{\mathbb P_\lambda}\bigl[m_{uv}^{(\lambda)}(Y)^2\bigr]
\le \Pbb\bigl(u\leftrightarrow v \text{ in } \mathcal{G}_{N,p_N(\lambda)}\bigr),
\end{equation}

where $p_N(\lambda)=\Ichi(\theta_1;\theta_2\mid Y_{12})$ in the single edge observation model associated with \eqref{eq:sync-model}. In particular, the right-hand side of \eqref{eq:eEbound} is the connection probability in Erd\H{o}s-R\'enyi graph $\mathcal{G}_{N,p_N(\lambda)}$ on $N$ vertices with edge density $p_N(\lambda)$.
\end{corollary}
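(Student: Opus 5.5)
The plan is to apply Theorem~\ref{thm:AB} to the synchronization model \eqref{eq:sync-model} on the complete graph $\mathcal G=K_N$ with vertex set $\{1,\dots,N\}$ and edge set $E_N$, and then use Lemma~\ref{lem:I2equalsm2} to identify the left-hand side with $\Ichi(\theta_u;\theta_v\mid Y)$. The argument has three steps: check the hypotheses of Theorem~\ref{thm:AB}, check the hypotheses of Lemma~\ref{lem:I2equalsm2}, and identify the percolation process with $\mathcal G_{N,p_N(\lambda)}$.

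\textbf{Hypotheses of Theorem~\ref{thm:AB}.} The labels $\theta$ are i.i.d.\ uniform on $\{-1,+1\}$. Given $\theta$, the observations $Y_{ij}$ are independent, since the $G_{ij}$ are independent. Moreover, $Y_{ij}$ depends on $\theta$ only through $\theta_i\theta_j$, with kernel $Q_e(\cdot\mid s)=\cN(\lambda s/\sqrt N,1)$. Each edge channel is symmetric with involution $T_e(y)=-y$, because $\cN(\mu,1)(A)=\cN(-\mu,1)(-A)$. Theorem~\ref{thm:AB} then gives
\[
\Ichi(\theta_u;\theta_v\mid Y)\le \Pbb(u\leftrightarrow v)
\]
for bond percolation on $K_N$ with open probabilities $p_e=\Ichi(\theta_i;\theta_j\mid Y_e)$.

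\textbf{Hypotheses of Lemma~\ref{lem:I2equalsm2}.} We need the conditional marginals of $\theta_u$ and $\theta_v$ given $Y$ to be uniform. This follows from the global spin-flip symmetry. The map $\theta\mapsto-\theta$ preserves the prior, and it leaves every product $\theta_i\theta_j$ unchanged, so it preserves the joint law of $(\theta,Y)$ with $Y$ fixed. Hence $\Pbb(\theta_u=1\mid Y)=\Pbb(\theta_u=-1\mid Y)$ almost surely, and likewise for $\theta_v$. Lemma~\ref{lem:I2equalsm2} then yields $\Ichi(\theta_u;\theta_v\mid Y)=\E[m_{uv}^{(\lambda)}(Y)^2]$.

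\textbf{Identifying the percolation.} The single-edge models $(\theta_i,\theta_j,Y_{ij})$ all have the same joint law, so $p_e$ does not depend on $e$ and equals $p_N(\lambda)=\Ichi(\theta_1;\theta_2\mid Y_{12})$. Independent bond percolation on $K_N$ with a constant edge probability is exactly $\mathcal G_{N,p_N(\lambda)}$. We also need $p_N(\lambda)\in[0,1]$. By the same flip argument applied to the single edge, Lemma~\ref{lem:I2equalsm2} gives $p_N(\lambda)=\E[\E[\theta_1\theta_2\mid Y_{12}]^2]\le 1$.

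I expect no real obstacle here. The only point requiring care is the conditional-uniformity hypothesis of Lemma~\ref{lem:I2equalsm2}, which the global flip symmetry settles. Measurability of the involution and the fact that the output spaces are continuous are covered by the general formulation of Theorem~\ref{thm:AB}.
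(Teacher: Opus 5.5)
Your proposal is correct and follows the paper's proof essentially step for step. You establish symmetry of the Gaussian edge channel via $y\mapsto -y$, use the global flip $\theta\mapsto-\theta$ to get uniform conditional marginals so that Lemma~\ref{lem:I2equalsm2} identifies $\Ichi(\theta_u;\theta_v\mid Y)$ with $\E[m_{uv}^{(\lambda)}(Y)^2]$, and note that $p_e$ does not depend on the edge, so the percolation is $\mathcal G_{N,p_N(\lambda)}$. Your extra check that $p_N(\lambda)=\E[\E[\theta_1\theta_2\mid Y_{12}]^2]\le 1$ is a small point the paper leaves implicit.
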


\begin{proof}
We consider the complete graph on $\{1,\dots,N\}$ endowed with the symmetric Gaussian edge channel $Y_e = \sqrt\gamma\,s + G_e,$ $s\in\{-1,+1\},\ G_e\sim \cN(0,1).$ Thus, the observation channel for a single edge is

$$Q(\mathrm{d}y \mid s) = \frac{1}{\sqrt{2\pi}} \exp \left\{ - \frac{(y - \sqrt{\gamma}s)^2}{2} \right\} \mathrm{d}y.$$

Moreover, take $T(y) = -y.$ Then $Q(A \mid +1) = \mathbb{P}(\sqrt{\gamma} + G \in A),$ while \[
Q(T(A) \mid -1) = \mathbb{P}(-\sqrt{\gamma} + G \in -A) = \mathbb{P}(\sqrt{\gamma} - G \in A).
\]
Since $G \stackrel{d}{=} -G$, we have
\[
\mathbb{P}(\sqrt{\gamma} - G \in A) = \mathbb{P}(\sqrt{\gamma} + G \in A).
\]

Hence the channel is symmetric, and Theorem \ref{thm:AB} applies.  Since the likelihood depends only on the products $\theta_i \theta_j$, $\theta = (\theta_1, \dots, \theta_N)$ and $-\theta = (-\theta_1, \dots, -\theta_N)$ yield the same edge products $(-\theta_i)(-\theta_j) = \theta_i \theta_j.$ Therefore,
\[
\mathbb{P}_\lambda(\theta = a \mid Y) = \mathbb{P}_\lambda(\theta = -a \mid Y)
\]
holds almost surely. Thus, for any vertex $u$,
\[
\mathbb{P}_\lambda(\theta_u = 1 \mid Y) = \mathbb{P}_\lambda(\theta_u = -1 \mid Y) = \frac{1}{2}.
\]


Then Lemma \ref{lem:I2equalsm2} yields
\[
\Ichi(\theta_u;\theta_v\mid Y)=\mathbb{E}_{\mathbb{P}_\lambda} \left[ \left( \mathbb{E}_{\mathbb{P}_\lambda} [\theta_u \theta_v \mid Y] \right)^2 \right]=\E_{\mathbb P_\lambda}\bigl[m_{uv}^{(\lambda)}(Y)^2\bigr].
\]
Similarly, for a single edge $e=\{i,j\}$, the individual conditional marginals of $\theta_i$ and $\theta_j$ given $Y_e$ are also uniform, so
\[
p_e=\Ichi(\theta_i;\theta_j\mid Y_e)=\mathbb{E} \left[ \left( \mathbb{E} [ \theta_i \theta_j \mid Y_{ij} ] \right)^2 \right].
\]
is the same for every edge and equals $p_N(\lambda)$. Thus the bond percolation model in Theorem \ref{thm:AB} is exactly the Erd\H{o}s-R\'enyi graph $\mathcal{G}_{N,p_N(\lambda)}$,and then we have 

$$\mathbb{E}_{\mathbb{P}_\lambda} \left[ |m_{uv}^{(\lambda)}(Y)|^2 \right] = I_2(\theta_u; \theta_v \mid Y) \le \mathbb{P}(u \leftrightarrow v \text{ in } \mathcal{G}_{N, p_N(\lambda)}).$$
\end{proof}

The remaining is to analyze the single edge open probability.

\begin{lemma}\label{lem:singleedge-expansion}
There exists a universal constant $C<\infty$ such that the following holds for all $N\ge 1$ and all $\lambda\in[0,1]$.
\begin{equation}\label{eq:pN-expansion}
p_N(\lambda)=\gamma + \varepsilon_N(\lambda),
\end{equation}
where $|\varepsilon_N(\lambda)|\le C\gamma^2.$ In particular, for $\lambda\in[\lambda_0,1]=[1-N^{-1/3},1],$ there exists a universal constant $A<\infty$ such that
\begin{equation}\label{eq:pN-critical-window}
p_N(\lambda)\le \frac1N + A N^{-4/3}.
\end{equation}
\end{lemma}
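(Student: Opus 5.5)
Since $p_N(\lambda)$ is an explicit one-dimensional Gaussian quantity, the plan is to compute it in closed form and Taylor expand in $\gamma=\lambda^2/N$. For a single edge, $\theta_1\theta_2=s$ is uniform on $\{-1,+1\}$ and $Y_{12}=\sqrt\gamma\,s+G$. Bayes' rule gives
\[
\E[s\mid Y_{12}]=\tanh(\sqrt\gamma\,Y_{12}).
\]
By Lemma~\ref{lem:I2equalsm2} (the single-edge marginals of $\theta_1,\theta_2$ are uniform, as noted in Corollary~\ref{cor:AB-specialized}),
\[
p_N(\lambda)=\E\bigl[\tanh^2(\sqrt\gamma\,Y_{12})\bigr].
\]
Symmetry $s\mapsto -s$, $G\mapsto -G$ and evenness of $\tanh^2$ let me condition on $s=+1$:
\[
p_N(\lambda)=\E\bigl[\tanh^2(\gamma+\sqrt\gamma\,G)\bigr],\qquad G\sim\cN(0,1).
\]
I would then use the Nishimori identity $\E[\tanh^2]=\E[\tanh]$, valid because $\E[s\,m]=\E[m^2]$ for $m=\E[s\mid Y]$, and work with whichever form is more convenient.

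Next I would expand in $\gamma\in[0,1/N]\subset[0,1]$. Write $x=\gamma+\sqrt\gamma\,G$ and use $\tanh^2 x=x^2-R(x)$ with $0\le R(x)\le \tfrac23x^4$, which follows from $x-\tfrac{x^3}{3}\le\tanh x\le x$ for $x\ge0$ together with oddness. Then
\[
\E x^2=\gamma^2+\gamma,\qquad \E x^4\le C(\gamma^4+\gamma^2),
\]
so
\[
p_N(\lambda)=\gamma+\gamma^2-\E R(x)=\gamma+\varepsilon_N(\lambda),\qquad |\varepsilon_N(\lambda)|\le \gamma^2+\tfrac23\E x^4\le C\gamma^2,
\]
using $\gamma\le1$. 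This proves \eqref{eq:pN-expansion} with a universal $C$. The only care needed is that the constant is uniform, which is automatic because $\gamma\le 1$ absorbs the higher powers $\gamma^3,\gamma^4$.

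For \eqref{eq:pN-critical-window}, on $[\lambda_0,1]$ we have $\gamma=\lambda^2/N\le 1/N$. Hence
\[
p_N(\lambda)\le \frac1N+\frac{C}{N^2}\le \frac1N+AN^{-4/3}
\]
with $A=C$, since $N^{-2}\le N^{-4/3}$. The bound is even slightly wasteful, and the lower endpoint $\lambda_0$ is not used except to place $\lambda$ in $[0,1]$.

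I expect no real obstacle. The only point to verify carefully is the elementary inequality $0\le x^2-\tanh^2x\le \tfrac23x^4$ for all real $x$. Since $|\tanh x|\le|x|$, we have $x^2-\tanh^2x=(|x|-|\tanh x|)(|x|+|\tanh x|)$. For $|x|\le\sqrt3$ this is at most $\tfrac{|x|^3}{3}\cdot2|x|$. For $|x|>\sqrt3$ it is at most $x^2\le \tfrac{x^4}{3}$.
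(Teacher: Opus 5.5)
Your proposal is correct and follows essentially the same route as the paper: reduce to $p_N(\lambda)=\E[\tanh^2(\gamma+\sqrt\gamma G)]$, compare it with $\E(\gamma+\sqrt\gamma G)^2=\gamma+\gamma^2$ through a quartic bound on $|x^2-\tanh^2 x|$, and then use $\gamma\le 1/N$ in the window. Your explicit inequality $0\le x^2-\tanh^2x\le\tfrac23x^4$ is a slightly cleaner substitute for the paper's Taylor-plus-crude-bound estimate, and it holds for all $x$ without the split at $\sqrt3$; the Nishimori remark is not needed.
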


\begin{proof}
Consider a single edge and let $
S=\theta_1\theta_2\in\{-1,+1\}
$ (which is uniform), and
\[
Y_e=\sqrt\gamma\,S+G_e,
\qquad G_e\sim \cN(0,1).
\]
A direct Bayes calculation gives
\[
\E[S\mid Y=y]
=\frac{e^{-(y-\sqrt\gamma)^2/2}-e^{-(y+\sqrt\gamma)^2/2}}
{e^{-(y-\sqrt\gamma)^2/2}+e^{-(y+\sqrt\gamma)^2/2}}
=\tanh(\sqrt\gamma\,y).
\]
By Lemma \ref{lem:I2equalsm2} applied to $(\theta_1,\theta_2,Y)$,
\[
p_N(\lambda)=\Ichi(\theta_1;\theta_2\mid Y)= \mathbb{E} \left[ \left( \mathbb{E}[\theta_1 \theta_2 \mid Y] \right)^2 \right]= \mathbb{E} \left[ \left( \mathbb{E}[S \mid Y] \right)^2 \right]=\E\bigl[\tanh^2(\sqrt\gamma\,Y)\bigr].
\]
Since $S$ is symmetric and $\tanh^2$ is even,
\[
p_N(\lambda)=\mathbb{E} \left[ \tanh^2(\gamma S + \sqrt{\gamma} G) \right]=\E\bigl[\tanh^2(\gamma + \sqrt\gamma G)\bigr].
\]
We claim that there exists a universal constant $C_0$ such that for every $x\in\R$,
\begin{equation}\label{eq:tanh-global}
\bigl|\tanh^2 x - x^2\bigr|\le C_0 x^4.
\end{equation}
Indeed, for $|x|\le 1$, \eqref{eq:tanh-global} follows from the Taylor expansion
\[
\tanh^2 x = x^2 - \frac23 x^4 + O(x^6),
\]
while for $|x|\ge 1$ we simply use
\[
\bigl|\tanh^2 x - x^2\bigr|\le 1+x^2\le 2x^4.
\]
Applying \eqref{eq:tanh-global} with $x=\gamma+\sqrt\gamma G$, we get
\[
\bigl|p_N(\lambda)-\E(\gamma+\sqrt\gamma G)^2\bigr|
\le C_0\,\E(\gamma+\sqrt\gamma G)^4.
\]
Now
\[
\E(\gamma+\sqrt\gamma G)^2 = \gamma^2 + \gamma,
\]
and for $0\le \gamma\le 1$, a direct expansion using $\E G=0$, $\E G^2=1$, $\E G^4=3$ yields
\[
\E(\gamma+\sqrt\gamma G)^4=\gamma^4 + 4\gamma^3 \mathbb{E}G + 6\gamma^3 \mathbb{E}G^2 + 4\gamma^{5/2} \mathbb{E}G^3 + \gamma^2 \mathbb{E}G^4 = \gamma^4 + 6\gamma^3 + 3\gamma^2 \le 10\gamma^2.
\]

\begin{align*}
|p_N(\lambda) - (\gamma + \gamma^2)| &= |\mathbb{E}[\tanh^2(\gamma + \sqrt{\gamma} G)] - \mathbb{E}[(\gamma + \sqrt{\gamma} G)^2]| \\
&= |\mathbb{E}[\tanh^2(\gamma + \sqrt{\gamma} G) - (\gamma + \sqrt{\gamma} G)^2]| \\
&\le \mathbb{E}[|\tanh^2(\gamma + \sqrt{\gamma} G) - (\gamma + \sqrt{\gamma} G)^2|] \\
&\le C_0 \mathbb{E}[(\gamma + \sqrt{\gamma} G)^4]\le 10 C_0 \gamma^2.
\end{align*}

So,
\[
|p_N(\lambda) - \gamma| \le |p_N(\lambda) - (\gamma + \gamma^2)| + \gamma^2 \le (10 C_0 + 1)\gamma^2.
\]

Hence
\[
p_N(\lambda)=\gamma + O(\gamma^2),
\]
with an absolute implied constant, proving \eqref{eq:pN-expansion}.

For the second claim, if $\lambda\in[1-N^{-1/3},1]$, then $\gamma=\lambda^2/N\le 1/N$, so by \eqref{eq:pN-expansion},
\[
p_N(\lambda)\le \frac1N + \frac{C}{N^2}\le \frac1N + C N^{-4/3}
\]

Thus, \eqref{eq:pN-critical-window} holds with $A=C$.
\end{proof}

Then we obtain the critical window upper bound.

\begin{lemma}\label{prop:derivative-bound}
There exists a universal constant $C<\infty$ such that for every $N\ge 1$ and every $\lambda\in[\lambda_0,1]$,
\[
\frac{d}{d\lambda}\wt B_N(\lambda)\le C N^{1/3}.
\]
Consequently,
\[
\wt B_N(1)\le \wt B_N(\lambda_0)+C.
\]
\end{lemma}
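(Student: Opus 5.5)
We combine Proposition~\ref{prop:derivative} with Corollary~\ref{cor:AB-specialized} and Lemma~\ref{lem:singleedge-expansion}, and reduce everything to a susceptibility estimate for a near-critical Erd\H{o}s--R\'enyi graph. For $\lambda\in[\lambda_0,1]$ we have $\lambda\le 1$. By exchangeability of the vertices under the law of $\mathcal G_{N,p}$,
\[
\frac{d}{d\lambda}\wt B_N(\lambda)
\le \frac{1}{N}\sum_{1\le u<v\le N}\Pbb\bigl(u\leftrightarrow v\text{ in }\mathcal G_{N,p_N(\lambda)}\bigr)
=\frac{N-1}{2}\,\Pbb\bigl(1\leftrightarrow 2\bigr)
\le \frac12\,\E|\mathcal C(1)|,
\]
where $\mathcal C(1)$ is the cluster of vertex $1$ in $\mathcal G_{N,p_N(\lambda)}$. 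The identity $\sum_{v\ne 1}\Pbb(1\leftrightarrow v)=\E|\mathcal C(1)|-1$ gives the last step. Since connection probabilities are monotone in $p$, Lemma~\ref{lem:singleedge-expansion} lets us replace $p_N(\lambda)$ by $p^*=\frac1N+AN^{-4/3}$, uniformly over $\lambda\in[\lambda_0,1]$. The task is therefore to show that the expected cluster size satisfies $\chi_N(p^*):=\E|\mathcal C(1)|\le CN^{1/3}$.

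This is the main step: the critical-window susceptibility bound. I would prove it by comparison with a branching process, keeping the $N^{1/3}$ scaling explicit. Explore $\mathcal C(1)$ by breadth-first search. Each explored vertex reveals at most $\mathrm{Bin}(N,p^*)$ new neighbours, so $|\mathcal C(1)|$ is stochastically dominated by the total progeny $T$ of a Galton--Watson tree with offspring law $\mathrm{Bin}(N,p^*)$, whose mean is $\mu=1+AN^{-1/3}>1$. Because $\mu>1$, $\E T$ is infinite, so this domination must be truncated: $|\mathcal C(1)|$ is at most $N$, and the offspring law depletes as vertices are used.

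My first attempt would use the explicit tree-counting bound
\[
\Pbb(|\mathcal C(1)|=k)\le \binom{N-1}{k-1}k^{k-2}p^{k-1}(1-p)^{k(N-k)},
\]
which via Stirling is at most $C k^{-3/2}\exp\bigl(-k(\mu-1-\log\mu)-\tfrac{k^2}{2N}+O(k^3/N^2)+\dots\bigr)$. With $\mu-1=AN^{-1/3}$, the exponent is roughly $-\frac{k^2}{2N}+\frac{A k^2}{N^{4/3}}\cdot O(1)-O(k^3/N^2)$. Its dominant negative part for $k\gtrsim N^{2/3}$ is $-k^3/(6N^2)$, which comes from $(1-p)^{k(N-k)}$ combined with the $\binom{N-1}{k-1}$ correction $\prod(1-j/N)$. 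So the summand behaves like $k^{-1/2}\exp(-ck^3/N^2+Ck^2/N^{4/3})$. Summing $k\cdot\Pbb(|\mathcal C(1)|=k)\lesssim k^{-1/2}$ up to $k\asymp N^{2/3}$ gives $O(N^{1/3})$, and the tail beyond $N^{2/3}$ is handled by the cubic decay.

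As a fallback, I would cite the standard result (Bollob\'as, {\L}uczak, Janson--{\L}uczak--Ruci\'nski) that in the window $p=(1+\varepsilon)/N$ with $\varepsilon=O(N^{-1/3})$, the susceptibility is $\Theta(N^{1/3})$.

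Integration then gives the consequence. By the bound just established,
\[
\wt B_N(1)-\wt B_N(\lambda_0)=\int_{\lambda_0}^1\frac{d}{d\lambda}\wt B_N(\lambda)\,d\lambda\le CN^{1/3}\cdot N^{-1/3}=C.
\]
Here $\wt B_N$ is differentiable in $\lambda$ by Proposition~\ref{prop:derivative}. Together with Proposition~\ref{prop:leftendpoint}, this yields $\wt B_N(1)\le\frac1{12}\log N+O(1)$.
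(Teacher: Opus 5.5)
Your reduction is the same as the paper's: Proposition~\ref{prop:derivative} with $\lambda\le 1$, Corollary~\ref{cor:AB-specialized}, the identity $(N-1)\Pbb(1\leftrightarrow 2)=\E|\mathcal C(1)|-1$, monotonicity in $p$ with $p_N(\lambda)\le \frac1N+AN^{-4/3}$, and integration over an interval of length $N^{-1/3}$. Everything therefore hinges on $\E|\mathcal C(1)|\le CN^{1/3}$ at $p^*$, and your self-contained derivation of this does not work.

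The problem is in the expansion of the spanning-tree bound. One factor is $\binom{N-1}{k-1}k^{k-2}p^{k-1}\approx(\sqrt{2\pi}\,\mu)^{-1}k^{-3/2}(e\mu)^k\exp\bigl(-\tfrac{k^2}{2N}-\tfrac{k^3}{6N^2}\bigr)$. The boundary factor is $(1-p)^{k(N-k)}\approx\exp\bigl(-\mu k+\mu k^2/N\bigr)$, and you dropped its $+\mu k^2/N$. The exponent is really $-k(\mu-1-\log\mu)+(\mu-\tfrac12)\tfrac{k^2}{N}-\tfrac{k^3}{6N^2}$, whose quadratic term is $+k^2/(2N)$ near $\mu=1$, not $-k^2/(2N)$.

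So the bound exceeds $1$ for $\sqrt N\ll k\ll N$; at $k=N^{2/3}$ it is of order $k^{-3/2}e^{N^{1/3}/2}$. It only controls $k\lesssim\sqrt N$, which contributes $O(N^{1/4})$ and misses exactly the range that produces $N^{1/3}$.

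The loss is intrinsic to union-bounding over spanning trees. Given a tree on $S$, the other internal edges are unconstrained, so $\E[\#\text{spanning trees of }G[S]]$ exceeds $\Pbb(G[S]\text{ connected})$ by roughly $(1-p)^{-\binom k2}\approx e^{k^2/(2N)}$. A repair must keep the factor $(1-p)^{\binom k2-(k-1+\ell)}$ and sum over the excess $\ell$ using Wright-type bounds on connected graphs with $k$ vertices and $k-1+\ell$ edges. The excess sum is then of order $e^{k^3/(24N^2)}$, which $e^{-k^3/(6N^2)}$ beats. This is the classical Bollob\'as--{\L}uczak analysis, not a short computation. As you note yourself, the branching-process comparison also fails because $\mu>1$.

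Hence this step has to rest on your fallback citation, and that is exactly what the paper does. It invokes \cite[Corollary~5.3]{JansonSpencer2007} with $q=2$ to get $\E\sum_{\mathcal C}|\mathcal C|^2\le C_AN^{4/3}$ at $p^*$, whence $\E|\mathcal C(1)|=N^{-1}\E\sum_{\mathcal C}|\mathcal C|^2\le C_AN^{1/3}$. With the tree-counting paragraph replaced by that citation, your argument coincides with the paper's proof.
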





\begin{proof}
Fix $\lambda\in[\lambda_0,1]$. By Proposition \ref{prop:derivative} and symmetry of the complete graph,
\begin{equation}
\frac{d}{d\lambda}\wt B_N(\lambda)
=\frac{\lambda}{N}\binom N2\,\E_{\mathbb P_\lambda}\bigl[m_{12}^{(\lambda)}(Y)^2\bigr].
\label{eq:derivative}
\end{equation}

By Corollary \ref{cor:AB-specialized},
\[
\E_{\mathbb P_\lambda}\bigl[m_{12}^{(\lambda)}(Y)^2\bigr]
\le \Pbb\bigl(1\leftrightarrow 2 \text{ in } \mathcal{G}_{N,p_N(\lambda)}\bigr).
\]
By Lemma \ref{lem:singleedge-expansion}, 
\[
p_N(\lambda)\le \frac1N + A N^{-4/3}
\]
for some universal $A$. Since connectivity is monotone non-decreasing with respect to edge addition, by \cite[Corollary~5.3]{JansonSpencer2007} with \(q=2\), applied at the critical window with
parameter \(A\), 

$$\mathbb{E} \sum_{\mathcal{C} \in \mathrm{Comp}(\mathcal{G}_{N, p_N(\lambda)})} |\mathcal{C}|^2 \le \mathbb{E} \sum_{\mathcal{C} \in \mathrm{Comp}(\mathcal{G}_{N, \frac1N + A N^{-4/3}})} |\mathcal{C}|^2\le C_A N^{4/3} .$$
where \(\mathrm{Comp}(\mathcal G)\) denotes the set of connected components
of \(\mathcal G\).
Let $\mathcal C_{\mathcal{G}}(1)$ denotes the connected component of vertex $1$, i.e.
$$\sum_{v=1}^N |\mathcal{C}_{\mathcal{G}}(v)| = \sum_{\mathcal{C} \in \mathrm{Comp}(\mathcal{G})} |\mathcal{C}|^2.$$

Since the vertices are exchangeable, $\E \sum_{v=1}^{N} |\mathcal{C}_{\mathcal{G}}(v)| = N \E |\mathcal{C}_{\mathcal{G}}(1)|.$ Thus
\[
\E |\mathcal C_{\mathcal{G}}(1)| = \frac1N \E\sum_{\mathcal C} |\mathcal C|^2 \le C_A N^{1/3}.
\]

Moreover, observe that
\[
|\mathcal C_{\mathcal{G}}(1)| = 1 + \sum_{v=2}^N \1_{\{1\leftrightarrow v\}},
\]
so by symmetry,
\[
\E |\mathcal C_{\mathcal{G}}(1)| = 1 + (N-1)\Pbb(1\leftrightarrow 2 \text{ in } \mathcal{G}).
\]
Hence, for two distinct fixed vertices $1,2$,
\[
\Pbb(1\leftrightarrow 2 \text{ in } \mathcal{G}_{N,p_N(\lambda)})=\frac{\E |\mathcal C_{\mathcal{G}}(1)|-1}{N-1}\le C_A N^{-2/3}.
\]


where $C_A$ depends only on the fixed constant $A$ and is independent of $N$ and $\lambda$. Substituting back into \eqref{eq:derivative},

\[
\frac{d}{d\lambda}\wt B_N(\lambda)
\le \frac{1}{N}\binom N2\, C_A N^{-2/3}
\le C N^{1/3}
\]
for a universal $C$.
Integrating over $[\lambda_0,1]$ and using $1-\lambda_0=N^{-1/3}$ gives

\[
\begin{aligned}
\wt B_N(1)-\wt B_N(\lambda_0)
&= \int_{\lambda_0}^1 \frac{d}{d\lambda} \widetilde{B}_N(\lambda) \, d\lambda \le \int_{\lambda_0}^1 C N^{1/3}\,d\lambda \\
&= C N^{1/3} (1 - \lambda_0) = C N^{1/3} N^{-1/3} = C.
\end{aligned}
\]
\end{proof}

\begin{proof}[Proof of upper bound]
For $N=1$ the claim is immediate after enlarging the final constant, so we may assume $N\ge 2$.
By Lemma \ref{prop:forwardKL} and \ref{prop:Bdecomp},
\[
\Var\bigl(F_N(\beta_c)\bigr)
\le 2B_N(\beta_c)
= 2\left(\frac14+\wt B_N(1)\right).
\]
By Lemma \ref{prop:derivative-bound} and Proposition \ref{prop:leftendpoint},
\[
\wt B_N(1)
\le \wt B_N(\lambda_0)+C_1
\le \frac1{12}\log N + C_2.
\]
Therefore,
\[
\Var\bigl(F_N(\beta_c)\bigr)
\le 2\left(\frac14 + \frac1{12}\log N + C_2\right)
= \frac16\log N + \left(\frac12+2C_2\right).
\]
\end{proof}

\subsection{Proof of lower bound}\label{subsec:critical-lower-bound}

\medskip
\noindent

The argument of lower bound combines the Gaussian convexity with the entropy estimate above, and deal with the remaining negative moment to a one replica spherical integral and a dimension shift identity for the GOE eigenvalue density.

\begin{theorem}\label{thm:critical-lower-bound}
There exists a universal constant $C<\infty$ such that for every integer $N\ge 2$,
\[
\Var\bigl(F_N(\beta_c)\bigr)\ge \frac16\log N-C.
\]
\end{theorem}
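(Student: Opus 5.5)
The plan is to run the convexity principle of Theorem~\ref{thm:Chen} on the negative side $s<0$, and to combine it with the entropy bound of Section~\ref{subsec:critical-upper-bound}. By Lemma~\ref{lem:FNconvex}, $F_N(\beta_c;g)$ is convex in $g$ with exponential moments of all orders, so $\mathcal K:=\mathcal K_{N,\beta_c}$ is convex on all of $\R$, and $\mathcal K'(0)=\tfrac12\Var(F_N(\beta_c))$ as in \eqref{eq:phi0prime}. Two slope inequalities are available:
\[
\mathcal K'(0)\ \ge\ \frac{\mathcal K(0)-\mathcal K(-2)}{2},
\qquad
\mathcal K(0)\ \ge\ \mathcal K(1)-\mathcal K'(1).
\]
Here $\mathcal K(-2)=-\tfrac12\log\E Z_N^{-2}$, $\mathcal K(1)=\log\E Z_N$ and $\mathcal K'(1)=2B_N(\beta_c)$. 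Substituting these gives
\[
\Var\bigl(F_N(\beta_c)\bigr)\ \ge\ \frac12\log\E\Bigl[\Bigl(\frac{\E Z_N(\beta_c)}{Z_N(\beta_c)}\Bigr)^{2}\Bigr]-2B_N(\beta_c).
\]
The upper-bound section already gives $2B_N(\beta_c)\le\frac16\log N+O(1)$, via Lemma~\ref{prop:Bdecomp}, Proposition~\ref{prop:leftendpoint} and Lemma~\ref{prop:derivative-bound}. Everything therefore reduces to a lower bound on the inverse second moment of the normalized partition function. The exponent this method needs is $\E[(\E Z_N/Z_N)^2]\ge cN^{2/3}$. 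If the sharp inverse-moment estimate only delivers a smaller power, such as the $N^{1/2}$ of a log-normal heuristic with variance $\frac16\log N$, this choice of chord is not tight. In that case I would re-optimize the pair of points used in the chord ($s=-2$ against some $s=-q$, and $s=1$), keeping the same structure of argument.

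Next, the diagonal part is removed using \eqref{eq:Zdecomp}. Since $Z_N=e^{\beta\Delta_N}\wt Z_N(1)$ with $\Delta_N$ independent of $J$, the factor $e^{\beta\Delta_N}$ contributes only $O(1)$ to the inverse moment. So it suffices to lower bound $\E[\wt Z_N(1)^{-2}]\,(\E\wt Z_N(1))^2$. To access random-matrix structure, I would use Haar averaging. Write $\wt H_N(\sigma;J)=\frac12\langle\sigma,W\sigma\rangle$, where $W$ is a GOE-type matrix with zero diagonal. The law of $W$ is invariant under conjugation by a Haar orthogonal $O$. The map $x\mapsto x^{-2}$ is convex, so by Jensen over $O$,
\[
\E\,\wt Z_N^{-2}\ \ge\ \E\bigl(\E_O\wt Z_N(O^{\top}WO)\bigr)^{-2}.
\]
Averaging $\sigma\mapsto O\sigma$ over $O$ replaces the cube sum by $2^N$ times a spherical integral $\int_{S^{N-1}(\sqrt N)}e^{\frac12\langle x,Wx\rangle}\,dx$, up to the diagonal correction. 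This yields a lower bound by the inverse second moment of the spherical SSK partition function at criticality.

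The spherical partition function then has a one-replica contour representation,
\[
\int_{S^{N-1}} e^{\frac N2\langle x,Wx\rangle}\,dx\ \propto\ \oint e^{Nz/2}\prod_{i=1}^N (z-\mu_i)^{-1/2}\,dz.
\]
At $\beta_c$ the saddle point sits at the spectral edge, so the integral is governed by the top eigenvalue $\mu_1$ and the gaps $\mu_1-\mu_i$ on the soft-edge scale $N^{-2/3}$. The key term is the expectation of an inverse power of a product of factors $(\mu_1-\mu_i)$, or of $\mu_1$ measured from the edge. I would express this through the joint GOE density. The inverse powers in the dimension-$N$ density are then absorbed by the dimension-shift identity, which rewrites the density with an extra weight $\prod_i|\mu_1-\mu_i|^{-1}$ as a constant times the density in dimension $N-1$. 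The constant is a ratio of Selberg-type normalizations, and I expect it to produce the polynomial factor in $N$.

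The main obstacle is this last step. Controlling the contour integral uniformly near the soft edge is hard. One has to use soft-edge tail bounds for $\mu_1$ and for the second gap, showing that atypically small gaps of order $N^{-2/3}$ occur with probability bounded below. The Selberg ratios must also be tracked sharply enough to get the exact power of $N$, since any power loss shifts the constant $\frac16$. My first attempt would be to restrict to the event that $N^{2/3}(\mu_1-2)$ and $N^{2/3}(\mu_1-\mu_2)$ are both of order one. This event has probability bounded below by Tracy--Widom tightness. On it, the inverse moment can be bounded below by the explicit dimension-shifted density, and then reinserted into the displayed inequality.
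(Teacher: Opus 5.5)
Your skeleton matches the paper's: three-point convexity at $s=-2,0,1$, the entropy bound, Haar--Jensen to the spherical model, the contour formula, soft-edge input, and a dimension-shift identity. But two steps are wrong as stated.

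\textbf{The entropy term is mis-scaled, which makes your stated target unreachable.} The slope $\mathcal K'(1)=\Lambda_{N,\beta_c}'(1)-\Lambda_{N,\beta_c}(1)=\frac{\E[Z_NF_N]}{\E Z_N}-\log\E Z_N$ is exactly the quantity that Lemma~\ref{prop:Bdecomp} evaluates as $\frac14+\wt B_N(1)$. The display in Lemma~\ref{prop:forwardKL} is misleading here, but its proof gives $\frac12\Var\le\Lambda'(1)-\Lambda(1)$: the factor $2$ belongs to the variance, not to the entropy. So Proposition~\ref{prop:leftendpoint} and Lemma~\ref{prop:derivative-bound} give $\mathcal K'(1)\le\frac1{12}\log N+O(1)$, not $\frac16\log N$. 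Your own log-normal heuristic says the same thing: if $F_N\sim\mathcal N(m,v)$ then $\mathcal K(s)=m+sv/2$, so $\mathcal K'\equiv v/2$.

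The required input is therefore $(\E Z_N)^2\,\E Z_N^{-2}\ge cN^{1/2}$. Your target $cN^{2/3}$ is in fact false. The same three-point inequality, combined with the upper bound $\Var\le\frac16\log N+O(1)$ and $\mathcal K'(1)\le\frac1{12}\log N+O(1)$, forces $(\E Z_N)^2\,\E Z_N^{-2}\le CN^{1/2}$. Re-optimizing the chord does not rescue this. If you replace $-2$ by $-q$, an excess of $\frac1{12}\log N$ in your bound for $\mathcal K'(1)$ costs $\frac1{6q}\log N$. Even with a sharp estimate of $\E Z_N^{-q}$, you then get only $(1-\frac1q)\frac16\log N$.

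\textbf{In the inverse-moment step the signs are reversed.} Take absolute values on the line $z=\gamma+it$ with $\gamma>\lambda_1$, and write $a_j=\gamma-\lambda_j$ for the eigenvalues $\lambda_j$ of $W$. This gives $\mathcal Z_N^{\mathrm{sph}}\le c_Ne^{N\gamma/2}\det(\gamma I-W)^{-1/2}\int_{\R}\prod_j(1+t^2/a_j^2)^{-1/4}\,dt$ with $c_N\asymp\sqrt N e^{-N/2}$. So $(\mathcal Z_N^{\mathrm{sph}})^{-2}$ produces the \emph{positive} power $\det(\gamma I-W)$, not an inverse power of the gaps $\mu_1-\mu_i$.
\begin{itemize}
\item To take expectations, $\gamma=\gamma_N=2+O(N^{-2/3})$ must be deterministic; at $\gamma=\mu_1$ the determinant vanishes.
\item The factor $\det(\gamma_N I-W)$ is absorbed by \emph{inserting} an eigenvalue at $\gamma_N$, i.e.\ passing to dimension $N+1$ (Lemma~\ref{lem:lower-eigenvalue-insertion}), not by removing one.
\item The factor $N^{1/2}$ then has three sources. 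The contour contributes $N^{1/3}$: a $t$-integral of size $N^{-2/3}$ times the prefactor $\sqrt N$, inverted and squared. The Mehta ratio $\mathfrak C_{N+1,N}/((N+1)\mathfrak C_{N,N})\asymp N^{-1/2}e^{-N/2}$ contributes $N^{-1/2}$. The density $r_N(\gamma_N)\gtrsim N^{2/3}$ of the top eigenvalue of the $(N+1)$-ensemble contributes $N^{2/3}$.
\item Your event is too small. Keeping only the factors for $\mu_1,\mu_2$ leaves the bound $(1+t^2/r^2)^{-1/2}$, which is not integrable in $t$. You need three eigenvalues within $O(N^{-2/3})$ of $\gamma_N$, giving exponent $3/4>1/2$.
\item A zero-diagonal matrix is not orthogonally invariant. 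Keep the $\sigma$-independent diagonal, so that $\beta_cH_N(\sigma;g)=\frac12\sigma^{\mathsf T}W\sigma$ with $W$ exactly GOE before you Haar-average.
\end{itemize}
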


\begin{remark}\label{rem:du-huang}
Du and Huang \cite{DuHuang2026} prove the same critical variance asymptotic, together with a central limit theorem and sharp estimates for the critical overlap. Our lower bound argument also uses the spherical SK partition function and GOE soft edge asymptotics, but the strategy is different. We apply the Gaussian convexity principle for arbitrary real parameters, thereby reducing the proof of the variance lower bound to a lower bound on the inverse second moment \(\mathbb E\mathcal Z_N^{-2}\). We then compare this inverse moment with its spherical counterpart via Haar averaging and extract the required polynomial factor from a one replica contour formula combined with a dimension shift identity for GOE eigenvalue densities. 

\end{remark}

We first rewrite the critical model in GOE notation. For $1\le i<j\le N$, set $W_{ij}=W_{ji}=\frac{g_{ij}+g_{ji}}{\sqrt{2N}},$ and $W_{ii}=\sqrt{\frac2N}\,g_{ii}.$ Then $W$ is a $N\times N$ GOE matrix. Its upper triangular entries are independent with $W_{ij}\sim\mathcal N(0,N^{-1})$ for $i<j,$ and $W_{ii}\sim\mathcal N(0,2N^{-1}).$
 
 Moreover, for every $\sigma\in\Sigma_N$,
\begin{align*}
\frac12\sigma^{\mathsf T}W\sigma
&=\sum_{1\le i<j\le N}W_{ij}\sigma_i\sigma_j+\frac12\sum_{i=1}^N W_{ii}\\
&=\frac1{\sqrt{2N}}\sum_{1\le i<j\le N}(g_{ij}+g_{ji})\sigma_i\sigma_j
  +\frac1{\sqrt{2N}}\sum_{i=1}^N g_{ii}\\
&=\beta_c H_N(\sigma;g).
\end{align*}
Define the normalized partition function and its logarithm by
\[
\mathcal Z_N(W)=2^{-N}\sum_{\sigma\in\Sigma_N}
\exp\left\{\frac12\sigma^{\mathsf T}W\sigma\right\},
\]
and $$\mathcal F_N(W)=\log\mathcal Z_N(W).$$
Thus
\begin{equation}\label{eq:lower-full-to-goe}
F_N(\beta_c)=N\log 2+\mathcal F_N,
\qquad
\Var\bigl(F_N(\beta_c)\bigr)=\Var(\mathcal F_N).
\end{equation}
For a fixed $\sigma$, the Gaussian variable $\frac12\sigma^{\mathsf T}W\sigma$ has variance
\[
\sum_{1\le i<j\le N}\frac1N+\frac14\sum_{i=1}^N\frac2N
=\frac{N-1}{2}+\frac12=\frac N2.
\]
It follows that
\begin{equation}\label{eq:lower-first-moment}
\E\mathcal Z_N=e^{N/4}.
\end{equation}

By Lemma~\ref{prop:Bdecomp}, Proposition~\ref{prop:leftendpoint}, and Lemma~\ref{prop:derivative-bound}, there is a universal constant $C$ such that
\begin{equation}\label{eq:lower-entropy-input}
B_N(\beta_c)=\frac14+\wt B_N(1)\le \frac1{12}\log N+C.
\end{equation}

For $s\in\mathbb R$, now set
\[
\widehat{\mathcal K}_N(s)=
\begin{cases}
\dfrac1s\log\E e^{s\mathcal F_N},&s\ne0,\\[1ex]
\E\mathcal F_N,&s=0.
\end{cases}
\]

Then we have the following result using \cite[Theorem~1]{Chen2024GaussianConvexity} at the negative replica value $s=-2$.

\begin{lemma}\label{lem:lower-three-point-convexity}
The function $\widehat{\mathcal K}_N$ is finite and convex on $\mathbb R$, and
\begin{equation}\label{eq:lower-three-point-convexity}
\Var(\mathcal F_N)
\ge
\widehat{\mathcal K}_N(1)-\widehat{\mathcal K}_N(-2)-\widehat{\mathcal K}_N'(1).
\end{equation}
Furthermore,
\begin{equation}\label{eq:lower-convexity-reduction}
\Var(\mathcal F_N)
\ge
\frac N4+\frac12\log\E\mathcal Z_N^{-2}-B_N(\beta_c).
\end{equation}
\end{lemma}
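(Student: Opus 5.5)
The plan is to apply Theorem~\ref{thm:Chen} to the convex function $W\mapsto\mathcal F_N(W)$, to read off two supporting-line inequalities for the convex function $\widehat{\mathcal K}_N$ at the points $0$ and $1$, and then to evaluate each term explicitly.

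\emph{Convexity and finiteness.} By \eqref{eq:lower-full-to-goe}, $\mathcal F_N = F_N(\beta_c;g)-N\log 2$. By Lemma~\ref{lem:FNconvex} this is a convex function of the standard Gaussian vector $g$. We need $\E e^{s\mathcal F_N}<\infty$ for every $s\in\R$, including $s<0$.
\begin{itemize}
\item For $s>0$, use the bound in Lemma~\ref{lem:FNconvex}.
\item For $s<0$, use Jensen's inequality under the uniform measure on $\Sigma_N$. This gives $\mathcal F_N\ge 2^{-N}\sum_\sigma \tfrac12\sigma^{\mathsf T}W\sigma=\tfrac12\operatorname{tr}W$, which is a centered Gaussian with variance $1$. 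Hence $\E e^{s\mathcal F_N}\le \E e^{s\operatorname{tr}W/2}<\infty$.
\end{itemize}
Theorem~\ref{thm:Chen} then shows that $\widehat{\mathcal K}_N$ is convex on $\R$. Because all exponential moments are finite, $\Lambda(s)=\log\E e^{s\mathcal F_N}$ is real-analytic on $\R$. So $\widehat{\mathcal K}_N$ is differentiable everywhere, including at $0$. The expansion used in the proof of Lemma~\ref{prop:forwardKL} gives $\widehat{\mathcal K}_N'(0)=\tfrac12\Var(\mathcal F_N)$.

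\emph{Three-point inequality.} Convexity gives two bounds.
\begin{itemize}
\item The derivative at $0$ dominates the slope of the chord over $[-2,0]$:
\[
\tfrac12\Var(\mathcal F_N)=\widehat{\mathcal K}_N'(0)\ge \tfrac12\bigl(\widehat{\mathcal K}_N(0)-\widehat{\mathcal K}_N(-2)\bigr).
\]
\item The tangent line at $1$, evaluated at $0$, gives
\[
\widehat{\mathcal K}_N(0)\ge \widehat{\mathcal K}_N(1)-\widehat{\mathcal K}_N'(1).
\]
\end{itemize}
Combining the two yields \eqref{eq:lower-three-point-convexity}.

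\emph{Identification of the terms.}
\begin{itemize}
\item By \eqref{eq:lower-first-moment}, $\widehat{\mathcal K}_N(1)=\log\E\mathcal Z_N=N/4$.
\item By definition, $\widehat{\mathcal K}_N(-2)=-\tfrac12\log\E\mathcal Z_N^{-2}$.
\item As in the proof of Lemma~\ref{prop:forwardKL}, $\widehat{\mathcal K}_N'(1)=\Lambda'(1)-\Lambda(1)=\E[\mathcal Z_N\mathcal F_N]/\E\mathcal Z_N-\log\E\mathcal Z_N$.
\end{itemize}
The last expression is invariant under the shift $\mathcal F_N\mapsto\mathcal F_N+N\log 2$, because the term $N\log2$ appears in both pieces and cancels. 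It is therefore exactly the entropy quantity $\E[Z_NF_N]/\E Z_N-\log\E Z_N$ at $\beta_c$, which Lemma~\ref{prop:Bdecomp} and the proof of Proposition~\ref{prop:leftendpoint} treat as $B_N(\beta_c)=\tfrac14+\wt B_N(1)$. Substituting these three values gives \eqref{eq:lower-convexity-reduction}.

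\emph{Main obstacle.} I expect the delicate point to be bookkeeping rather than analysis. One must make sure that the quantity subtracted is $B_N$ as it is actually used in Lemma~\ref{prop:Bdecomp} and \eqref{eq:lower-entropy-input}, namely $\Lambda'(1)-\Lambda(1)$. Lemma~\ref{prop:forwardKL} writes this expression with the label "$:=2B_N$", which does not fit the normalization used afterwards. I would state explicitly that $\widehat{\mathcal K}_N'(1)=B_N(\beta_c)$ in the latter normalization.

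A second point to verify is the negative-moment finiteness needed to apply Theorem~\ref{thm:Chen} on all of $\R$. The Jensen lower bound $\mathcal F_N\ge\tfrac12\operatorname{tr}W$ handles this cleanly.
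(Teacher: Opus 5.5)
Your proposal is correct and follows the paper's proof. It uses the same Jensen bound $\mathcal F_N\ge\frac12\operatorname{Tr}W$ for $s<0$, and the same two convexity inequalities: the chord on $[-2,0]$ against $\widehat{\mathcal K}_N'(0)=\frac12\Var(\mathcal F_N)$, and your tangent at $1$ evaluated at $0$, which is the paper's secant bound on $[0,1]$. It also makes the same identification $\widehat{\mathcal K}_N'(1)=\Lambda'(1)-\Lambda(1)=B_N(\beta_c)$, and you are right that the label ``$:=2B_N$'' in Lemma~\ref{prop:forwardKL} clashes with how $B_N$ is used afterwards. The only slip is cosmetic: $\frac12\operatorname{Tr}W=\frac{1}{\sqrt{2N}}\sum_i g_{ii}$ has variance $\frac12$, not $1$, which does not affect integrability.
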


\begin{proof}
View $W$ as a fixed linear image of the $N(N+1)/2$ independent standard Gaussian coordinates from which its entries are constructed. Since $\mathcal F_N$ is the logarithm of a finite sum of exponentials of affine functions of these coordinates, it is convex.

We next verify the exponential integrability at every real value of $s$. The upper bound
\[
\mathcal F_N(W)
\le
\sum_{1\le i<j\le N}|W_{ij}|+\frac12\sum_{i=1}^N|W_{ii}|
\]
shows that $\E e^{s\mathcal F_N}<\infty$ for $s>0$. For the negative half-line, Jensen's inequality with respect to the uniform measure on $\Sigma_N$ gives
\begin{equation}
\begin{split}
\mathcal F_N(W)
&=\log\left(2^{-N}\sum_{\sigma\in\Sigma_N}
 e^{\sigma^{\mathsf T}W\sigma/2}\right)\\
&\ge 2^{-N}\sum_{\sigma\in\Sigma_N}\frac12\sigma^{\mathsf T}W\sigma
=\frac12\operatorname{Tr}W.
\end{split}
\label{eq:lower_bound}
\end{equation}
The last equality of \eqref{eq:lower_bound} follows because $2^{-N}\sum_\sigma\sigma_i\sigma_j=0$ for $i\ne j$ and equals one for $i=j$. Hence, if $s<0$,
\[
e^{s\mathcal F_N}\le e^{s\operatorname{Tr}W/2},
\]

note that $\operatorname{Tr}W$ is Gaussian, then $e^{s\operatorname{Tr}W/2}$ is integrable. Therefore $\widehat{\mathcal K}_N$ is finite on $\mathbb R$, and the Theorem \ref{thm:Chen} implies its convexity on $\mathbb R$.

Since $\mathcal F_N$ has exponential moments in a neighborhood of the origin, its logarithmic moment generating function is analytic there. Expanding at zero gives
\[
\log\E e^{s\mathcal F_N}
=s\E\mathcal F_N+\frac{s^2}{2}\Var(\mathcal F_N)+O(s^3),
\]
and consequently
\begin{equation}\label{eq:lower-Kprime-zero}
\widehat{\mathcal K}_N'(0)=\frac12\Var(\mathcal F_N).
\end{equation}
For a differentiable convex function, every secant slope ending at a point is bounded above by the derivative at that point. Applying this on $[-2,0]$ and $[0,1]$ yields
\begin{equation}
\widehat{\mathcal K}_N(0)-\widehat{\mathcal K}_N(-2)\le 2\widehat{\mathcal K}_N'(0),
\label{eq:K_bound_1}
\end{equation}
and
\begin{equation}
\widehat{\mathcal K}_N(1)-\widehat{\mathcal K}_N(0)\le \widehat{\mathcal K}_N'(1).
\label{eq:K_bound_2}
\end{equation}
Adding \eqref{eq:K_bound_1}, \eqref{eq:K_bound_2} and using \eqref{eq:lower-Kprime-zero} gives \eqref{eq:lower-three-point-convexity}.

We now compute the three terms in \eqref{eq:lower-three-point-convexity}. Firstly, equation~\eqref{eq:lower-first-moment} gives
\[
\widehat{\mathcal K}_N(1)=\log\E\mathcal Z_N=\frac N4,
\]
whereas
\[
\widehat{\mathcal K}_N(-2)=-\frac12\log\E\mathcal Z_N^{-2}.
\]
Finally,
\begin{align*}
\widehat{\mathcal K}_N'(1)
&=\frac{\E[\mathcal Z_N\mathcal F_N]}{\E\mathcal Z_N}
  -\log\E\mathcal Z_N\\
&=\frac{\E[Z_N(\beta_c)F_N(\beta_c)]}{\E Z_N(\beta_c)}
  -\log\E Z_N(\beta_c)\\
&=B_N(\beta_c).
\end{align*}
In the second equality, the terms produced by $\mathcal Z_N=2^{-N}Z_N(\beta_c)$ and $\mathcal F_N=F_N(\beta_c)-N\log2$ cancel. Substitution into \eqref{eq:lower-three-point-convexity} proves \eqref{eq:lower-convexity-reduction}.
\end{proof}

It remains to obtain the polynomial factor in the negative second moment of $\mathcal Z_N$. Let $\mathbb S_N=\{x\in\mathbb R^N:\|x\|^2=N\},$ and let $\nu_N$ be normalized surface measure on $\mathbb S_N$, and define the spherical partition function
\[
\mathcal Z_N^{\mathrm{sph}}(W)
=\int_{\mathbb S_N}
\exp\left\{\frac12x^{\mathsf T}Wx\right\}\,d\nu_N(x).
\]

\begin{lemma}\label{lem:lower-haar-jensen}
For every $N\ge1$,
\begin{equation}\label{eq:lower-haar-jensen}
\E\mathcal Z_N^{-2}\ge \E\bigl(\mathcal Z_N^{\mathrm{sph}}\bigr)^{-2}.
\end{equation}
\end{lemma}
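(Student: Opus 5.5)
The plan is to exploit the orthogonal invariance of the GOE together with convexity of $t\mapsto t^{-2}$ on $(0,\infty)$. Fix $W$ and let $O$ be a Haar-distributed orthogonal matrix independent of $W$. Since $OWO^{\mathsf T}\overset{d}{=}W$, we have
\[
\E\mathcal Z_N(W)^{-2}=\E_W\E_O\,\mathcal Z_N(O^{\mathsf T}WO)^{-2}.
\]
For each fixed $W$, Jensen's inequality for the convex function $t\mapsto t^{-2}$ gives
\[
\E_O\,\mathcal Z_N(O^{\mathsf T}WO)^{-2}\ge\bigl(\E_O\,\mathcal Z_N(O^{\mathsf T}WO)\bigr)^{-2}.
\]
It therefore suffices to identify the Haar average of $\mathcal Z_N$ with the spherical partition function.

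To do this, write
\[
\mathcal Z_N(O^{\mathsf T}WO)=2^{-N}\sum_{\sigma\in\Sigma_N}\exp\Bigl\{\tfrac12(O\sigma)^{\mathsf T}W(O\sigma)\Bigr\}.
\]
Every $\sigma\in\Sigma_N$ satisfies $\|\sigma\|^2=N$, so $\sigma\in\mathbb S_N$. For a Haar $O$, the vector $O\sigma$ is distributed according to $\nu_N$, by uniqueness of the rotation-invariant probability measure on the sphere. Averaging over $O$ term by term (finitely many terms, each integrable) yields
\[
\E_O\,\mathcal Z_N(O^{\mathsf T}WO)=2^{-N}\sum_{\sigma}\int_{\mathbb S_N}e^{x^{\mathsf T}Wx/2}\,d\nu_N(x)=\mathcal Z^{\mathrm{sph}}_N(W).
\]
Substituting this into the Jensen step and then taking $\E_W$ gives \eqref{eq:lower-haar-jensen}.

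The only points needing care are measure-theoretic. First, the exchange $\E\mathcal Z_N(W)^{-2}=\E_W\E_O\,\mathcal Z_N(O^{\mathsf T}WO)^{-2}$ is justified by Fubini for nonnegative integrands together with the invariance $O^{\mathsf T}WO\overset{d}{=}W$ for each fixed $O$, which holds because the GOE density depends only on $\operatorname{Tr}W^2$ and the law of $W$ is fixed by the stated diagonal/off-diagonal variances. Second, finiteness of the left-hand side was already established in Lemma~\ref{lem:lower-three-point-convexity}, and in any case the inequality holds in $[0,\infty]$. I do not expect a genuine obstacle; the main thing to verify is that the GOE normalization here (off-diagonal variance $N^{-1}$, diagonal $2N^{-1}$) is exactly the orthogonally invariant one, which it is.
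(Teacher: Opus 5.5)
Your proposal is correct and matches the paper's proof: average over an independent Haar $O$, identify $\E_O\mathcal Z_N(O^{\mathsf T}WO)$ with $\mathcal Z_N^{\mathrm{sph}}(W)$ because $O\sigma\sim\nu_N$, apply Jensen to $x\mapsto x^{-2}$, and integrate over $W$ using the orthogonal invariance of the GOE. Your remarks on Fubini and on the variances $N^{-1}$ (off-diagonal) and $2N^{-1}$ (diagonal) being the conjugation-invariant normalization are accurate; they make explicit what the paper leaves implicit.
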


\begin{proof}
Fix $W$ and let $O$ be an independent Haar-distributed orthogonal matrix. For every fixed $\sigma\in\Sigma_N$, the vector $O\sigma$ is uniformly distributed on $\mathbb S_N$. Therefore,
\begin{align*}
\E_O\mathcal Z_N(O^{\mathsf T}WO)
&=2^{-N}\sum_{\sigma\in\Sigma_N}
\E_O\exp\left\{\frac12(O\sigma)^{\mathsf T}W(O\sigma)\right\}\\
&=\mathcal Z_N^{\mathrm{sph}}(W).
\end{align*}
The function $x\mapsto x^{-2}$ is convex on $(0,\infty)$, so conditional Jensen gives
\[
\E_O\bigl[\mathcal Z_N(O^{\mathsf T}WO)^{-2}\bigr]
\ge \bigl(\mathcal Z_N^{\mathrm{sph}}(W)\bigr)^{-2}.
\]
Taking expectation over $W$ and using the orthogonal invariance of the GOE law proves \eqref{eq:lower-haar-jensen}.
\end{proof}

\begin{proposition}\label{prop:lower-spherical-negative-moment}
There are universal constants $c>0$ and $N_0<\infty$ such that, for every $N\ge N_0$,
\begin{equation}\label{eq:lower-spherical-negative-moment}
\E\bigl(\mathcal Z_N^{\mathrm{sph}}\bigr)^{-2}
\ge cN^{1/2}e^{-N/2}.
\end{equation}
\end{proposition}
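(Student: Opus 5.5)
The plan is to replace $(\mathcal Z_N^{\mathrm{sph}})^{-2}$ by an explicit spectral quantity that can be averaged exactly. Let $\lambda_1\ge\dots\ge\lambda_N$ be the eigenvalues of $W$. By orthogonal invariance of $\nu_N$,
\[
\mathcal Z_N^{\mathrm{sph}}=\int_{S^{N-1}}\exp\Bigl\{\frac N2\sum_i\lambda_i y_i^2\Bigr\}\,dy ,
\]
with $dy$ the normalized measure on the unit sphere. The one replica contour formula writes this as
\[
\mathcal Z_N^{\mathrm{sph}}=\frac{\Gamma(N/2)}{2\pi i}\Bigl(\frac N2\Bigr)^{1-N/2}\int_{\gamma-i\infty}^{\gamma+i\infty}e^{Nz/2}\prod_{i=1}^N(z-\lambda_i)^{-1/2}\,dz ,
\]
valid for any $\gamma>\lambda_1$.

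Since only a lower bound on the inverse moment is needed, I would only upper bound $\mathcal Z_N^{\mathrm{sph}}$. Take absolute values on the vertical line $z=\gamma+it$ and use $|1+it/(\gamma-\lambda_i)|^{-1/2}$. Keeping only the indices with $\gamma-\lambda_i\le 1$ in the $t$-integral should give
\[
\mathcal Z_N^{\mathrm{sph}}\le C\,\Gamma(N/2)\Bigl(\frac N2\Bigr)^{1-N/2}e^{N\gamma/2}\prod_i(\gamma-\lambda_i)^{-1/2}\,\Theta_N(\gamma).
\]
Here $\Theta_N(\gamma)$ is a polynomial correction controlled by how many eigenvalues lie within $N^{-2/3}$ of $\gamma$. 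I would restrict to the event $\mathcal E$ that $\lambda_1\le 2+N^{-2/3}$ and the spectrum is rigid near the edge, and choose $\gamma=\lambda_1+N^{-2/3}$ (or $\gamma=2+2N^{-2/3}$). On $\mathcal E$ the correction $\Theta_N$ is $O(N^{-1/3})$-sized up to constants. Squaring and inverting gives, on $\mathcal E$,
\[
(\mathcal Z_N^{\mathrm{sph}})^{-2}\ge c\,\frac{(N/2)^{N-2}}{\Gamma(N/2)^2}\,\Theta_N^{-2}\,e^{-N\gamma}\prod_i(\gamma-\lambda_i).
\]
Stirling turns the prefactor into $e^{N}$ times explicit powers of $N$.

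The key step is averaging $e^{-N\gamma}\prod_i(\gamma-\lambda_i)$ over $\mathcal E$. This is where the dimension shift identity enters. The GOE joint density is $\propto\prod_{i<j}|\lambda_i-\lambda_j|\,e^{-N\sum\lambda_i^2/4}$, so multiplying the $N$-dimensional law by $\prod_i|x-\lambda_i|e^{-Nx^2/4}$ gives, after rescaling $N\to N+1$, the joint density of an $(N+1)$-dimensional GOE with $x$ as one of its eigenvalues. Hence
\[
\E\Bigl[\prod_i|x-\lambda_i|\,\mathbf 1_{\{\lambda_1<x\}}\Bigr]=\frac{Z_{N+1}}{Z_N}\,e^{Nx^2/4}\,\rho^{\max}_{N+1}(x),
\]
where $\rho^{\max}_{N+1}$ is the (suitably rescaled) density of the top eigenvalue of the $(N+1)$-GOE and $Z_N$ is the Selberg normalizing constant. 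With $x=\gamma$ near $2$, the factor $e^{Nx^2/4}e^{-Nx}$ at $x=2$ gives $e^{-N}$, and $\frac{Z_{N+1}}{Z_N}$ computed by Selberg/Stirling supplies the compensating $e^{N/2}$ up to powers of $N$. The soft-edge (Tracy--Widom) bound $\rho^{\max}_{N+1}(x)\asymp N^{2/3}$ for $x\in[2,2+N^{-2/3}]$ gives the edge factor. To handle the random choice $\gamma=\lambda_1+N^{-2/3}$, I would instead integrate $\gamma$ over a deterministic window $[2,2+N^{-2/3}]$. By monotonicity in $\gamma$ of the pointwise bound (after adjusting $\Theta_N$), this converts the random evaluation point into an average of the identity above over $x$.

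The main obstacle will be the polynomial bookkeeping, i.e. confirming that the powers of $N$ from Stirling, from $Z_{N+1}/Z_N$, from $\rho^{\max}_{N+1}\sim N^{2/3}$, from the window width $N^{-2/3}$, and from $\Theta_N^{-2}$ combine to exactly $N^{1/2}$. The estimate on $\Theta_N$ is the delicate part. Near the edge the $t$-integral behaves like $\int\prod_{\text{edge }i}|1+it/(\gamma-\lambda_i)|^{-1/2}\,dt$, which is of order $N^{-2/3}$ when $O(1)$ eigenvalues lie within $N^{-2/3}$. Bounding it uniformly needs at least three or four eigenvalues within distance $O(N^{-2/3})$ of $\gamma$ (for integrability) and at most $O(1)$ of them closer than $N^{-2/3}/K$. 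I would build this into $\mathcal E$ and show that the weighted measure (the $(N+1)$-GOE law) assigns $\mathcal E$ probability bounded below, using standard edge rigidity for the $(N+1)$-GOE. As a consistency check on the target, $\log\mathcal Z_N^{\mathrm{sph}}-N/4$ has mean $\approx-\frac1{12}\log N$ and variance $\approx\frac16\log N$, and the heuristic $e^{-2\,\mathrm{mean}+2\,\mathrm{var}}=N^{1/2}$ matches the claim.
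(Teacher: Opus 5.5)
Your plan is essentially the paper's proof. It uses the same contour formula bounded in absolute value on a vertical line through a soft-edge point, and an event forcing three eigenvalues within $O(N^{-2/3})$ of that point (the paper uses $\{\lambda_1<\gamma_N,\ \lambda_3\ge b_N\}$). It also uses the same eigenvalue-insertion identity with the Mehta ratio, and a window of width $N^{-2/3}$ with positive mass under the $(N+1)$-point ensemble, from which the paper picks one deterministic $\gamma_N$ by pigeonhole rather than averaging over $\gamma$. Your extra condition limiting very close eigenvalues is unnecessary: small gaps only shrink the $t$-integral, and $D_N(\gamma)$ is averaged exactly.

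Two slips in your sketched bookkeeping need fixing. The $t$-integral is $O(N^{-2/3})$, not $O(N^{-1/3})$; with $N^{-1/3}$ you would only get $N^{-1/6}$. The Mehta ratio is of order $N^{-1/2}e^{-N/2}$, not $e^{+N/2}$. The correct factors are $N^{-1}e^{N}$ (Stirling), $N^{4/3}$ (inverse square of the $t$-integral), $N^{-1/2}e^{-N/2}$ (ratio), $N^{2/3}$ (averaged edge density $p_0/|\mathcal I_N|$, which is the only place the window width enters) and $e^{N\gamma^2/4-N\gamma}\ge e^{-N}$. These multiply to exactly $N^{1/2}e^{-N/2}$.
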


The proof of Proposition~\ref{prop:lower-spherical-negative-moment} is divided into several steps. We begin with a contour formula. Write the eigenvalues of $W$ in decreasing order as $\lambda_1>\lambda_2>\cdots>\lambda_N.$  The inequalities are strict almost surely. For $\gamma>\lambda_1$, set
\[
D_N(\gamma)=\det(\gamma I-W)=\prod_{j=1}^N(\gamma-\lambda_j).
\]

\begin{lemma}\label{lem:lower-spherical-contour}
For every real $\gamma>\lambda_1$,
\begin{equation}\label{eq:lower-spherical-contour}
\mathcal Z_N^{\mathrm{sph}}
=
\frac{\Gamma(N/2)}{2\pi i\,(N/2)^{N/2-1}}
\int_{\gamma-i\infty}^{\gamma+i\infty}
 e^{Nz/2}\det(zI-W)^{-1/2}\,dz.
\end{equation}
For each factor $(z-\lambda_j)^{-1/2}$, the branch is chosen to be positive for real $z>\lambda_1$ and analytic on $\mathbb C\setminus(-\infty,\lambda_j]$.
\end{lemma}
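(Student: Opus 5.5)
The plan is to diagonalize $W$, reduce the spherical integral to a one-dimensional Laplace transform of the sphere's Gaussian-integral representation, and then invert. Write $W=O^{\mathsf T}\Lambda O$ with $\Lambda=\operatorname{diag}(\lambda_1,\dots,\lambda_N)$. By rotation invariance of $\nu_N$ we have
\[
\mathcal Z_N^{\mathrm{sph}}=\int_{\mathbb S_N}\exp\Bigl\{\tfrac12\sum_j\lambda_jx_j^2\Bigr\}\,d\nu_N(x).
\]

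The key device is to compare with a Gaussian integral over $\R^N$ using polar coordinates. For real $z>\lambda_1$, Fubini and the one-dimensional Gaussian integral give
\[
\int_{\R^N}e^{\frac12 x^{\mathsf T}Wx-\frac z2\|x\|^2}\,dx=(2\pi)^{N/2}\det(zI-W)^{-1/2}.
\]
Writing $x=r\omega$ with $\omega\in\mathbb S_N$ and $\|x\|=r\sqrt{1}$, and setting $f(t)=\int_{\mathbb S_N}e^{\frac t{2N}x^{\mathsf T}Wx}\,d\nu_N(x)$ with $t=\|x\|^2$, the left-hand side becomes
\[
|S^{N-1}|\int_0^\infty \rho^{N-1}e^{-z\rho^2/2}f(\rho^2)\,d\rho=\tfrac12|S^{N-1}|\int_0^\infty t^{N/2-1}e^{-zt/2}f(t)\,dt .
\]
So $\det(zI-W)^{-1/2}$, up to the explicit constant $|S^{N-1}|/(2(2\pi)^{N/2})=1/(2^{N/2}\Gamma(N/2))$, is the Laplace transform in the variable $z/2$ of $t^{N/2-1}f(t)$.

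Next I would invert this Laplace transform with the Bromwich formula along the vertical line $\operatorname{Re}z=\gamma>\lambda_1$ and evaluate at $t=N$, where $f(N)=\mathcal Z_N^{\mathrm{sph}}$. This gives
\[
N^{N/2-1}f(N)=\frac{2^{N/2}\Gamma(N/2)}{2\pi i}\int_{\gamma-i\infty}^{\gamma+i\infty}e^{Nz/2}\det(zI-W)^{-1/2}\,\frac{dz}{2}.
\]
Simplifying the constants, $2^{N/2}/(2N^{N/2-1})=(N/2)^{-(N/2-1)}$, which yields \eqref{eq:lower-spherical-contour}.

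Two points need justification along the way.

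\textbf{Analytic continuation.} Both sides are analytic in $\operatorname{Re}z>\lambda_1$. The stated branch of $\prod_j(z-\lambda_j)^{-1/2}$ is the analytic continuation from the real axis, so the identity extends to the whole half-plane.

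\textbf{Validity of the inversion.} This is the main obstacle. The function $t\mapsto t^{N/2-1}f(t)$ is continuous, and $f$ is locally smooth in $t$ (indeed entire), so pointwise Bromwich inversion holds at $t=N$. For $N\ge3$ the line integral converges absolutely, since $|\det(zI-W)^{-1/2}|\sim|z|^{-N/2}$. For $N\le2$ I would interpret it as a principal value, or simply restrict to $N\ge3$, which suffices for Proposition~\ref{prop:lower-spherical-negative-moment}.

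Finally, independence from $\gamma$ follows from Cauchy's theorem: the integrand is analytic to the right of $\lambda_1$ and decays as $|\operatorname{Im}z|\to\infty$, so the horizontal connecting segments contribute nothing.
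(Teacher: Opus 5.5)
Your proposal is correct and is essentially the paper's proof: polar coordinates turn $(2\pi)^{N/2}\det(zI-W)^{-1/2}$ into the Laplace transform of $t^{N/2-1}f(t)$ (the paper's $t^{N/2-1}J_W(t)$), and Bromwich inversion at $t=N$ followed by the substitution $z=2s$ gives exactly the constant $\Gamma(N/2)/(N/2)^{N/2-1}$. Your extra remarks make explicit two points the paper leaves implicit: continuing the Laplace identity from real $z$ to the line $\operatorname{Re}z=\gamma$, and absolute convergence only for $N\ge3$ (principal value otherwise). Just fix the garbled polar step, which should read $x=\rho u$ with $u$ on the unit sphere and $\rho=\|x\|$; this is what your subsequent formulas actually use.
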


\begin{proof}
This contour representation is the one used in the analysis of the spherical SK model; see, for example, \cite{BaikLee2016}. We include the normalization argument. Let $d\omega$ be surface measure on the unit sphere $S^{N-1}$, let $|S^{N-1}|=2\pi^{N/2}/\Gamma(N/2)$, and let $d\bar\omega=d\omega/|S^{N-1}|$. For $r>0$, define
\[
J_W(r)=\int_{S^{N-1}}e^{r u^{\mathsf T}Wu/2}\,d\bar\omega(u).
\]
Then $J_W(N)=\mathcal Z_N^{\mathrm{sph}}$. If $z>\lambda_1$ is real, the Gaussian integral and polar coordinates give
\begin{align*}
(2\pi)^{N/2}\det(zI-W)^{-1/2}
&=\int_{\mathbb R^N}e^{-x^{\mathsf T}(zI-W)x/2}\,dx\\
&=\frac{|S^{N-1}|}{2}\int_0^\infty
 r^{N/2-1}e^{-zr/2}J_W(r)\,dr.
\end{align*}
Since $|S^{N-1}|/2=\pi^{N/2}/\Gamma(N/2)$, then we have
\begin{equation}\label{eq:lower-laplace-transform}
\int_0^\infty r^{N/2-1}e^{-zr/2}J_W(r)\,dr
=2^{N/2}\Gamma(N/2)\det(zI-W)^{-1/2}.
\end{equation}
The function $r\mapsto r^{N/2-1}J_W(r)$ is continuous on $(0,\infty)$, locally integrable at zero, and of exponential order. Hence the Bromwich inversion theorem applies at $r=N$. Applying it to \eqref{eq:lower-laplace-transform} and then changing variables from the Laplace variable $s$ to $z=2s$ gives
\begin{align*}
N^{N/2-1}J_W(N)
&=\frac{2^{N/2}\Gamma(N/2)}{2\pi i}
  \int_{\gamma/2-i\infty}^{\gamma/2+i\infty}
  e^{Ns}\det(2sI-W)^{-1/2}\,ds\\
&=\frac{2^{N/2-1}\Gamma(N/2)}{2\pi i}
  \int_{\gamma-i\infty}^{\gamma+i\infty}
  e^{Nz/2}\det(zI-W)^{-1/2}\,dz.
\end{align*}
Dividing by $N^{N/2-1}$ proves \eqref{eq:lower-spherical-contour}.
\end{proof}

We next choose a contour point at the GOE soft edge. For $n\ge1$ and $a>0$, define the Mehta integral
\begin{equation}\label{eq:lower-mehta-definition}
\mathfrak C_{n,a}
=\int_{\mathbb R^n}
 e^{-a\sum_{j=1}^n x_j^2/4}
 \prod_{1\le i<j\le n}|x_i-x_j|\,dx_1\cdots dx_n.
\end{equation}
Let $\mathbb Q_N$ be the probability law on the ordered region
$\{\mu_1>\cdots>\mu_{N+1}\}$ with density
\begin{equation}\label{eq:lower-Nplusone-density}
\frac{(N+1)!}{\mathfrak C_{N+1,N}}
 e^{-N\sum_{j=1}^{N+1}\mu_j^2/4}
 \prod_{1\le i<j\le N+1}(\mu_i-\mu_j).
\end{equation}
This is the ordered eigenvalue law of $\sqrt{(N+1)/N}$ times a standard $\mathrm{GOE}(N+1)$ matrix.

\begin{lemma}\label{lem:lower-edge-point}
There are constants $q_-<q_+$, $L>-q_-$, $c_0>0$, and, for every sufficiently large $N$, a deterministic point $\gamma_N\in \mathcal{I}_N
=\bigl(2+q_-N^{-2/3},2+q_+N^{-2/3}\bigr)$ such that, with $b_N=2-LN^{-2/3}$,
\begin{equation}\label{eq:lower-edge-density}
r_N(\gamma_N)\ge c_0N^{2/3}.
\end{equation}
Here $r_N$ is the density of the finite measure $\mathbb Q_N(\mu_1\in dx,\ \mu_4\ge b_N)=r_N(x)\,dx.$ Moreover,
\begin{equation}\label{eq:lower-edge-distance}
0<\gamma_N-b_N\le(q_++L)N^{-2/3}.
\end{equation}
\end{lemma}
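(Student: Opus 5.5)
The plan is to pass to the soft-edge scaling limit of $\mathbb Q_N$ and read off positivity of a limiting density. Under $\mathbb Q_N$ the vector $(\mu_1,\dots,\mu_{N+1})$ is the ordered spectrum of $\sqrt{(N+1)/N}\,\mathrm{GOE}(N+1)$ with semicircle edge at $2+O(N^{-1})$. The top eigenvalues therefore satisfy the Tracy--Widom/Airy$_1$ edge asymptotics. Concretely, set $\xi_k=N^{2/3}(\mu_k-2)$. The first-order statistics $(\xi_1,\dots,\xi_4)$ converge jointly in distribution to the top four points $(a_1,\dots,a_4)$ of the Airy$_1$ point process. The $O(N^{-1})$ rescaling only shifts the edge by $o(N^{-2/3})$, so it does not affect this limit.

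\textbf{Step 1: reduce to a local density statement for $\xi_1$.} Write $\widetilde r_N(y)$ for the density of $\mathbb Q_N(\xi_1\in dy,\ \xi_4\ge -L)$. Then $r_N(x)=N^{2/3}\widetilde r_N(N^{2/3}(x-2))$. It therefore suffices to find $q_-<q_+$, $L>-q_-$ and $c_0>0$ with
\[
\int_{q_-}^{q_+}\widetilde r_N(y)\,dy\ \ge\ c_0(q_+-q_-)
\]
for all large $N$. Granting this, some point of $(q_-,q_+)$ has $\widetilde r_N\ge c_0$, and a measurable choice, for instance the point nearest a fixed reference, makes $\gamma_N$ deterministic. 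This averaging step avoids any need for pointwise local limit theorems.

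\textbf{Step 2: positivity of the limiting mass.} The integral in Step 1 equals $\mathbb Q_N(\xi_1\in(q_-,q_+),\ \xi_4\ge -L)$. By weak convergence and the continuity of the Airy$_1$ joint law, this converges to $\Pbb(a_1\in(q_-,q_+),\ a_4\ge -L)$. I would fix any $q_-<q_+$, say $q_\pm=\pm1$, and then take $L$ large. Since $\Pbb(a_4\ge -L)\to1$ as $L\to\infty$ and $\Pbb(a_1\in(-1,1))>0$ (Tracy--Widom$_1$ has an everywhere positive density), the limit is bounded below by $\tfrac12\Pbb(a_1\in(-1,1))>0$ for $L$ large. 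This also guarantees $L>-q_-$. Taking $c_0$ to be half of this limit divided by $(q_+-q_-)$ gives Step 1 for $N\ge N_0$.

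\textbf{Step 3: the distance bound.} The bound \eqref{eq:lower-edge-distance} is immediate. Since $\gamma_N>2+q_-N^{-2/3}$ and $L>-q_-$, we get $\gamma_N-b_N>(q_-+L)N^{-2/3}>0$. Since $\gamma_N<2+q_+N^{-2/3}$, we get $\gamma_N-b_N\le(q_++L)N^{-2/3}$.

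The main obstacle is justifying the joint edge convergence of the first four eigenvalues for $\mathbb Q_N$, including the small variance mismatch, and making sure the convergence holds with uniform constants. I would cite the standard GOE edge universality (Tracy--Widom, Forrester's Pfaffian correlation kernels, or Soshnikov) and absorb the $\sqrt{(N+1)/N}$ factor by a deterministic shift of size $O(N^{-1/3})$ in the $\xi$ scale. The averaging argument of Step 1 then avoids any pointwise density convergence, which is the delicate part one would otherwise face.
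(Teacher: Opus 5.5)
Your proposal is correct and follows essentially the same route as the paper. Both arguments use joint soft-edge convergence of the top four eigenvalues under $\mathbb Q_N$, with the $\sqrt{(N+1)/N}$ factor negligible at scale $N^{-2/3}$, choose $q_\pm$ and $L$ so that the limiting event $\{a_1\in(q_-,q_+),\ a_4\ge -L\}$ has positive probability, average $r_N$ over $\mathcal I_N$ (of length of order $N^{-2/3}$) to produce $\gamma_N$, and read the distance bound off from $L>-q_-$. The differences are cosmetic: you cite Airy$_1$/Pfaffian-kernel edge results instead of Ram\'irez--Rider--Vir\'ag, and you make the explicit choice $q_\pm=\pm1$ with $L$ large.
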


\begin{proof}
The joint soft edge convergence for the Gaussian beta ensembles implies that the first four ordered eigenvalues under $\mathbb Q_N$, centered at $2$ and scaled by $N^{2/3}$, converge jointly to the first four points of the GOE stochastic Airy spectrum, up to a fixed positive deterministic scaling constant; see \cite{RamirezRiderVirag2011}. The factor $\sqrt{(N+1)/N}=1+O(N^{-1})$ changes the edge by $O(N^{-1})$, which is smaller than the $N^{-2/3}$ scale and therefore does not affect the conclusion.

Choose finite numbers $q_-<q_+$ and $L>-q_-$, at continuity points of the limiting joint distribution, so that the limiting event corresponding to $\mu_1\in \mathcal{I}_N,$ and $\mu_4\ge b_N$ has probability $2p_0>0$. Joint convergence then gives, for all sufficiently large $N$,
\begin{equation}\label{eq:lower-edge-positive-mass}
\mathbb Q_N(\mu_1\in \mathcal{I}_N,\ \mu_4\ge b_N)\ge p_0.
\end{equation}
The density $r_N$ can be written  by integrating \eqref{eq:lower-Nplusone-density} over $\mu_2,\ldots,\mu_{N+1}$, and hence
\[
\int_{I_N}r_N(x)\,dx
=\mathbb Q_N(\mu_1\in I_N,\ \mu_4\ge b_N).
\]
Since $|\mathcal{I}_N|=(q_+-q_-)N^{-2/3}$, \eqref{eq:lower-edge-positive-mass} implies that there is a deterministic $\gamma_N\in \mathcal{I}_N$ for which
\[
r_N(\gamma_N)
\ge\frac{p_0}{|\mathcal{I}_N|}
=\frac{p_0}{q_+-q_-}N^{2/3}.
\]
This proves \eqref{eq:lower-edge-density}. Finally, $L>-q_-$ implies $b_N<\inf \mathcal{I}_N$, and $\gamma_N\le2+q_+N^{-2/3}$ gives \eqref{eq:lower-edge-distance}.
\end{proof}

For the $N\times N$ matrix $W$, define the event
\begin{equation}\label{eq:lower-edge-event}
\mathcal A_N=\{\lambda_1<\gamma_N,\ \lambda_3\ge b_N\}.
\end{equation}
On $\mathcal A_N$, put $a_j=\gamma_N-\lambda_j$. Then $a_j>0$ for every $j$, and for $j=1,2,3$,
\begin{equation}\label{eq:lower-first-three-gaps}
a_j\le\gamma_N-b_N\le(q_++L)N^{-2/3}.
\end{equation}
Parametrize the contour in \eqref{eq:lower-spherical-contour} by $z=\gamma_N+it$. Taking absolute values gives
\begin{align}
\mathcal Z_N^{\mathrm{sph}}
&\le c_N e^{N\gamma_N/2}D_N(\gamma_N)^{-1/2}
\int_{\mathbb R}\prod_{j=1}^N
\left(1+\frac{t^2}{a_j^2}\right)^{-1/4}\,dt,
\label{eq:lower-contour-absolute}
\end{align}
where $c_N=\frac{\Gamma(N/2)}{2\pi(N/2)^{N/2-1}}.$ Indeed,
\[
|\gamma_N+it-\lambda_j|^{-1/2}
=a_j^{-1/2}\left(1+\frac{t^2}{a_j^2}\right)^{-1/4}.
\]
Stirling's formula yields
\begin{equation}\label{eq:lower-contour-prefactor}
c_N=\frac{\sqrt N}{2\sqrt\pi}e^{-N/2}\bigl(1+O(N^{-1})\bigr),
\end{equation}
and in particular $c_N\le C\sqrt N e^{-N/2}$.

Let $r=(q_++L)N^{-2/3}$. By \eqref{eq:lower-first-three-gaps}, for every $t\in\mathbb R$ and on $\mathcal A_N$,
\begin{align*}
\prod_{j=1}^N
\left(1+\frac{t^2}{a_j^2}\right)^{-1/4}
&\le
\prod_{j=1}^3
\left(1+\frac{t^2}{a_j^2}\right)^{-1/4}\\
&\le
\left(1+\frac{t^2}{r^2}\right)^{-3/4}.
\end{align*}
The exponent $3/4$ is larger than $1/2$, so the resulting integral is finite, and
\begin{equation}\label{eq:lower-contour-integral-bound}
\int_{\mathbb R}\prod_{j=1}^N
\left(1+\frac{t^2}{a_j^2}\right)^{-1/4}\,dt
\le
r\int_{\mathbb R}(1+u^2)^{-3/4}\,du
\le CN^{-2/3}.
\end{equation}
Combining \eqref{eq:lower-contour-absolute}, \eqref{eq:lower-contour-prefactor}, and \eqref{eq:lower-contour-integral-bound}, we obtain on $\mathcal A_N$
\begin{equation}\label{eq:lower-spherical-upper-on-event}
\mathcal Z_N^{\mathrm{sph}}
\le
CN^{-1/6}e^{-N/2+N\gamma_N/2}D_N(\gamma_N)^{-1/2}.
\end{equation}
Therefore,
\begin{equation}\label{eq:lower-spherical-inverse-on-event}
\bigl(\mathcal Z_N^{\mathrm{sph}}\bigr)^{-2}
\ge
cN^{1/3}e^{N-N\gamma_N}D_N(\gamma_N)\,\1_{\mathcal A_N}.
\end{equation}

The determinant in \eqref{eq:lower-spherical-inverse-on-event} can be absorbed into the Vandermonde factor by adding one eigenvalue.

\begin{lemma}\label{lem:lower-eigenvalue-insertion}
For every $x>b_N$,
\begin{equation}\label{eq:lower-eigenvalue-insertion}
\E\left[D_N(x)\1_{\{\lambda_1<x,\ \lambda_3\ge b_N\}}\right]
=
\frac{\mathfrak C_{N+1,N}}{(N+1)\mathfrak C_{N,N}}
 e^{Nx^2/4}r_N(x).
\end{equation}
\end{lemma}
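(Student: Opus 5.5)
The identity follows from writing both sides as integrals against the joint eigenvalue density of $W$. The GOE matrix $W$ here has off-diagonal variance $N^{-1}$ and diagonal variance $2N^{-1}$. Its density is therefore proportional to $e^{-N\operatorname{Tr}W^2/4}$, and the unordered eigenvalue density is
\[
\frac{1}{\mathfrak C_{N,N}}\, e^{-N\sum_{j=1}^N \lambda_j^2/4}\prod_{i<j}|\lambda_i-\lambda_j|.
\]
Passing to the ordered region $\{\lambda_1>\cdots>\lambda_N\}$ multiplies this by $N!$. The first step is to check this normalization against the definition \eqref{eq:lower-mehta-definition} with $n=N$ and $a=N$.

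\textbf{Rewriting the left-hand side.} Fix $x>b_N$ and write
\[
\E\bigl[D_N(x)\1_{\{\lambda_1<x,\ \lambda_3\ge b_N\}}\bigr]=\frac{N!}{\mathfrak C_{N,N}}\int_{x>\lambda_1>\cdots>\lambda_N,\ \lambda_3\ge b_N}\prod_{j=1}^N(x-\lambda_j)\,e^{-N\sum_j\lambda_j^2/4}\prod_{i<j}(\lambda_i-\lambda_j)\,d\lambda .
\]
Now relabel $\mu_1=x$ and $\mu_{j+1}=\lambda_j$ for $1\le j\le N$. The product $\prod_j(x-\lambda_j)\prod_{i<j}(\lambda_i-\lambda_j)$ is exactly the Vandermonde $\prod_{1\le i<j\le N+1}(\mu_i-\mu_j)$ on the ordered region $\mu_1>\mu_2>\cdots>\mu_{N+1}$. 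The Gaussian weight satisfies
\[
e^{-N\sum_{j\le N}\lambda_j^2/4}=e^{Nx^2/4}\,e^{-N\sum_{j=1}^{N+1}\mu_j^2/4}.
\]
Under this relabeling the constraints become: $\lambda_1<x$ is the ordering $\mu_1>\mu_2$, and $\lambda_3\ge b_N$ becomes $\mu_4\ge b_N$.

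\textbf{Identifying $r_N$.} Comparing with \eqref{eq:lower-Nplusone-density}, the remaining integral over $\mu_2,\ldots,\mu_{N+1}$ with $\mu_1=x$ fixed equals $\frac{\mathfrak C_{N+1,N}}{(N+1)!}$ times the density of $\mathbb Q_N(\mu_1\in dx,\ \mu_4\ge b_N)$ at $x$. By Fubini, and by definition of $r_N$ in Lemma~\ref{lem:lower-edge-point}, this density is $r_N(x)$. Collecting constants gives
\[
\frac{N!}{\mathfrak C_{N,N}}\cdot\frac{\mathfrak C_{N+1,N}}{(N+1)!}\,e^{Nx^2/4}\,r_N(x)=\frac{\mathfrak C_{N+1,N}}{(N+1)\,\mathfrak C_{N,N}}\,e^{Nx^2/4}\,r_N(x),
\]
which is the claimed identity \eqref{eq:lower-eigenvalue-insertion}.

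\textbf{Main obstacle: constants.} The only delicate point is bookkeeping of the normalizations. I need to confirm two things. First, the GOE eigenvalue law with this variance scaling is indeed $e^{-N\sum\lambda^2/4}|\Delta|$, i.e. that $\operatorname{Tr}W^2$ enters with coefficient $N/4$. Second, both $\mathfrak C_{N,N}$ and $\mathfrak C_{N+1,N}$ use the same parameter $a=N$. This is what makes the Gaussian weight split cleanly into $e^{Nx^2/4}$ times the $(N+1)$-point weight, with no stray factor.
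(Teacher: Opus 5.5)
Your proposal is correct and follows the paper's proof essentially line for line. You write the expectation against the ordered density $\frac{N!}{\mathfrak C_{N,N}}e^{-N\sum_j\lambda_j^2/4}\prod_{i<j}(\lambda_i-\lambda_j)$, absorb $D_N(x)$ into the $(N+1)$-point Vandermonde with $\mu_1=x$, and match the result against $r_N(x)$ obtained by fixing $\mu_1=x$ in \eqref{eq:lower-Nplusone-density}, using $N!/(N+1)!=1/(N+1)$. Both normalization points you flag do check out: the entry variances $N^{-1}$ and $2N^{-1}$ give a density proportional to $e^{-N\operatorname{Tr}W^2/4}$, and both Mehta constants use $a=N$.
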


\begin{proof}
On the ordered region $\lambda_1>\cdots>\lambda_N$, the eigenvalue density of $W$ is
\[
\frac{N!}{\mathfrak C_{N,N}}
 e^{-N\sum_{j=1}^N\lambda_j^2/4}
 \prod_{1\le i<j\le N}(\lambda_i-\lambda_j).
\]
For brevity, write $d\lambda=d\lambda_1\cdots d\lambda_N$. If $x>\lambda_1$, then
\begin{align*}
D_N(x)\prod_{1\le i<j\le N}(\lambda_i-\lambda_j)
&=\prod_{j=1}^N(x-\lambda_j)
  \prod_{1\le i<j\le N}(\lambda_i-\lambda_j)\\
&=\prod_{1\le i<j\le N+1}(\mu_i-\mu_j),
\end{align*}
where $\mu_1=x$ and $\mu_{j+1}=\lambda_j$ for $1\le j\le N$. Hence
\begin{align}
&\E\left[D_N(x)\1_{\{\lambda_1<x,\ \lambda_3\ge b_N\}}\right]\notag\\
&\quad=
\frac{N!}{\mathfrak C_{N,N}}
\int_{\substack{x>\lambda_1>\cdots>\lambda_N\\ \lambda_3\ge b_N}}
 e^{-N\sum_{j=1}^N\lambda_j^2/4}
 \prod_{j=1}^N(x-\lambda_j)\notag\\
&\qquad\quad\times
 \prod_{1\le i<j\le N}(\lambda_i-\lambda_j)\,d\lambda.
\label{eq:lower-insertion-left-integral}
\end{align}
On the other hand, fixing the largest coordinate $\mu_1=x$ in the ordered density \eqref{eq:lower-Nplusone-density} gives
\begin{align}
r_N(x)
&=
\frac{(N+1)!}{\mathfrak C_{N+1,N}}e^{-Nx^2/4}
\int_{\substack{x>\lambda_1>\cdots>\lambda_N\\ \lambda_3\ge b_N}}
 e^{-N\sum_{j=1}^N\lambda_j^2/4}
 \prod_{j=1}^N(x-\lambda_j)\notag\\
&\qquad\quad\times
 \prod_{1\le i<j\le N}(\lambda_i-\lambda_j)\,d\lambda.
\label{eq:lower-insertion-right-integral}
\end{align}
The condition $\mu_4\ge b_N$ becomes $\lambda_3\ge b_N$. Comparing \eqref{eq:lower-insertion-left-integral} and \eqref{eq:lower-insertion-right-integral}, and using $N!/(N+1)!=1/(N+1)$, proves \eqref{eq:lower-eigenvalue-insertion}.
\end{proof}

We now consider the normalizing constant ratio. The Mehta integral formula \cite{Forrester2010} states that
\begin{equation}\label{eq:lower-mehta-formula}
\mathfrak C_{n,a}
=
\left(\frac2a\right)^{n(n+1)/4}
(2\pi)^{n/2}
\prod_{j=1}^n\frac{\Gamma(1+j/2)}{\Gamma(3/2)}.
\end{equation}
Applying \eqref{eq:lower-mehta-formula} with $(n,a)=(N+1,N)$ and $(N,N)$ gives
\begin{equation}\label{eq:lower-mehta-ratio-exact}
\frac{\mathfrak C_{N+1,N}}{(N+1)\mathfrak C_{N,N}}
=
\frac{\sqrt{2\pi}}{N+1}
\left(\frac2N\right)^{(N+1)/2}
\frac{\Gamma((N+3)/2)}{\Gamma(3/2)}.
\end{equation}
Set $m=(N+1)/2$. Stirling's formula gives
\[
\Gamma(m+1)=\sqrt{2\pi m}\left(\frac me\right)^m
\bigl(1+O(N^{-1})\bigr).
\]
Moreover,
\[
\left(\frac2N\right)^m m^m
=\left(1+\frac1N\right)^{(N+1)/2}
=e^{1/2}\bigl(1+O(N^{-1})\bigr)
\]
and $e^{-m}=e^{-N/2-1/2}$. Since $\Gamma(3/2)=\sqrt\pi/2$, substitution into \eqref{eq:lower-mehta-ratio-exact} yields
\begin{equation}\label{eq:lower-mehta-ratio-asymptotic}
\frac{\mathfrak C_{N+1,N}}{(N+1)\mathfrak C_{N,N}}
=\sqrt{8\pi}\,N^{-1/2}e^{-N/2}
\bigl(1+O(N^{-1})\bigr).
\end{equation}
In particular, the ratio in \eqref{eq:lower-mehta-ratio-asymptotic} is at least $cN^{-1/2}e^{-N/2}$ for all sufficiently large $N$.

We can now finish the proof of the spherical negative moment lower bound.

\begin{proof}[Proof of Proposition~\ref{prop:lower-spherical-negative-moment}]
Taking expectation in \eqref{eq:lower-spherical-inverse-on-event} gives
\begin{equation}\label{eq:lower-spherical-moment-before-insertion}
\E\bigl(\mathcal Z_N^{\mathrm{sph}}\bigr)^{-2}
\ge
cN^{1/3}e^{N-N\gamma_N}
\E\left[D_N(\gamma_N)\1_{\mathcal A_N}\right].
\end{equation}
By Lemma~\ref{lem:lower-eigenvalue-insertion}, \eqref{eq:lower-edge-density}, and \eqref{eq:lower-mehta-ratio-asymptotic},
\begin{align}
\E\left[D_N(\gamma_N)\1_{\mathcal A_N}\right]
&=
\frac{\mathfrak C_{N+1,N}}{(N+1)\mathfrak C_{N,N}}
 e^{N\gamma_N^2/4}r_N(\gamma_N)\notag\\
&\ge
cN^{-1/2}e^{-N/2}
 e^{N\gamma_N^2/4}N^{2/3}\\
&=
 cN^{1/6}e^{-N/2+N\gamma_N^2/4}.
\label{eq:lower-determinant-expectation}
\end{align}
Substituting \eqref{eq:lower-determinant-expectation} into \eqref{eq:lower-spherical-moment-before-insertion}, we obtain
\begin{align*}
\E\bigl(\mathcal Z_N^{\mathrm{sph}}\bigr)^{-2}
&\ge
cN^{1/2}
\exp\left\{
N-N\gamma_N-\frac N2+\frac{N\gamma_N^2}{4}
\right\}\\
&=
 cN^{1/2}
\exp\left\{-\frac N2+\frac N4(\gamma_N-2)^2\right\}\\
&\ge cN^{1/2}e^{-N/2}.
\end{align*}
\end{proof}

\begin{proof}[Proof of Theorem~\ref{thm:critical-lower-bound}]
By Lemma~\ref{lem:lower-haar-jensen} and Proposition~\ref{prop:lower-spherical-negative-moment},
\begin{equation}\label{eq:lower-cube-negative-moment}
\E\mathcal Z_N^{-2}\ge cN^{1/2}e^{-N/2}
\end{equation}
for all sufficiently large $N$. Therefore,
\begin{equation}
\frac12\log\E\mathcal Z_N^{-2}
\ge
-\frac N4+\frac14\log N-C.
\label{eq:Z_N_bound}
\end{equation}
Combining \eqref{eq:Z_N_bound} with \eqref{eq:lower-convexity-reduction} and \eqref{eq:lower-entropy-input}, we find
\begin{align*}
\Var(\mathcal F_N)
&\ge
\frac N4-\frac N4+\frac14\log N-B_N(\beta_c)-C\\
&\ge
\left(\frac14-\frac1{12}\right)\log N-C\\
&=\frac16\log N-C.
\end{align*}
Equation~\eqref{eq:lower-full-to-goe} gives the same lower bound for $F_N(\beta_c)$. Increasing $C$ handles the finitely many values $2\le N<N_0$.
\end{proof}

\section*{Acknowledgements}
The author would like to express his sincere gratitude to Professor Jian Ding and Dr.\ Hang Du for valuable discussions and insightful suggestions, and to Zijun Chen for carefully checking the proofs at an early stage of this work.
 The author is supported by the National Natural Science Foundation of China (Grant Nos.\ 12595294 and 12231002) and the New Cornerstone Science Foundation (Grant No.\ NCI202501).
%

\bibliographystyle{plain}
\bibliography{ref}

@article{Asp2008,
  AUTHOR  = {Aspelmeier, Timo},
  TITLE   = {Free-Energy Fluctuations and Chaos in the Sherrington-Kirkpatrick Model},
  JOURNAL = {Phys. Rev. Lett.},
 FJOURNAL = {Physical Review Letters},
  VOLUME  = {100},
  NUMBER  = {11},
  PAGES   = {117205},
  YEAR    = {2008},
  DOI     = {10.1103/PhysRevLett.100.117205},
  URL      ={https://link.aps.org/doi/10.1103/PhysRevLett.100.117205}
}

@article {CL2019,
    AUTHOR = {Chen, Wei-Kuo and Lam, Wai-Kit},
     TITLE = {Order of fluctuations of the free energy in the {SK} model at
              critical temperature},
   JOURNAL = {ALEA Lat. Am. J. Probab. Math. Stat.},
  FJOURNAL = {ALEA. Latin American Journal of Probability and Mathematical
              Statistics},
    VOLUME = {16},
      YEAR = {2019},
    NUMBER = {1},
     PAGES = {809--816},
      DOI = {10.30757/alea.v16-29},
       URL = {https://doi.org/10.30757/alea.v16-29},
}

@article {Chen2024GaussianConvexity,
    AUTHOR = {Chen, Wei-Kuo},
     TITLE = {A {G}aussian convexity for logarithmic moment generating
              functions with applications in spin glasses},
   JOURNAL = {Ann. Inst. Henri Poincar\'e{} Probab. Stat.},
  FJOURNAL = {Annales de l'Institut Henri Poincar\'e{} Probabilit\'es et
              Statistiques},
    VOLUME = {62},
      YEAR = {2026},
    NUMBER = {1},
      DOI = {10.1214/24-aihp1527},
       URL = {https://doi.org/10.1214/24-aihp1527},
}

@article{DK2026,
  title={Fluctuations for the Sherrington--Kirkpatrick spin glass model near the critical temperature},
  author={Dey, Partha S and Kang, Taegu},
  journal={arXiv preprint arXiv:2603.05636},
  year={2026}
}

@article {DEZ2015,
    AUTHOR = {Ding, Jian and Eldan, Ronen and Zhai, Alex},
     TITLE = {On multiple peaks and moderate deviations for the supremum of
              a {G}aussian field},
   JOURNAL = {Ann. Probab.},
  FJOURNAL = {The Annals of Probability},
    VOLUME = {43},
      YEAR = {2015},
    NUMBER = {6},
     PAGES = {3468--3493},
       DOI = {10.1214/14-AOP963},
       URL = {https://doi.org/10.1214/14-AOP963},
}

@article {Ehr1983,
    AUTHOR = {Ehrhard, Antoine},
     TITLE = {Sym\'etrisation dans l'espace de {G}auss},
   JOURNAL = {Math. Scand.},
  FJOURNAL = {Mathematica Scandinavica},
    VOLUME = {53},
      YEAR = {1983},
    NUMBER = {2},
     PAGES = {281--301},
      ISSN = {0025-5521,1903-1807},
   MRCLASS = {60G15 (35K05 52A22 60B99)},
  MRNUMBER = {745081},
MRREVIEWER = {A.\ Badrikian},
       DOI = {10.7146/math.scand.a-12035},
       URL = {https://doi.org/10.7146/math.scand.a-12035},
}

@book{MPV1987,
  author    = {M{\'e}zard, Marc and Parisi, Giorgio and Virasoro, Miguel A.},
  title     = {Spin Glass Theory and Beyond: An Introduction to the Replica Method and Its Applications},
  series    = {World Scientific Lecture Notes in Physics},
  volume    = {9},
  publisher = {World Scientific Publishing Co., Inc.},
  address   = {Teaneck, NJ},
  year      = {1987}
}

@book{Pan2013,
  author    = {Panchenko, Dmitry},
  title     = {The Sherrington--Kirkpatrick Model},
  series    = {Springer Monographs in Mathematics},
  publisher = {Springer},
  address   = {New York, NY},
  year      = {2013}
}

@article{PR2009,
  AUTHOR  = {Parisi, Giorgio and Rizzo, Tommaso},
  TITLE   = {Phase diagram and large deviations in the free energy of mean-field spin-glasses},
  JOURNAL = {Phys. Rev. B},
  FJOURNAL = {Physical Review B},
  VOLUME  = {79},
  NUMBER  = {13},
  PAGES   = {134205},
  YEAR    = {2009},
  DOI    = {10.1103/PhysRevB.79.134205},
  URL     = {https://link.aps.org/doi/10.1103/PhysRevB.79.134205}
}

@article{SK1975,
  AUTHOR  = {Sherrington, David and Kirkpatrick, Scott},
  TITLE   = {Solvable Model of a Spin-Glass},
  JOURNAL = {Phys. Rev. Lett.},
 FJOURNAL = {Physical Review Letters},
  VOLUME  = {35},
  NUMBER  = {26},
  PAGES   = {1792--1796},
  YEAR    = {1975},
  DOI    = {10.1103/PhysRevLett.35.1792},
  URL    = {https://link.aps.org/doi/10.1103/PhysRevLett.35.1792}
}

@book{Tal2003,
  author    = {Talagrand, Michel},
  title     = {Spin Glasses: A Challenge for Mathematicians: Cavity and Mean Field Models},
  series    = {Ergebnisse der Mathematik und ihrer Grenzgebiete. 3. Folge. A Series of Modern Surveys in Mathematics},
  volume    = {46},
  publisher = {Springer},
  address   = {Berlin, Heidelberg},
  year      = {2003}
}

@book{Tal2011spinI,
  author    = {Talagrand, Michel},
  title     = {Mean Field Models for Spin Glasses. Volume I: Basic Examples},
  series    = {Ergebnisse der Mathematik und ihrer Grenzgebiete. 3. Folge. A Series of Modern Surveys in Mathematics},
  volume    = {54},
  publisher = {Springer},
  address   = {Berlin, Heidelberg},
  year      = {2011}
}

@book{Tal2011spinII,
  author    = {Talagrand, Michel},
  title     = {Mean Field Models for Spin Glasses. Volume II: Advanced Replica-Symmetry and Low Temperature},
  series    = {Ergebnisse der Mathematik und ihrer Grenzgebiete. 3. Folge. A Series of Modern Surveys in Mathematics},
  volume    = {55},
  publisher = {Springer},
    address   = {Berlin, Heidelberg},
  year      = {2011}
}

@book {VER2020,
    AUTHOR = {Vershynin, Roman},
     TITLE = {High-dimensional probability},
    SERIES = {Cambridge Series in Statistical and Probabilistic Mathematics},
    VOLUME = {47},
      NOTE = {An introduction with applications in data science,
              With a foreword by Sara van de Geer},
 PUBLISHER = {Cambridge University Press, Cambridge},
      YEAR = {2018},
     PAGES = {xiv+284},
       DOI = {10.1017/9781108231596},
       URL = {https://doi.org/10.1017/9781108231596},
}

@article{AbbeBoix2020,
AUTHOR  = {Abbe, Emmanuel and Boix-Adser{`a}, Enric},
TITLE   = {An Information-Percolation Bound for Spin Synchronization on General Graphs},
JOURNAL = {Ann. Appl. Probab.},
FJOURNAL = {The Annals of Applied Probability},
VOLUME  = {30},
NUMBER  = {3},
PAGES   = {1066--1090},
YEAR    = {2020},
DOI     = {10.1214/19-AAP1523},
URL     = {https://doi.org/10.1214/19-AAP1523}
}

@article{GuoShamaiVerdu2005,
AUTHOR  = {Guo, Dongning and Shamai, Shlomo and Verd{'u}, Sergio},
TITLE   = {Mutual Information and Minimum Mean-Square Error in Gaussian Channels},
JOURNAL = {IEEE Trans. Inform. Theory},
FJOURNAL = {IEEE Transactions on Information Theory},
VOLUME  = {51},
NUMBER  = {4},
PAGES   = {1261--1282},
YEAR    = {2005},
DOI     = {10.1109/TIT.2005.844072},
URL     = {https://doi.org/10.1109/TIT.2005.844072}
}

@article{JansonSpencer2007,
AUTHOR  = {Janson, Svante and Spencer, Joel},
TITLE   = {A Point Process Describing the Component Sizes in the Critical Window of the Random Graph Evolution},
JOURNAL = {Combin. Probab. Comput.},
FJOURNAL = {Combinatorics, Probability and Computing},
VOLUME  = {16},
NUMBER  = {4},
PAGES   = {631--658},
YEAR    = {2007},
DOI     = {10.1017/S0963548306008327},
URL     = {https://doi.org/10.1017/S0963548306008327}
}

@article{Schertzer2026,
AUTHOR  = {Schertzer, Adrien},
TITLE   = {The Order of Free Energy Fluctuations in the Critical {Sherrington--Kirkpatrick} Model Revisited},
JOURNAL = {arXiv preprint arXiv:2606.21360},
FJOURNAL = {arXiv preprint},
YEAR    = {2026},
EPRINT  = {2606.21360},
ARCHIVEPREFIX = {arXiv},
PRIMARYCLASS  = {math.PR},
URL     = {https://arxiv.org/abs/2606.21360}
}

@article{BaikLee2016,
  AUTHOR  = {Baik, Jinho and Lee, Ji Oon},
  TITLE   = {Fluctuations of the Free Energy of the Spherical Sherrington--Kirkpatrick Model},
  JOURNAL = {J. Stat. Phys.},
  FJOURNAL = {Journal of Statistical Physics},
  VOLUME  = {165},
  NUMBER  = {2},
  PAGES   = {185--224},
  YEAR    = {2016},
  DOI     = {10.1007/s10955-016-1610-0},
  URL     = {https://doi.org/10.1007/s10955-016-1610-0}
}

@article{RamirezRiderVirag2011,
  AUTHOR  = {Ram{\'i}rez, Jos{\'e} A. and Rider, Brian and Vir{\'a}g, B{\'a}lint},
  TITLE   = {Beta Ensembles, Stochastic Airy Spectrum, and a Diffusion},
  JOURNAL = {J. Amer. Math. Soc.},
  FJOURNAL = {Journal of the American Mathematical Society},
  VOLUME  = {24},
  NUMBER  = {4},
  PAGES   = {919--944},
  YEAR    = {2011},
  DOI     = {10.1090/S0894-0347-2011-00703-0},
  URL     = {https://doi.org/10.1090/S0894-0347-2011-00703-0}
}

@book{Forrester2010,
  AUTHOR    = {Forrester, Peter J.},
  TITLE     = {Log-Gases and Random Matrices},
  SERIES    = {London Mathematical Society Monographs Series},
  VOLUME    = {34},
  PUBLISHER = {Princeton University Press},
  ADDRESS   = {Princeton, NJ},
  YEAR      = {2010},
  DOI       = {10.1515/9781400835416},
  URL       = {https://doi.org/10.1515/9781400835416}
}

@article{DuHuang2026,
  AUTHOR  = {Du, Hang and Huang, Brice},
  TITLE   = {Fluctuations of the {Sherrington--Kirkpatrick} Free Energy at Critical Temperature},
  JOURNAL = {arXiv preprint arXiv:2607.02172},
  YEAR    = {2026},
  EPRINT  = {2607.02172},
  ARCHIVEPREFIX = {arXiv},
  PRIMARYCLASS  = {math.PR},
  URL     = {https://arxiv.org/abs/2607.02172}
}

@article{Borell2003,
  AUTHOR  = {Borell, Christer},
  TITLE   = {The Ehrhard Inequality},
  JOURNAL = {C. R. Math. Acad. Sci. Paris},
  FJOURNAL = {Comptes Rendus Math{\'e}matique. Acad{\'e}mie des Sciences. Paris},
  VOLUME  = {337},
  NUMBER  = {10},
  PAGES   = {663--666},
  YEAR    = {2003},
  DOI     = {10.1016/j.crma.2003.09.031},
  URL     = {https://doi.org/10.1016/j.crma.2003.09.031}
}

@article{PaourisValettas2018,
  AUTHOR  = {Paouris, Grigoris and Valettas, Petros},
  TITLE   = {A Gaussian Small Deviation Inequality for Convex Functions},
  JOURNAL = {Ann. Probab.},
  FJOURNAL = {The Annals of Probability},
  VOLUME  = {46},
  NUMBER  = {3},
  PAGES   = {1441--1454},
  YEAR    = {2018},
  DOI     = {10.1214/17-AOP1206},
  URL     = {https://doi.org/10.1214/17-AOP1206}
}

@article{Panchenko2014,
  AUTHOR  = {Panchenko, Dmitry},
  TITLE   = {The Parisi Formula for Mixed \(p\)-Spin Models},
  JOURNAL = {Ann. Probab.},
  FJOURNAL = {The Annals of Probability},
  VOLUME  = {42},
  NUMBER  = {3},
  PAGES   = {946--958},
  YEAR    = {2014},
  DOI     = {10.1214/12-AOP800},
  URL     = {https://doi.org/10.1214/12-AOP800}
}

@article{CarmonaHu2002,
  AUTHOR  = {Carmona, Philippe and Hu, Yueyun},
  TITLE   = {On the Partition Function of a Directed Polymer in a Gaussian Random Environment},
  JOURNAL = {Probab. Theory Related Fields},
  FJOURNAL = {Probability Theory and Related Fields},
  VOLUME  = {124},
  NUMBER  = {3},
  PAGES   = {431--457},
  YEAR    = {2002},
  DOI     = {10.1007/s004400200213},
  URL     = {https://doi.org/10.1007/s004400200213}
}

\end{document}